\newtheorem{theorem}{Theorem}
\newtheorem{lemma}{Lemma}[section]
\newtheorem{proposition}[lemma]{Proposition}
\theoremstyle{definition}
\numberwithin{equation}{section}
\newcommand{\beq}{\begin{equation}}
\newcommand{\eeq}{\end{equation}}
\newcommand{\RE}{\mathbb R}
\newcommand{\R}{\mathbb R}
\newcommand{\CO}{\mathbb C}
\newcommand{\N}{\mathbb N}
\newcommand{\E}{\mathbb E}
\newcommand{\PP}{\mathbb P}
\newcommand{\Tr}{\operatorname{Tr}}
\newcommand{\ve}{\varepsilon}
\newcommand{\al}{\alpha}
\newcommand{\T}{\mathbb{T}}
\renewcommand{\Im}{\operatorname{Im}\,}
\renewcommand{\Re}{\operatorname{Re}\,}
\newcommand{\xx}{{\bf x}}
\newcommand{\J}{\mathbb J}
\title[]{Local Marchenko-Pastur law at the hard edge of the sample covariance ensemble}
\author[]{Anastasis Kafetzopoulos and Anna Maltsev}
\date{\today}
\begin{document}

\maketitle

\begin{abstract}
Consider an $N$ by $N$ matrix $X$ of complex entries with iid real and imaginary parts. We show that the local density of eigenvalues of $X^*X$ converges to the Marchenko-Pastur law on the optimal scale with probability 1. We also obtain rigidity of the eigenvalues in the bulk and near both hard and soft edges. Here we avoid logarithmic and polynomial corrections by working directly with high powers of expectation of the Stieltjes transforms. We work under the assumption that the entries have a finite 4th moment and are truncated at $N^{1/4}$. In this work we simplify and adapt the methods from prior papers of G\"otze-Tikhomirov and Cacciapuoti-Maltsev-Schlein to covariance matrices.
\end{abstract}

\section{Introduction}
In this paper we obtain optimal large deviation bounds on the Stieltjes transform for the sample covariance random matrix ensemble. Let $X$ be a $ M\times N$ matrix with components $x_{ij} = \Re x_{ij} + i \Im x_{ij}$. Assume that $\Re x_{ij}$ and $\Im x_{ij}$ are independent identically distributed (iid) real random variables with mean zero and variance $\frac12$ so that
\beq
\E x_{ij} =0\quad\textrm{and}\quad \E|x_{ij}|^2 =1 \qquad i=1,...,N, \, j = 1, \dots, M\,.
\eeq
and
\[
d :=M/N.
\]
In what follows we shall denote by $X_N$ the scaled matrix
\beq
\label{e:XN}
X_N=X/\sqrt{N}.
\eeq
 We are interested in the analysis of the asymptotic empirical spectral measure of the matrix $X_N^* X_N$ for $N\to \infty$, when $M = N$. This is the case when the limiting measure has a square root singularity near 0 with typical distance between eigenvalues on the order of $\frac{1}{N^2}.$ We are able to obtain results on the hard edge, the bulk, and the soft edge in a unified way.

\medskip
Let $s_\al$, $\al=1,...,N$, be the eigenvalues of $X_N^*X_N$. Since $X_N^*X_N$ is Hermitian and positive definite we can assume that $0\leq s_1\leq s_2\leq . . .  \leq s_N$. We denote by $n_N$  the empirical spectral distribution of the eigenvalues $s_\al$,
\beq
n_N(E)=\frac1N\#\{\al\leq N\,|\; s_\al \leq E\}\,
\eeq
and
\begin{equation}
\mathcal{N}(I) = \#\{\al\leq N\,|\; s_\al \in I\}
\end{equation}

For any $\theta \in\CO$ with $\Im \theta \neq 0$ we define the  Stieltjes transform of $n_N$ as
\beq
\label{e:DeN}
\Delta_N(\theta)
=
\int_\RE \frac{1}{x-\theta} dn_N(x)=
\frac{1}{N} \Tr (X_N^*X_N -\theta )^{-1}
=
\frac{1}{N}\sum_{\alpha=1}^N \frac{1}{s_\al - \theta}\,.
\eeq

We denote by $\nu$ the probability distribution of $\Re x_{ij}$ and $\Im x_{ij}$. In this paper we assume that
\begin{equation} \label{a:4moment}\sup_{N\geq 1} \sup_{1 \leq j,k \leq N} \E|x_{jk}|^4 =: \mu_4 < \infty  ,\end{equation} %
and that there exists a constant $D>0$ such that for all $N$:
\begin{equation}\label{a:truncation} \sup_{1 \leq j,k \leq N} |x_{jk}| \leq D N^{1/4} . \end{equation} These assumptions are the same as in the papers of G\"otze-Tikhomirov \cite{gotze2014rate, gotze2016optimal}, and with easy modifications all the proofs and results hold as well for $x_{ij}$ such that $\E |x_{ij}|^q \leq (Cq)^{cq}$ for universal constants $C, c$.

\medskip

The first results about universality of covariance matrices date back to '67. Let \[\lambda_{\pm} = (1 \pm \sqrt d)^2.\]
 Marchenko Pastur in \cite{marchenko1967distribution} show that $d\nu_N \rightarrow \rho$ weakly with probability 1, where $\rho$ is the Marchenko-Pastur distribution, given by
\begin{equation}\label{MP}
\rho_{MP}(E) = \frac{1}{2\pi} \sqrt{\frac{(\lambda_{+}-E)(E - \lambda_{-})}{E^2}},
\end{equation}
whenever $E \in [\lambda_{-}, \lambda_{+}]$ and 0 otherwise.
In the case of a square matrix $X$, the density of the Marchenko-Pastur distribution is
\beq
\label{e:rhoMP}
\rho(E) =
\left\{
\begin{aligned}
&\frac{1}{2\pi} \sqrt{\frac{4}{E}-1} & \qquad 0<E\leq 4 \\
&0&\textrm{otherwise}
\end{aligned}
\right.
\eeq
and for  any $\theta$ such that $\Im \theta\neq 0$ we denote by $\Delta$ the associated Stieltjes transform
\beq
\label{e:De}
\Delta(\theta )= \int_{\RE} \frac{1}{x-\theta} \rho (x) dx\,
\eeq
which satisfies the quadratic equation
\begin{equation}
\Delta = -\frac{1}{\theta (\Delta+ 1)}.
\end{equation}
\medskip
In \cite{marchenko1967distribution}, the convergence of the density of states is on intervals whose sizes are independent of $N$. In this case, the intervals that are away from the endpoints contain an order of $N$ eigenvalues. A natural question to study is whether the convergence remains on intervals whose size (we call the interval size scale) goes to zero as $N$ grows.

\medskip
In \cite{erdos2012local}, Erd\"os-Schlein-Yau-Yin establish convergence of the empirical spectral density for general covariance matrices to the Marchenko-Pastur law in the bulk for $d < 1$ on small intervals. They use a decomposition by minors for the diagonal elements of the resolvent to establish a self-consistent equation for the Stieltjes transform $\Delta_N$ of $d\nu_N$. Large deviation estimates and a continuity argument are then used show the convergence of the spectral measure on small intervals (involving polynomial corrections) in the bulk  distribution. These methods have been extended to the hard edge and logarithmic rather than polynomial corrections by Cacciapuoti-Maltsev-Schlein in \cite{cacciapuoti2013local}. More precisely, the authors show that the fluctuation of the Stieltjes transform $\sqrt E\Delta_N$ away from $\sqrt E\Delta$ is on the order of $\sqrt{\frac{\sqrt{E}}{N\eta}}$ and they obtain convergence of the counting function of eigenvalues everywhere including close to the hard edge. Eigenvalue rigidity with polynomial corrections for the bulk and soft edges for entries with subexponential decay can be found in Pillai-Yin \cite{pillai2014universality}.

\medskip
A related question is that of the universality of the correlation function of the eigenvalues. Results in the bulk using local laws and a local relaxation flow can be found in \cite{erdos2012local, pillai2014universality}. A similar result in \cite{tao2012random} by proving a version of the four moment theorem for random covariance matrices for any $0< d \leq 1$ in the bulk of the spectrum. Wang \cite{wang2012random} extends these results to the soft edge (cf Remark 1.8 in \cite{wang2012random}). For the hard edge, universality of the joint distribution of low-lying eigenvalues has been established by Tao-Vu in \cite{tao2010random}. Another related question is about the rate of convergence of the density of states to the Marchenko-Pastur law. In \cite{gotze2014rate}, the authors establish that the Kolmogorov distance between the expected spectral measure and the Marchenko-Pastur law is $O(N^{-1})$. Additionally, there has been some remarkable progress on similar questions in the case of Wigner (matrices with i.i.d. entries up to Hermitian symmetry) and more general Wigner-type matrices \cite{landon2020applications, landon2021single}. The authors use homogenization theory, which relies on coupling two Dyson Brownian motions, to establish the Gaussianity of fluctuation of individual eigenvalues in the bulk of the spectrum.

\medskip

In this paper we obtain optimal bounds on the expectations of high moments of the fluctuation $\Lambda = \Delta_N - \Delta$ on the optimal scale. Our methods and results apply to the bulk as well as the soft and hard edges. The main objective of this work is to extend the results and methodology of \cite{cacciapuoti2015bounds} to a hard edge setup. We were able to simplify the proof of Theorem \ref{t:sti} in \cite{cacciapuoti2015bounds} avoiding different cases for the bulk and edges. Unlike in the Wigner case, where both edges are soft, the presence of the hard edge at 0 allows us to extend the bounds on the real part of the Stieltjes transform to the negative real line, thus also yielding a fluctuation for the individual eigenvalue near the hard edge that is decreasing with the eigenvalue number. This paper also improves on \cite{cacciapuoti2013local} by removing the logarithmic corrections and improving the fluctuation bounds. We also extended the proofs in \cite{gotze2016optimal,gotze2014rate} on fluctuations of quadratic forms to a soft edge setup by improving a factor of $|\Delta|$ to a factor of $\Im \Delta$.

\medskip
To state our theorem we define the domain $S_{E, \eta}$ where we obtain our bounds:
\begin{equation}\label{e:setae}
S_{E, \eta}:= \{4\eta > c(E^2 + \eta^2 -4E) \}
\end{equation}
for some $c >0$. This domain is chosen so that $\Im (\Delta + 1/2)^2 \geq c\Re ((\Delta + 1/2)^2)$ which we need for the proof of Proposition \ref{b-R}. While all the proofs work for all $c>0$ not dependent on $N$, we will specifically work with $c=1$ to allow us the opportunity to illustrate it the following picture, Figure \ref{f:domains}.
\begin{figure}[h]\label{f:domains}
\centering{\includegraphics[scale=0.5]{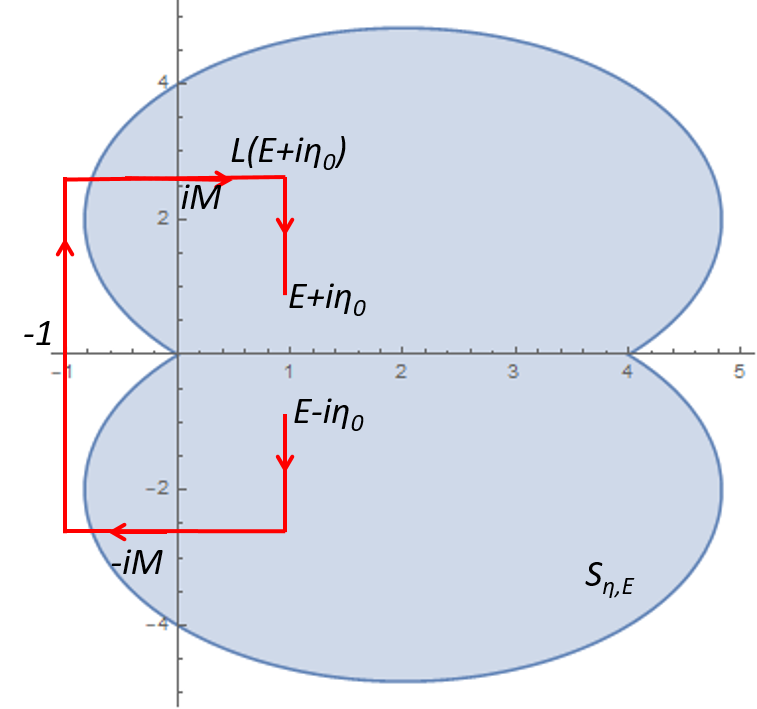}
\caption{The set $S_{\eta, E}$ shaded in blue and the integration contour $L(z_0)$ from \eqref{e:pleijel} in red}
}
\end{figure}

\begin{theorem}
\label{t:sti}
Let $X_N$ be a $N\times N $ matrix as described in equation \eqref{e:XN}, and assume \eqref{a:4moment} and \eqref{a:truncation}. Let $\Delta_N$ and $\Delta$ be the Stieltjes transforms defined in equations \eqref{e:DeN} and \eqref{e:De}. Moreover set $\theta = E + i \eta$, with $\frac{N\eta}{|\sqrt{\theta}|} \geq M$  for some suitably large $M$. Then there exist positive constants $c_0$, $C$ such that for each $K>0$ and $ 1\leq q \leq c_0\left ( \frac{N\eta}{|\sqrt{\theta}|} \right )^{1/8} $ and $\theta \in S_{E,\eta}$ or $E < 0$
\beq
 \PP \left( |\sqrt{\theta}| | \Delta_N(\theta)-\Delta(\theta) | \geq K \frac{|\sqrt{\theta}|}{N\eta} \right) \leq \frac{(Cq)^{cq^2}}{K^q} \\
\eeq
Furthermore, for any $E \in \R$ and $\eta>0$ such that $\frac{N\eta}{|\sqrt{\theta}|} \geq M$ we have that
\beq
 \PP \left( |\sqrt{\theta}| |\Im \left(\Delta_N(\theta)-\Delta(\theta)\right) | \geq K \frac{|\sqrt{\theta}|}{N\eta} \right) \leq \frac{(Cq)^{cq^2}}{K^q}.
\eeq
\end{theorem}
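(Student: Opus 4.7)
The plan is to follow the standard Schur complement / self-consistent equation approach for local laws, with two adaptations: the stability of the equation is controlled uniformly across bulk, hard edge and soft edge via the domain $S_{E,\eta}$, and the optimal scale (without logarithms) is reached by iterating high-moment estimates of $\Lambda = \Delta_N - \Delta$ directly, rather than combining pointwise bounds with a probabilistic union bound. Concretely, write $G(\theta) := (X_N^*X_N - \theta)^{-1}$, and let $X^{(k)}$ denote the $k$-th column of $X$ and $G^{(k)}(\theta)$ the resolvent of the minor with that column removed. Schur's complement formula gives
\begin{equation*}
G_{kk}(\theta) = \frac{-1}{\theta \bigl(1 + \tfrac{1}{N}\langle X^{(k)}, G^{(k)}(\theta) X^{(k)}\rangle\bigr)}.
\end{equation*}
Setting $\xi_k := \tfrac{1}{N}\langle X^{(k)}, G^{(k)} X^{(k)}\rangle - \tfrac{1}{N}\Tr G^{(k)}$ and using the interlacing bound $|\tfrac{1}{N}\Tr G - \tfrac{1}{N}\Tr G^{(k)}| \leq C/(N\eta)$, averaging over $k$ yields a perturbed self-consistent equation $\Delta_N = -1/(\theta(1+\Delta_N)) + \error$, with $\error$ a weighted average of the $\xi_k$'s together with an interlacing correction of order $1/(N\eta)$.

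The main probabilistic input is a sharp high-moment estimate on $\xi_k$. I would split $\xi_k$ into its diagonal and off-diagonal contributions and bound each via Burkholder-style martingale inequalities, exploiting the 4-th moment condition \eqref{a:4moment} and the truncation \eqref{a:truncation}. A key refinement over previous work is to replace the factor $|\Delta|$ appearing in the quadratic form bounds of \cite{gotze2014rate,gotze2016optimal,cacciapuoti2015bounds} by $\Im \Delta$ through the Ward identity $\sum_j |G^{(k)}_{ij}|^2 = \Im G^{(k)}_{ii}/\eta$, giving an estimate of the schematic shape
\begin{equation*}
\E |\xi_k|^{2q} \leq (Cq)^{cq^2} \left(\frac{\Im \Delta + \E|\Lambda|}{N\eta}\right)^q
\end{equation*}
which is suitable for bootstrapping.

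The main obstacle is the stability analysis. Subtracting $\Delta(1+\Delta) = -1/\theta$ from its perturbed counterpart produces a quadratic equation $A \Lambda^2 + B \Lambda = \error$ with $B$ proportional to $\sqrt{\theta}(2\Delta+1)$. On $S_{E,\eta}$ the defining inequality $4\eta > c(E^2+\eta^2-4E)$ is equivalent to $\Im(\Delta+1/2)^2 \geq c\,\Re((\Delta+1/2)^2)$, which forces $|B|$ to be bounded below by a multiple of $|\sqrt{\theta}|$ and yields the linear stability $|\sqrt{\theta}|\,|\Lambda| \leq C|\error|/|B| + C|\Lambda|^2$. On the complementary region $\{E < 0\}$, $\theta$ lies off the limiting spectrum and $\Delta + 1/2$ is controlled directly from the explicit formula $\Delta = (-1 + \sqrt{1-4/\theta})/2$. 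Bootstrapping the quadratic correction inside $\E|\Lambda|^{2q}$ promotes an a priori bound of order $(N\eta)^{-1/2}$ to the optimal order $(N\eta)^{-1}$, and Markov's inequality applied at the exponent $q \leq c_0(N\eta/|\sqrt{\theta}|)^{1/8}$ produces the claimed tail $(Cq)^{cq^2}/K^q$.

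For the second statement, one takes imaginary parts of the self-consistent equation. The resulting scalar relation for $\Im \Lambda$ has a linear coefficient controlled by $\Im \Delta \geq 0$, and the positivity of both $\Im \Delta_N$ and $\Im \Delta$ removes the cancellations that constrain the full $\Lambda$. Consequently the lower bound on $|B|$ provided by $S_{E,\eta}$ is no longer needed, and the same bootstrap yields the stated bound for every $E \in \R$, $\eta>0$.
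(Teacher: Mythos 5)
Your overall architecture (Schur complement, self-consistent quadratic equation for $\Lambda$, stability via the domain $S_{E,\eta}$, high moments plus Markov) matches the paper's. But there is a genuine gap at the step where you claim to reach the optimal scale. You propose to bound each individual quadratic form $\xi_k$ by $\E|\xi_k|^{2q}\lesssim ((\Im\Delta+\E|\Lambda|)/(N\eta))^{q}$, i.e. $|\xi_k|\sim(\Im\Delta/(N\eta))^{1/2}$, and then assert that ``bootstrapping the quadratic correction inside $\E|\Lambda|^{2q}$ promotes an a priori bound of order $(N\eta)^{-1/2}$ to the optimal order $(N\eta)^{-1}$.'' This cannot work. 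In the bulk, where $|\Delta+1/2|\sim 1$, the equation $A\Lambda^2+B\Lambda=\error$ gives $|\Lambda|\lesssim|\error|/|B|\sim|\error|$, and the quadratic term only makes things worse near the edges; no re-substitution of $\Lambda$ into itself improves the size of $\error$. The passage from $(N\eta)^{-1/2}$ to $(N\eta)^{-1}$ must come from the error term itself: one has to show that the average $\frac1N\sum_k(\mathbb{I}-\E_k)\,\Upsilon^{(\{k\})}G_{kk}$ is of order $(N\eta)^{-1}$ because of cancellations among the $k$'s (fluctuation averaging), together with a separate bound on $\E_k[\Upsilon^{(\{k\})}G_{kk}]$, which is quadratic in $\Upsilon^{(\{k\})}$ and hence also $O(\Im\Delta/(N\eta))$. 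This is exactly what the paper's Proposition \ref{p:optimalbound} accomplishes via the matrix expansion algorithm of \cite{cacciapuoti2015bounds}, yielding $\E\bigl|\frac1N\sum_kW_k\bigr|^{2q}\leq(Cq)^{cq^2}\mathcal{E}_{4q}^{1/2}$; it is the technical heart of the proof and is entirely absent from your outline.

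A secondary issue: your stability claim that on $S_{E,\eta}$ the linear coefficient $|B|\propto|2\Delta+1|$ ``is bounded below by a multiple of $|\sqrt{\theta}|$'' is false near the soft edge, since $|\Delta+1/2|^2=|1/4-1/\theta|\to0$ as $\theta\to4$ and $S_{E,\eta}$ contains a neighbourhood of the soft edge. The paper closes the argument there because the improved error bound is proportional to $\Im\Delta$ and $\Im\Delta/|\Delta+1/2|\leq C$ uniformly (together with $\sqrt{\eta}/(|\theta||\Delta+1/2|)\leq C$): the degeneracy of $B$ is compensated by the degeneracy of the error, not avoided. Your Ward-identity refinement replacing $|\Delta|$ by $\Im\Delta$ is the right ingredient for this, but your outline never connects it to the stability step where it is actually needed, and as written it would be superfluous if $|B|$ really were bounded below.
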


We then use our Theorem \ref{t:sti} to obtain fluctuation estimates on the counting function as stated in the next theorem. Letting
\begin{equation}
n_{MP}(E) = \int_0^E \rho(x)dx,
\end{equation}
we compare it to $n_N$.

\begin{theorem}\label{t:counting}
With assumptions as in Theorem \ref{t:sti}, there exist constants $M_0, N_0, C, c >0$ such that for any $K > 0$ and $E \geq \frac{M_0}{N^2}$
\begin{equation}\label{e:counting}
\PP\left(|n_N(E) - n_{MP}(E)| \geq K \min \left\{\sqrt {E}, \frac{ \log N}{N}\right\}\right) \leq \frac{(Cq)^{cq^2}}{K^q}
\end{equation}
for all $E\in \R$, $K>0, N>N_0, q\in \N$.

\end{theorem}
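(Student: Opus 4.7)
The plan is to represent the difference $n_N(E) - n_{MP}(E)$ as a Pleijel-type contour integral of $\Lambda = \Delta_N - \Delta$ along the contour $L(z_0)$ pictured in Figure \ref{f:domains}. Applying the residue theorem to the rectangle with vertices $z_0 \pm i\eta_0$ and $E \pm i\eta_0$, with $z_0$ chosen to the left of the spectrum so that $n_N(z_0) = n_{MP}(z_0) = 0$, and using the Schwarz-reflection identities $\Delta_N(\bar\theta) = \overline{\Delta_N(\theta)}$ and $\Delta(\bar\theta) = \overline{\Delta(\theta)}$, one obtains
\[
\pi[n_N(E) - n_{MP}(E)] = \int_{z_0}^E \Im \Lambda(x + i\eta_0)\,dx + \int_0^{\eta_0} \Re \Lambda(z_0 + iy)\,dy - \int_0^{\eta_0} \Re \Lambda(E + iy)\,dy.
\]
It then suffices to bound each term in $L^q$ by $\min\{\sqrt E, \log N/N\}$ and convert to probability via Markov's inequality.

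By a standard tail-to-moment calculation, Theorem \ref{t:sti} yields $(\E|\Lambda(\theta)|^q)^{1/q} \leq (Cq)^{cq}/(N|\Im\theta|)$ whenever $N|\Im\theta|/|\sqrt\theta| \geq M$. In the bulk regime I would take $\eta_0$ equal to a fixed constant. Minkowski's inequality on the horizontal segment then produces an $L^q$ bound of order $(E-z_0)/N \lesssim 1/N$. For the vertical segments I split at the threshold $y_* = M|\sqrt\theta|/N$: on $[y_*, \eta_0]$, Minkowski combined with the pointwise Theorem \ref{t:sti} bound produces an $L^q$ estimate of order $(1/N)\log(\eta_0/y_*) \lesssim \log N/N$. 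For the remaining small piece $y \in [0, y_*]$ the closed-form identity
\[
\int_0^{y_*} \Re \Delta_N(E+iy)\,dy = \frac{\pi}{2}(1 - 2 n_N(E)) - \frac{1}{N}\sum_\alpha \arctan((s_\alpha - E)/y_*)
\]
together with its MP counterpart reduces this contribution to a smooth linear statistic against $\arctan((\cdot - E)/y_*)$, which can be bounded by a secondary Helffer--Sjöstrand-type reduction that again invokes Theorem \ref{t:sti}, now at scale $y_*$.

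Near the hard edge, where the claimed bound is the smaller quantity $K\sqrt E$, I would instead take $z_0 = 0$ (exploiting that Theorem \ref{t:sti} also covers $E < 0$) and $\eta_0$ of order $\sqrt E$. With these choices $|\sqrt\theta|/(N\eta_0)$ remains bounded, the horizontal length $E - z_0$ is $O(E)$, and an analogous decomposition yields an $L^q$ bound of order $\sqrt E$, matching both $n_N(E)$ and $n_{MP}(E)$ in size. Taking the better of the two regimes, summing the three pieces, and applying Markov's inequality with parameter $q$ produces the claimed bound $\PP(|n_N(E) - n_{MP}(E)| \geq K\min\{\sqrt E, \log N/N\}) \leq (Cq)^{cq^2}/K^q$.

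The main technical obstacle lies in the vertical sliver $[0, y_*]$, where Theorem \ref{t:sti} does not directly apply. The arctan identity couples the estimate back to $n_N(E)$ itself, so the argument must be organized as a bootstrap or self-consistency step in which the linear-statistic contribution at the smallest scale is re-expressed as an integral of $\Lambda$ on a contour at a coarser scale where Theorem \ref{t:sti} is applicable. Orchestrating this reduction uniformly in $E$ (across the bulk, the hard edge, and the small interpolating region $E \in [M_0/N^2, (\log N/N)^2]$) while preserving the sharp factorial-type dependence $(Cq)^{cq^2}$ in the final probability bound is the delicate combinatorial bookkeeping of the proof.
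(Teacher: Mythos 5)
Your overall architecture matches the paper's: both represent $n_N(E)-n_{MP}(E)$ by a contour integral of $\Lambda$, bound each piece in $L^q$ using the moment form of Theorem \ref{t:sti}, and finish with Markov. The vertical segments above the threshold scale and the horizontal segment are handled essentially as in the paper (generalized H\"older/Minkowski under the integral, giving the $\log N/N$ factor from $\int \frac{dy}{Ny}$), and your hard-edge idea of exploiting positivity of the spectrum to reach $\sqrt E$ parallels the paper's use of the symmetric interval $[-E,E]$.

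However, there is a genuine gap exactly where you locate "the main technical obstacle": the portion of the vertical segments at heights $y\in[0,y_*]$ below the scale where Theorem \ref{t:sti} applies. A trivial bound fails there since $|\Re\Delta_N(E+iy)|$ can be of order $1/y$, and your proposed fix does not close. The arctan identity you write reintroduces $n_N(E)$ itself (harmless, it just changes a constant) \emph{and} the linear statistic $\frac1N\sum_\alpha\arctan((s_\alpha-E)/y_*)-\int\arctan((x-E)/y_*)\rho(x)\,dx$, which is a statistic of a test function varying on the scale $y_*\sim\sqrt E/N$; controlling it to precision $\log N/N$ is of the same difficulty as the counting estimate being proved, and the "secondary Helffer--Sj\"ostrand reduction ... organized as a bootstrap" is precisely the step that is never carried out. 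The paper avoids this entirely by invoking Pleijel's formula \eqref{e:pleijel}--\eqref{e:pleijel2}: the contour is truncated at height $\eta_0$ and the contribution of the missing piece is given \emph{exactly} (up to a harmless $O$) by $\frac{\eta_0}{\pi}\Re m_\mu(z_0)+O(\eta_0\Im m_\mu(z_0))$, which is then bounded by $\eta_0\E|\Lambda(z_0)|^q$ and $\eta_0\Im\Delta_{MP}(z_0)\leq C\eta_0/\sqrt E=CM/N$ after choosing $\eta_0=M\sqrt E/N$. This truncation lemma (or an equivalent) is the missing ingredient in your argument; without it the sliver $[0,y_*]$ is not controlled. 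Two smaller omissions: you do not treat $E>4$ (the paper reduces it to $E=4$ by counting eigenvalues outside the support), and your two-regime choice of $\eta_0$ (constant in the bulk, $\sqrt E$ at the hard edge) would still need the same truncation device in each regime, since in both cases the vertical segments reach the real axis.
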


\medskip

We use the above estimate to obtain rigidity estimates that is how far each eigenvalue can fluctuate away from its classical location.
We define the classical locations of the eigenvalues, predicted by the Marchenko-Pastur distribution, as the points $\gamma_a$, ($a=1,...,N$) such that
$$ \int_0^{\gamma_a} \rho(E) dE = \frac{a}{N}. $$
In particular, we obtain the fluctuation of eigenvalues near the hard edge to be of the order of $\frac{\log N}{N^2}$. The fluctuations of eigenvalues in the bulk and soft edges of both the Gaussian Unitary Ensemble and the Wishart Ensemble are known to be respectively of the order $\frac{\sqrt{\log N}}{N}$ in the bulk and $\frac{\sqrt{\log k}}{k^{1/3}N^{2/3}}$ for the $k$th eigenvalue from the edge, $k \rightarrow \infty$ (see \cite{gustavsson2005gaussian, su2006gaussian}). To our knowledge similar results are not yet available for the hard edge.

\begin{theorem}\label{t:rigidity}
With assumptions as in Theorem \ref{t:sti}, there exist constants $C,c,N_0,\epsilon>0$ such that
\begin{equation} \label{e:rigiditybulk}\mathbb{P} \left ( | \lambda_a  - \gamma_a | \geq K \frac{ \log N}{N} \left ( \frac{a}{N}  \right )\right) \leq \frac{(Cq)^{cq^2}}{K^q} \end{equation}
for $a=1,...,\lceil N/2 \rceil$, $N>N_0$, $K>0$, and $q \in \mathbb{N}$ with $q\leq N^\epsilon.$
Furthermore, for $a \leq \log N$ we have that
\begin{equation} \label{e:rigidityedge}\mathbb{P} \left ( | \lambda_a  - \gamma_a | \geq K \frac{ a^2}{N^2}\right) \leq \frac{(Cq)^{cq^2}}{K^{q/2}}. \end{equation}

\end{theorem}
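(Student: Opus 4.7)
The plan is to deduce both rigidity bounds from the counting function estimate in Theorem~\ref{t:counting} by inverting the map $E \mapsto n_{MP}(E)$. The starting point is the a.s.\ identity $n_N(\lambda_a)=a/N=n_{MP}(\gamma_a)$, which, for any deterministic $E$, gives
\[
n_{MP}(E)-n_N(E) = \bigl(n_{MP}(E)-n_{MP}(\gamma_a)\bigr)-\bigl(n_N(E)-n_N(\lambda_a)\bigr).
\]
In particular, if $\lambda_a\geq E$ then $n_N(E)\leq (a-1)/N$, so $|n_{MP}(E)-n_N(E)| \geq n_{MP}(E)-n_{MP}(\gamma_a)-1/N$. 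I would apply Theorem~\ref{t:counting} at carefully chosen deterministic test points $E^{\pm}=\gamma_a\pm t$ (where $t$ is the target threshold), lower-bound $n_{MP}(E^{\pm})-n_{MP}(\gamma_a)$ using monotonicity and smoothness of $n_{MP}$, and convert the resulting control into bounds on the position of $\lambda_a$.

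For the bulk estimate \eqref{e:rigiditybulk} with $1\leq a\leq\lceil N/2\rceil$, I would take $t=K(\log N/N)(a/N)$ and use the $\log N/N$ branch of the minimum in Theorem~\ref{t:counting}. Using $\sqrt{\gamma_a}\asymp a/N$ throughout this range, together with the lower bound $\rho(E)\geq c/\sqrt{E}$ near the hard edge (and $\rho(E)\geq c>0$ in the bulk), one obtains
\[
n_{MP}(E^+)-n_{MP}(\gamma_a) \;\gtrsim\; \frac{t}{\sqrt{\gamma_a}} \;\asymp\; K\,\frac{\log N}{N}.
\]
On the event $\lambda_a\geq E^+$ this forces $|n_N(E^+)-n_{MP}(E^+)|\gtrsim K\log N/N$, whose probability by Theorem~\ref{t:counting} is at most $(Cq)^{cq^2}/K^q$. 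The reverse inequality $\lambda_a\leq E^-$ is handled symmetrically.

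For the edge estimate \eqref{e:rigidityedge} with $a\leq\log N$, set $t=Ka^2/N^2$. Using the hard-edge expansion $n_{MP}(E)=2\sqrt{E}/\pi+O(E^{3/2})$ one computes
\[
n_{MP}(E^+)-n_{MP}(\gamma_a) \;\asymp\; \sqrt{E^+}-\sqrt{\gamma_a} \;\asymp\; \min(K,\sqrt{K})\,\frac{a}{N}.
\]
When $\sqrt{E^+}\leq \log N/N$ the $\sqrt{E}$ branch of Theorem~\ref{t:counting} gives a bound $K'\sqrt{E^+}\asymp K'(1+\sqrt{K})(a/N)$, so the bad event requires $K'\gtrsim \sqrt{K}/(1+\sqrt{K})$; for larger $K$ the $\log N/N$ branch forces $K'\gtrsim \sqrt{K}\,a/\log N$. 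After absorbing the logarithmic prefactors into the constants $C,c$, each regime yields a probability $\leq (Cq)^{cq^2}/K^{q/2}$.

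The main obstacle is the bookkeeping: since $\lambda_a$ is random while Theorem~\ref{t:counting} is pointwise in a deterministic $E$, one has to apply it at shifted test points $E^{\pm}$ whose relevant regime (hard edge vs.\ bulk) depends on $K$. The square-root exponent $q/2$ in the edge bound arises because at the hard edge $\sqrt{\lambda_a}$, rather than $\lambda_a$, is the natural variable: a deviation $|\lambda_a-\gamma_a|\geq K\gamma_a$ forces $|\sqrt{\lambda_a}-\sqrt{\gamma_a}|\asymp \sqrt{K}\,\sqrt{\gamma_a}$ in the worst case $\lambda_a\gg\gamma_a$, and this square-root loss propagates directly to the exponent in ${K'}^q$.
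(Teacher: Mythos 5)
Your overall strategy is exactly the paper's: both proofs invert the counting estimate of Theorem \ref{t:counting} by testing at deterministic points $\gamma_a\pm t$, lower-bounding $n_{MP}(\gamma_a\pm t)-n_{MP}(\gamma_a)$ via the mean value theorem and the hard-edge behaviour $\rho(x)\asymp x^{-1/2}$, $n_{MP}(x)\asymp\sqrt x$, and you correctly identify the square-root loss at the hard edge as the source of the exponent $q/2$ in \eqref{e:rigidityedge}.

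There is, however, one concrete gap in your treatment of the bulk bound \eqref{e:rigiditybulk} for the upward deviation $\lambda_a>\gamma_a$. Your display $n_{MP}(E^+)-n_{MP}(\gamma_a)\gtrsim t/\sqrt{\gamma_a}$ implicitly uses $\rho\gtrsim 1/\sqrt{\gamma_a}\asymp N/a$ on all of $[\gamma_a,\gamma_a+t]$. Since $\rho$ is decreasing, this requires $\gamma_a+t\lesssim\gamma_a$, i.e.\ $t\lesssim\gamma_a\asymp(a/N)^2$; with $t=K(\log N/N)(a/N)$ this fails precisely when $a\lesssim K\log N$, which is inside the stated range $1\leq a\leq\lceil N/2\rceil$. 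In that regime $n_{MP}(\gamma_a+t)-n_{MP}(\gamma_a)\asymp\sqrt t$, which is much smaller than $t/\sqrt{\gamma_a}$, and the argument as written does not close (for the downward deviation $\lambda_a\leq\gamma_a$ there is no issue, since the test point lies to the left of $\gamma_a$ where $\rho$ is larger). The paper addresses exactly this by splitting the event $\lambda_a>\gamma_a$ according to whether $\gamma_a+\ell$ exceeds an auxiliary reference point $y=2C(a/N)^2$: when it does, the counting estimate is applied at $y$ rather than at $\gamma_a+\ell$, so that the mean value theorem is only ever invoked on $[\gamma_a,y]$, where $\rho\geq\rho(y)\gtrsim N/a$ uniformly. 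You flag the regime-dependence of the test points as ``bookkeeping,'' and your edge computation ($\sqrt{E^+}-\sqrt{\gamma_a}\asymp\min(K,\sqrt K)\,a/N$) shows you see the phenomenon, but the bulk case needs this explicit capping of the test point (or an equivalent device) to be a proof rather than a plan.
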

In this theorem the factor $\frac{a}{N}$ accounts for the higher density at the hard edge. Here we focus on hard-edge rigidity, since proofs of soft-edge rigidity require control of the largest eigenvalue which, to our knowledge, is not currently available in the case of truncated entries with four moments, in either Wigner or Sample Covariance case.

\section{Useful Identities}
In this section we collect some useful known identities. Let $\J, \J_1, \J_2 \subset \{1, ..., N\}$. We will denote by $X^{(\J)}$ the submatrix of $X_N$ with columns of indices $\J$ removed, and $X_{(\J)}$ with rows of indices $\J$ removed.

We define the resolvent matrices
\begin{equation}G^{(\J_1)}_{(\J_2)}:=\left((X^{(\J_1)}_{(\J_2)})^* X^{(\J_1)}_{(\J_2)}-\theta\right)^{-1} \quad \text{and} \quad \mathcal{G}^{(\J_1)}_{(\J_2)}:=\left(X^{(\J_1)}_{(\J_2)}(X^{(\J_1)}_{(\J_2)})^*  - \theta\right)^{-1}.
\end{equation}
 When our arguments work for any $\J_1, \J_2$ we will mention this and then suppress them for ease of notation, and we will write $G^{(\J_1)}_{(\J_2), ij}$ for the $ij$th element. We notice here that $G^{(\J)}$ is the minor of $G: = G^{(\emptyset)}$ with $\J$-th rows and $\J$-th columns removed. Lastly we notice that
\begin{equation}\label{e:traces}\Tr \mathcal{G}^{(\J_1)}_{(\J_2)} =  \frac{|\J_1|- |\J_2|}{\theta} + \Tr G^{(\J_1)}_{(\J_2)}
\end{equation}
Similarly we introduce
\begin{equation}\label{d:LambdaDelta}
\Delta^{(\J_1)}_{N, (\J_2)}: = \Tr G^{(\J_1)}_{(\J_2)} \quad \text{and} \quad \Lambda^{(\J_1)}_{(\J_2)} := \Delta^{(\J_1)}_{N, (\J_2)} - \Delta
\end{equation}
and we use $\Delta_N$ and $\Lambda$ when $\J_1, \J_2 = \emptyset$. We will use $\xx_k$ and $\xx^k$ for rows and columns of $\sqrt{N}X_N$ respectively.

We state some well-known identities for resolvent entries (Lemma 2.3 of \cite{pillai2014universality}).
\begin{lemma} \label{resol}
With $G^{(\J_1)}_{(\J_2)}$ as before for $i, j \neq k$, we have
\begin{equation}\label{e:Gij}
 G^{(\J_1)}_{(\J_2), ij} = G^{(\J_1\cup \{k\})}_{(\J_2), ij} + \frac{G^{(\J_1)}_{(\J_2), ik}G^{(\J_1)}_{(\J_2), ki}}{G^{(\J_1)}_{(\J_2), kk}} .
 \end{equation}

 \end{lemma}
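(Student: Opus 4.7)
The plan is to recognize the stated equality as an instance of Schur's block-inverse formula applied to a single row/column deletion, with one line of bookkeeping needed to connect the two sides.

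First I would set $A := (X^{(\J_1)}_{(\J_2)})^* X^{(\J_1)}_{(\J_2)} - \theta I$, so that $G^{(\J_1)}_{(\J_2)} = A^{-1}$. The entrywise formula $((X^{(\J_1)}_{(\J_2)})^* X^{(\J_1)}_{(\J_2)})_{ab} = \sum_{\alpha \notin \J_2} \overline{X}_{\alpha a} X_{\alpha b}$ shows that deleting the $k$-th column of $X^{(\J_1)}_{(\J_2)}$ removes precisely the $k$-th row and column of its Gram matrix; equivalently, the principal submatrix $B$ of $A$ obtained by striking row $k$ and column $k$ satisfies $B^{-1} = G^{(\J_1 \cup \{k\})}_{(\J_2)}$.

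After reordering so that $k$ is the last index I would write
$$A = \begin{pmatrix} B & v \\ v^* & c \end{pmatrix}, \qquad A^{-1} = \begin{pmatrix} B^{-1} + s\, B^{-1} v\, v^* B^{-1} & -s\, B^{-1} v \\ -s\, v^* B^{-1} & s \end{pmatrix},$$
with $v$ the $k$-th column of $A$ minus its diagonal entry, $c := A_{kk}$, and $s := (c - v^* B^{-1} v)^{-1}$; the lower-left block is $v^*$ because $A$ inherits its Hermitian off-diagonal structure from $X^*X$ (the $-\theta I$ correction affects only the diagonal). Reading off the entries for $i, j \neq k$ yields
$$G^{(\J_1)}_{(\J_2), ij} = (B^{-1})_{ij} + s\, (B^{-1} v)_i\, (v^* B^{-1})_j,$$
$$G^{(\J_1)}_{(\J_2), ik} = -s\, (B^{-1} v)_i, \qquad G^{(\J_1)}_{(\J_2), kj} = -s\, (v^* B^{-1})_j, \qquad G^{(\J_1)}_{(\J_2), kk} = s,$$
so that the correction term reorganizes as $G^{(\J_1)}_{(\J_2), ik}\, G^{(\J_1)}_{(\J_2), kj}\,/\,G^{(\J_1)}_{(\J_2), kk}$. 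Substituting this and $(B^{-1})_{ij} = G^{(\J_1 \cup \{k\})}_{(\J_2), ij}$ into the first line produces the stated identity.

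The argument is purely algebraic, so there is no genuine obstacle. The only steps requiring attention are the bookkeeping identification of the principal submatrix of $A$ with the resolvent obtained by enlarging $\J_1$ by $k$, and the verification that the block-inverse formula applies cleanly even though $A$ itself is not Hermitian (because $\theta \notin \R$). Both are elementary.
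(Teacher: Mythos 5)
Your proof is correct and is exactly the standard Schur-complement/block-inversion argument behind this identity; the paper itself offers no proof, simply citing Lemma 2.3 of Pillai--Yin, and your derivation (including the observation that deleting a column of $X$ deletes the corresponding row and column of the Gram matrix, and that Hermiticity of the off-diagonal blocks survives the $-\theta I$ shift) is precisely how that cited lemma is proved. Note that your computation correctly produces $G^{(\J_1)}_{(\J_2),ik}G^{(\J_1)}_{(\J_2),kj}$ in the numerator, whereas the paper's display writes $G^{(\J_1)}_{(\J_2),ki}$ in place of $G^{(\J_1)}_{(\J_2),kj}$ --- a typo that is harmless since the identity is only invoked with $i=j$.
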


Furthermore, as seen for example in (3.2) of \cite{cacciapuoti2015bounds}, we have the following relationship between the $(k,k)$ element of $G^2$ and $\Im G_{kk}$, and the same holds for $G^{(\J_1)}_{(\J_2)}, \mathcal{G}^{(\J_1)}_{(\J_2)}$:
\begin{equation}\label{e:GsImG}
|G(z)|_{kk}^2 = \frac{(\Im G(z))_{kk}}{\eta}
\end{equation}
yielding that
\begin{equation}\label{e:GsImG2}
|(G^2)_{kk}|\leq \frac{\Im G_{kk}}{\eta}.
\end{equation}

Next we observe that using the proof of (3.10) in \cite{cacciapuoti2015bounds} we can also obtain the following for the resolvent of the sample covariance ensemble, and the proof works for $G_{(\J_2)}^{(\J_1)}, \mathcal{G}_{(\J_2)}^{(\J_1)}$ for any $\J_1, \J_2$:
\begin{lemma}\label{G11eta/s}
With $G$ and $\mathcal{G}$ as before, we have that
\begin{equation}
G_{11}(E+i\eta/s) \leq sG_{11}(E+i\eta).
\end{equation}
\end{lemma}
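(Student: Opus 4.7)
The plan is to exploit the Herglotz (Nevanlinna) structure of the diagonal resolvent entry. Writing the spectral decomposition $X_N^* X_N = \sum_\alpha s_\alpha u_\alpha u_\alpha^*$, one has
\[
G_{11}(E+i\eta) = \sum_\alpha \frac{|u_\alpha(1)|^2}{s_\alpha - E - i\eta}, \qquad \Im G_{11}(E+i\eta) = \sum_\alpha \frac{\eta\,|u_\alpha(1)|^2}{(s_\alpha-E)^2 + \eta^2}.
\]
The content of the lemma is a monotonicity statement for such a sum of Poisson kernels, namely that $\eta \mapsto \eta^{-1}\Im G_{11}(E+i\eta)$ is nonincreasing in the strong sense $\Im G_{11}(E+i\eta/s) \leq s\,\Im G_{11}(E+i\eta)$ for $s \geq 1$.

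First I would prove this key inequality term by term. The claim reduces to
\[
\frac{\eta/s}{(s_\alpha-E)^2 + (\eta/s)^2} \leq \frac{s\,\eta}{(s_\alpha-E)^2 + \eta^2}
\]
for each $\alpha$, which after clearing denominators amounts to $(s_\alpha-E)^2 + \eta^2 \leq s^2(s_\alpha-E)^2 + \eta^2$, i.e.\ just $s \geq 1$. Since $|u_\alpha(1)|^2 \geq 0$, summing over $\alpha$ gives the bound on $\Im G_{11}$.

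Next I would pass from the imaginary part to the modulus using identity \eqref{e:GsImG}, which reads $|G_{11}(z)|^2 = \Im G_{11}(z)/\Im z$. This yields
\[
|G_{11}(E+i\eta/s)|^2 = \frac{\Im G_{11}(E+i\eta/s)}{\eta/s} \leq \frac{s\,\Im G_{11}(E+i\eta)}{\eta/s} = s^2\,\frac{\Im G_{11}(E+i\eta)}{\eta} = s^2\,|G_{11}(E+i\eta)|^2,
\]
so $|G_{11}(E+i\eta/s)| \leq s\,|G_{11}(E+i\eta)|$, which is the inequality asserted (with $G_{11}$ read, as is standard in this context, as its modulus). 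The same two lines prove the analogous statement with $\Im G_{11}$ in place of $|G_{11}|$ directly.

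There is essentially no obstacle: the argument is a one-line spectral manipulation followed by the identity \eqref{e:GsImG} already recorded. The only thing to note is that the proof uses nothing beyond the existence of a spectral decomposition of a Hermitian positive semidefinite matrix, so the same inequality holds verbatim for $G^{(\J_1)}_{(\J_2)}$ and $\mathcal{G}^{(\J_1)}_{(\J_2)}$ for any index sets $\J_1,\J_2$, as claimed.
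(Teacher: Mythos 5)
Your term-by-term Poisson-kernel argument correctly proves the imaginary-part version, $\Im G_{11}(E+i\eta/s) \leq s\,\Im G_{11}(E+i\eta)$ for $s\geq 1$. But the lemma is invoked later (in the induction of Lemma \ref{l:Gkk}) for the \emph{modulus} of $G_{11}$, and your passage from $\Im G_{11}$ to $|G_{11}|$ has a genuine gap: you read \eqref{e:GsImG} as the scalar identity $|G_{11}(z)|^2 = \Im G_{11}(z)/\eta$, but in \eqref{e:GsImG} the left side is $(|G|^2)_{kk} = (G^*G)_{kk} = \sum_\alpha |u_\alpha(k)|^2/|s_\alpha - z|^2$, the diagonal entry of the matrix $G^*G$. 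For the scalar entry one only has $|G_{11}|^2 = \bigl|\sum_\alpha |u_\alpha(1)|^2/(s_\alpha-z)\bigr|^2 \leq \sum_\alpha |u_\alpha(1)|^2/|s_\alpha-z|^2 = \Im G_{11}/\eta$ by Jensen, with strict inequality whenever there is cancellation among the phases. Your chain needs this as an equality at both ends: the first step ($\leq$) is harmless, but the last step requires $\Im G_{11}(E+i\eta)/\eta \leq |G_{11}(E+i\eta)|^2$, which is exactly the reverse inequality and is false in general. So the chain does not close and the modulus bound does not follow.

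The repair is the argument the paper is actually pointing to (the proof of (3.10) in \cite{cacciapuoti2015bounds}, reproduced in spirit in Lemma \ref{lem} of this paper): control the logarithmic derivative. Since $\frac{d}{d\eta}G_{11}(E+i\eta) = i\,(G^2)_{11}$, inequality \eqref{e:GsImG2} together with $\Im G_{11}\leq |G_{11}|$ gives
\begin{equation*}
\left|\frac{d}{d\eta}\log G_{11}(E+i\eta)\right| = \frac{|(G^2)_{11}|}{|G_{11}|} \leq \frac{\Im G_{11}}{\eta\,|G_{11}|} \leq \frac{1}{\eta},
\end{equation*}
and integrating over $[\eta/s,\eta]$ yields $\bigl|\log|G_{11}(E+i\eta)|-\log|G_{11}(E+i\eta/s)|\bigr|\leq \log s$, hence $|G_{11}(E+i\eta/s)|\leq s\,|G_{11}(E+i\eta)|$. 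This differential argument is what absorbs the phase cancellation that defeats the pointwise spectral comparison; your observation about the generality of the spectral decomposition (so that the result holds for $G^{(\J_1)}_{(\J_2)}$ and $\mathcal{G}^{(\J_1)}_{(\J_2)}$) carries over to it unchanged.
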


Furthermore we have the following bounds on the Stieltjes transform of the Marchenko-Pastur law.
For $E>0$ we set $\kappa := |E-4|.$ For any fixed $E_0',E_0>0$ and $\eta_0>0$ there exist constants $C>0$ such that

\beq \label{7} \left | \Delta + \frac{1}{2} \right | \geq C ( \kappa^2   +\eta^2)^{\frac{1}{4}} \geq C\sqrt{\kappa+\eta}, \eeq

and

\beq \label{8}
c \frac{\eta}{\sqrt{\kappa+\eta}} \leq \mathrm{Im} \Delta \leq C \frac{\eta}{\sqrt{\kappa+\eta}} , \eeq

$\forall E_0\leq E \leq E_0', \ 0<\eta\leq \eta_0,$ $\kappa \geq \eta .$

\section{Equations for $\Lambda$}

\begin{lemma}
\label{l:eqDelta}
Take $\theta = E+i\eta$.  For any $N\geq N_0$ one has
\beq
\label{e:eqDeltaN}
\Delta^{(\J_1)}_{N, (\J_2)}
= - \frac{1}{N} \sum_{k=1}^N
\frac{1}{\theta\left(1+\Delta^{(\J_1)}_{N, (\J_2)} + T_k + \Upsilon_{(\J_2)}^{(\J_1\cup \{k\})} \right)
}
=
- \frac{1}{N} \sum_{k=1}^N
\frac{1}{\theta\left(1+ \Delta^{(\J_1)}_{N, (\J_2)} + \mathcal{T}_k + Y_{(\J_2\cup \{k\})}^{(\J_1)} \right)
}
\eeq
with
\beq
\label{e:boundsT12}
|T_k|, |\mathcal{T}_k| \leq \frac{|\,|\J_1| - |\J_2|\,|+1}{N\eta}
\eeq
and
\begin{equation}\label{d:Upsilon}\Upsilon_{(\J_2)}^{(\J_1\cup \{k\})} := (\mathbb{I} - \E_{\textbf{x}^k}) (\textbf{x}^k/\sqrt N)^* \mathcal{G}_{(\J_2)}^{(\J_1\cup \{k\})} \textbf{x}^k/\sqrt N \quad \text{and}\quad Y_{(\J_2\cup \{k\})}^{(\J_1)}: = (\mathbb{I} - \E_{\textbf{x}_k}) \frac{\textbf{x}_k}{\sqrt N} G_{(\J_2\cup \{k\})}^{(\J_1)} \textbf{x}^*_k/\sqrt N.
\end{equation}
\begin{equation}\label{d:Tau}T_k := \frac{1}{N} \Tr \mathcal{G}_{(\J_2)}^{(\J_1) \cup \{k \}} - \frac{1}{N}\Tr G_{(\J_2)}^{(\J_1)} \quad \text{and}\quad \mathcal{T}_k: = \frac{1}{N} \Tr {G}_{(\J_2)}^{(\J_1) \cup \{k \}} - \frac{1}{N}\Tr \mathcal{G}_{(\J_2)}^{(\J_1)},
\end{equation}
where $\textbf{x}^k$ is the $k-$th column of the matrix $X$ and $\textbf{x}_k$ is the $k-$th row of $X$.

\end{lemma}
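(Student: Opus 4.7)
The plan is the standard Schur-complement reduction, applied once with columns of $X$ separated out (to obtain the first equality) and once with rows of $X$ separated out (for the second), combined with eigenvalue interlacing and the trace identity \eqref{e:traces} to bound the auxiliary terms $T_k, \mathcal{T}_k$.

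For the first equality, I would fix $k \notin \J_1$ and apply Schur's formula to $(X^{(\J_1)}_{(\J_2)})^* X^{(\J_1)}_{(\J_2)} - \theta$, separating out the $k$th row and column. Using the elementary identity $X G X^* = I + \theta \mathcal{G}$ (which follows from $(XX^*-\theta)\mathcal{G} = I$ and $X^*XG = I + \theta G$ applied to $X = X^{(\J_1\cup\{k\})}_{N,(\J_2)}$), the $\|\textbf{x}^k\|^2/N$ term cancels and one obtains
\[
G^{(\J_1)}_{(\J_2), kk} = \frac{-1}{\theta \bigl(1 + (\textbf{x}^k)^* \mathcal{G}^{(\J_1 \cup \{k\})}_{(\J_2)} \textbf{x}^k / N \bigr)}.
\]
Since $\textbf{x}^k$ is independent of $\mathcal{G}^{(\J_1 \cup \{k\})}_{(\J_2)}$ and has i.i.d.\ entries of mean zero and variance one, the conditional expectation of the quadratic form equals $\tfrac{1}{N}\Tr \mathcal{G}^{(\J_1 \cup \{k\})}_{(\J_2)}$, with fluctuation precisely the $\Upsilon^{(\J_1 \cup \{k\})}_{(\J_2)}$ of \eqref{d:Upsilon}. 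Writing $\tfrac{1}{N}\Tr \mathcal{G}^{(\J_1 \cup \{k\})}_{(\J_2)} = \Delta^{(\J_1)}_{N,(\J_2)} + T_k$ with $T_k$ as in \eqref{d:Tau} and averaging $G^{(\J_1)}_{(\J_2),kk}$ over $k = 1,\dots,N$ gives the first identity.

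To bound $T_k$ I would split
\[
T_k = \tfrac{1}{N}\bigl(\Tr \mathcal{G}^{(\J_1 \cup \{k\})}_{(\J_2)} - \Tr G^{(\J_1 \cup \{k\})}_{(\J_2)}\bigr) + \tfrac{1}{N}\bigl(\Tr G^{(\J_1 \cup \{k\})}_{(\J_2)} - \Tr G^{(\J_1)}_{(\J_2)}\bigr).
\]
The first piece equals $\bigl(|\J_1|+1 - |\J_2|\bigr)/(N\theta)$ by \eqref{e:traces}, bounded in absolute value by $(|\,|\J_1|-|\J_2|\,|+1)/(N\eta)$ since $|\theta| \geq \eta$; the second is estimated by $1/(N\eta)$ via eigenvalue interlacing (a rank-one column removal gives $|\Tr(A-\theta)^{-1} - \Tr(B-\theta)^{-1}| \leq C/\eta$ by the standard telescoping argument using $|(\lambda-\theta)^{-1}| \leq \eta^{-1}$). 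Together these yield \eqref{e:boundsT12}.

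The second equality follows from the dual argument applied to $X X^* - \theta$: Schur's formula around the $k$th row of $X$ together with $X^* \mathcal{G} X = I + \theta G$ produces $\mathcal{G}^{(\J_1)}_{(\J_2), kk} = -\bigl[\theta \bigl(1 + \textbf{x}_k G^{(\J_1)}_{(\J_2 \cup \{k\})} \textbf{x}_k^*/N\bigr)\bigr]^{-1}$ with fluctuation $Y^{(\J_1)}_{(\J_2 \cup \{k\})}$ as in \eqref{d:Upsilon}. Summing over $k$ and converting $\tfrac{1}{N}\sum_k \mathcal{G}^{(\J_1)}_{(\J_2), kk} = \tfrac{1}{N}\Tr \mathcal{G}^{(\J_1)}_{(\J_2)}$ back to $\Delta^{(\J_1)}_{N,(\J_2)}$ via \eqref{e:traces} yields the second form, with $\mathcal{T}_k$ satisfying the same bound by symmetry. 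The argument is entirely routine; the only real bookkeeping burden is tracking the $\mathcal{G}\leftrightarrow G$ rectangular corrections as one passes between minors, which is exactly what produces the $+1$ in the stated estimate on $T_k, \mathcal{T}_k$.
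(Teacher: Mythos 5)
Your proposal is correct and follows essentially the same route as the paper: the Schur complement for $G^{(\J_1)}_{(\J_2),kk}$ combined with the identity $X(X^*X-\theta)^{-1}X^* = I + \theta(XX^*-\theta)^{-1}$, the centering of the quadratic form to extract $\Upsilon$ (resp.\ $Y$), and the trace identity \eqref{e:traces} to account for the $\mathcal{G}\leftrightarrow G$ discrepancy. The only substantive difference is in bounding $\frac{1}{N}\bigl|\Tr G^{(\J_1\cup\{k\})}_{(\J_2)} - \Tr G^{(\J_1)}_{(\J_2)}\bigr|$: the paper computes this exactly as $-(G^2)_{kk}/(NG_{kk})$ via \eqref{e:Gij} and applies \eqref{e:GsImG2}, whereas you invoke the interlacing bound $|\Tr(A-\theta)^{-1}-\Tr(B-\theta)^{-1}|\leq C/\eta$ --- which is a standard and correct fact, though the justification requires integrating the difference of counting functions against $|x-\theta|^{-2}$ rather than the naive term-by-term bound your parenthetical suggests, and it yields the bound only up to a constant rather than the paper's exact $1/(N\eta)$ (immaterial here, since \eqref{e:boundsT12} is only ever used up to constants).
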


\begin{proof}

By the definition of $\Delta_N$ and from the formula

\begin{multline}\label{e:Gkk}
G_{(\J_2), kk}^{(\J_1)} = \frac{1}{\frac{|\xx^k|^2}{N} -\theta - (\xx^k)^* X_{(\J_2)}^{(\J_1\cup \{k\})}\left((X_{(\J_2)}^{(\J_1\cup \{k\})})^*X_{(\J_2)}^{(\J_1)\cup \{k\}}-\theta\right)^{-1}(X_{(\J_2)}^{(\J_1\cup \{k\})})^*\xx^k}
\\
= -\frac{1}{\theta \left(1+  (\xx^k/\sqrt N)^* \mathcal{G}_{(\J_2)}^{(\J_1\cup \{k\})}
\xx^k/\sqrt N \right)} = -\frac{1}{\theta\left(1+\Delta_{N, (\J_2)}^{(\J_1)} + T_k + \Upsilon_{(\J_2)}^{(\J_1\cup \{k\})} \right)}
\end{multline}
we obtain that
\beq
\Delta_{N, (\J_2)}^{(\J_1)}
= - \frac{1}{N} \sum_{k=1}^N
\frac{1}{\theta\left(1+ \Delta_{N, (\J_2)}^{(\J_1)} + T_k + \Upsilon_{(\J_2)}^{(\J_1\cup \{k\})} \right)
}
\eeq
where $\Upsilon_{(\J_2)}^{(\J_1\cup \{k\})}$ is as in \eqref{d:Upsilon}
and
\begin{equation}\label{d:Tk}\begin{split}
T_k &= \frac{1}{N} \Tr(X_{(\J_2)}^{(\J_1\cup \{k\})}(X_{(\J_2)}^{(\J_1\cup \{k\})})^* - \theta)^{-1} - \frac{1}{N}\Tr((X_{(\J_2)}^{(\J_1)})^*(X_{(\J_2)}^{(\J_1)}) - \theta)^{-1}
\\
  &= -\frac{|\J_1|-|\J_2|}{N\theta} + \frac{1}{N} \Tr((X_{(\J_2)}^{(\J_1\cup \{k\})})^*X_{(\J_2)}^{(\J_1\cup \{k\})} - \theta)^{-1} - \frac{1}{N}\Tr((X_{(\J_2)}^{(\J_1)})^*X_{(\J_2)}^{(\J_1)} - \theta)^{-1}.
\end{split}\end{equation}
Rewriting, we obtain
\begin{equation}\label{e:minorcalc}\begin{split}
&T_k
= -\frac{|\J_1|-|\J_2|}{N\theta} +\frac{1}{N} \left ( \sum \limits_{i \neq k} G_{ii}^{(k)} - \sum \limits_{i=1}^N G_{ii} \right )
= -\frac{|\J_1|-|\J_2|}{N\theta} +\frac{1}{N} \left( \sum \limits_{i \neq k} \left ( G_{ii} - \frac{G_{ik}G_{ki}}{G_{kk}} \right) - \sum \limits_{i=1}^N G_{ii} \right )
\\
&= -\frac{|\J_1|-|\J_2|}{N\theta} -\frac{1}{N} \left( \sum \limits_{i\neq k} \frac{G_{ik}G_{ki}}{G_{kk}} -G_{kk} \right)
= -\frac{|\J_1|-|\J_2|}{N\theta} -\frac{1}{N} \frac{1}{G_{kk}}\sum \limits_{i=1}^N G_{ik}G_{ki} = -\frac{|\J_1|-|\J_2|}{N\theta} -\frac{(G^2)_{kk}}{NG_{kk}}.
\end{split}\end{equation}
We now use \eqref{e:GsImG} to obtain
\begin{equation}\label{e:minor}
|T_k| \leq \frac{|\,|\J_1|-|\J_2|\,|}{N|\theta|} + \frac{\Im G_{kk}}{|G_{kk}| N \eta}
\end{equation}
yielding that
\[
|\sqrt \theta| \, |T_k| \leq \frac{(|\,|\J_1|-|\J_2|\,|+1)|\sqrt \theta|}{N \eta}.
\]

Note also \begin{equation}\label{e:curlyGkk}
\mathcal{G}_{(\J_2), kk}^{(\J_1)}
= -\frac{1}{\theta \left(1+  (\textbf{x}_k/\sqrt{N}) \left((X_{(\J_2 \cup \{k\})}^{(\J_1)})^*X_{(\J_2 \cup \{k\})}^{(\J_1)}-\theta\right)^{-1}
\textbf{x}_k^*/\sqrt{N} \right)}
\end{equation}
which similarly yields the second part of \eqref{e:eqDeltaN}, recalling \eqref{e:traces}.
\end{proof}

Rewriting \eqref{e:eqDeltaN} using $\frac{1}{A + \epsilon} = \frac{1}{A} - \frac{\epsilon}{A (A+\epsilon)}$
 we obtain (also for any $\J_1, \J_2$, thus we suppress them here)
\beq\begin{split}
\Delta_N
&
= - \frac{1}{N} \sum_{k=1}^N
\frac{1}{\theta(1+ \Delta) + \theta \Lambda  + {\theta}(T_k + \Upsilon^{(\{k\})})
}
\\
&
=-  \frac{1}{\theta (1+\Delta)} - \frac{1}{N} \sum_{k=1}^N
\frac{1}{\theta (1+\Delta)}\,\frac{ \theta \Lambda  + {\theta}(T_k + \Upsilon^{(\{k\})})}{\theta\left(1+\Delta_N + (T_k + \Upsilon^{(\{k\})})\right)}
\\
&
= \Delta - \frac{\Delta}{N} \sum_{k=1}^N\theta \Lambda G_{kk}  + - \frac{\Delta}{N} \sum_{k=1}^N G_{kk} {\theta}(T_k + \Upsilon^{(\{k\})})
\\
&
= \Delta - \Delta \theta \Lambda \Delta_N  + - \frac{\Delta}{N} \sum_{k=1}^N G_{kk} {\theta}(T_k + \Upsilon^{(\{k\})})
\end{split}\end{equation}
This yields that
\begin{equation}
\Lambda =
 \theta \Delta\Lambda (\Delta + \Lambda)  +  \frac{\Delta}{N} \sum_{k=1}^N G_{kk}{\theta}(T_k + \Upsilon^{(\{k\})})
\end{equation}
Let
\begin{align}
R &:= N^{-1} \sum_{k=1}^N G_{kk} (T_k + \Upsilon^{(\{k\})})
\end{align}
and (similarly can define $R^{(\J_1)}_{(\J_2)}$) which yields the following quadratic for $\Lambda$
\beq \label{e:quadlambda}
\theta \Delta \Lambda^2 + (\theta \Delta^2 - 1)\Lambda + \Delta \theta R = 0.
\eeq
Dividing by $\theta \Delta$, using that $\theta\Delta = -\frac{1}{1+\Delta}$ and the quadratic formula, yields
\beq
 -(\Delta + 1/2) \pm \sqrt{(\Delta + 1/2)^2 - R}
\eeq
as two solutions. From definition of $\Lambda$ in \eqref{d:LambdaDelta} it follows that $\Im \Lambda > \Im \Delta$, thus if we take the branch cut of the square root to be on the positive reals so that the imaginary part of the square root is always positive, we obtain that
\begin{equation}
\Lambda =  -(\Delta + 1/2) + \sqrt{(\Delta + 1/2)^2 - R}
\end{equation}
We also notice that the second solution, call it $\tilde{\Lambda}$, to  \eqref{e:quadlambda} is given by
\beq\label{d:tildeLambda}
\tilde{\Lambda} = -\Lambda - 2\Delta - 1.
\eeq
\medskip

The following proposition is analogous to Proposition 2.2 of \cite{cacciapuoti2015bounds}.
\begin{proposition} \label{b-R}
Let $\theta=E+i\eta$. There exists a constant $C>0$, such that:

\begin{equation} \label{e:Lambdabound}
|\Lambda| \leq C \min \left \{ \frac{|R|}{|\Delta + \frac{1}{2}|} , \sqrt{|R|} \right \} ,
\end{equation}

for all $(E, \eta) \in S_{\eta, E} $ as well as for any $E < 0$. Furthermore, for any $E\in \R$ and $\eta > 0$ we have that
\begin{equation}\label{e:imLambdabound}
|\Im \Lambda| < C\min \left\{ \frac{|R|}{|\Delta + \frac{1}{2}|} , \sqrt{|R|} \right \}
\end{equation}
and
\begin{equation}\label{e:twosolutionsbound}
\min\{|\Lambda|, |\tilde{\Lambda}|\} \leq C\sqrt{|R|}.
\end{equation}
Analogous statements hold for $\Lambda_{(\J_2)}^{(\J_1)}$ with $R_{(\J_2)}^{(\J_1)}$.
\end{proposition}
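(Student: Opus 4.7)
The plan is to exploit the quadratic relation \eqref{e:quadlambda} via Vieta's formulas. With $A := \Delta + 1/2$ and $w := \sqrt{A^2 - R}$ (branch fixed by $\Im w > 0$), we have $\Lambda = -A + w$, $\tilde{\Lambda} = -A - w$, and thus $\Lambda \tilde{\Lambda} = R$ and $\Lambda + \tilde{\Lambda} = -2A$. The unconditional inequalities $\max(|\Lambda|,|\tilde{\Lambda}|) \geq \sqrt{|R|}$ (AM--GM on the product) and $\max(|\Lambda|,|\tilde{\Lambda}|) \geq |A|$ (triangle inequality on the sum) combine with $\min \cdot \max = |R|$ to yield
\[
\min(|\Lambda|,|\tilde{\Lambda}|) \leq |R|/\max(|A|,\sqrt{|R|}) = \min(|R|/|A|, \sqrt{|R|}),
\]
which proves \eqref{e:twosolutionsbound} (and in fact the stronger statement with the minimum).

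For \eqref{e:imLambdabound}, the key observation is that $\Im \tilde{\Lambda} = -(\Im A + \Im w) < 0$, so $|\Im \tilde{\Lambda}| = \Im A + \Im w$, while $|\Im \Lambda| = |\Im w - \Im A| \leq \Im A + \Im w = |\Im \tilde{\Lambda}| \leq |\tilde{\Lambda}|$. Together with the trivial $|\Im \Lambda| \leq |\Lambda|$ this yields $|\Im \Lambda| \leq \min(|\Lambda|,|\tilde{\Lambda}|)$ for all $E \in \R$, $\eta > 0$, and \eqref{e:imLambdabound} then follows from the first paragraph without any restriction on the domain.

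For \eqref{e:Lambdabound} one must show that $\Lambda$ is, up to a constant, the smaller root---equivalently $|A + w| \geq c\max(|A|,\sqrt{|R|})$. In the small-$R$ regime $|R| \leq \varepsilon |A|^2$, the Taylor expansion $w = A - R/(2A) + O(|R|^2/|A|^3)$ (the $+A$ branch being singled out by $\Im w, \Im A > 0$) gives $|A+w| \geq (2 - O(\varepsilon))|A|$; in the large-$R$ regime $|R| \geq K|A|^2$ with $K$ large, $|w|^2 = |A^2 - R| \geq (1-1/K)|R|$ together with $|A| \leq \sqrt{|R|/K}$ gives $|A+w| \geq |w| - |A| \geq c\sqrt{|R|}$ via the triangle inequality.

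The main obstacle is the intermediate regime $|R| \asymp |A|^2$, where $|A|$, $|w|$, $\sqrt{|R|}$ are all comparable and $A + w$ may in principle cancel. The set $S_{E,\eta}$ is engineered precisely to preclude this: $\Im(A^2) \geq c\Re(A^2)$ places $\arg A$ in a compact sub-interval of $(0, \pi)$ (for $c = 1$, in $[\pi/8, 5\pi/8]$), so that combined with $\arg w \in (0,\pi)$ one gets $|\arg A - \arg w| \leq 7\pi/8$; then $|A+w|^2 = |A|^2 + |w|^2 + 2\Re(A\bar w) \geq (1 - \cos(\pi/8))(|A|^2 + |w|^2)$, and combined with $\max(|A|, |w|) \geq \sqrt{|R|/2}$ (from $|w|^2 = |A^2 - R|$) this produces $|A + w| \geq c \max(|A|, \sqrt{|R|})$ uniformly. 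For $E < 0$ the geometry is different---$\Re A \geq 1/2$ and $\arg A \in (0, \pi/4)$---and one identifies $\Lambda$ with the smaller root by exploiting the physical constraints $\Re \Delta_N \geq 0$, $\Im \Delta_N > 0$ (since $\Delta_N$ is the Stieltjes transform of a measure on $[0,\infty)$) via a continuity argument in $\eta$ starting from $\eta \gg 1$, where $\Lambda$ vanishes and $\tilde{\Lambda}$ is bounded away from zero, using that the two roots only coincide on the codimension-two set $\{R = A^2\}$. The extension to $\Lambda^{(\J_1)}_{(\J_2)}$ is immediate via the analogous quadratic identity for each submatrix.
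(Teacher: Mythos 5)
Your proofs of \eqref{e:twosolutionsbound} and \eqref{e:imLambdabound} via Vieta's formulas ($\Lambda\tilde{\Lambda}=R$, $\Lambda+\tilde{\Lambda}=-2A$) are correct and in fact sharper than stated, and your sector argument on $S_{E,\eta}$ --- $\arg A$ confined to a compact subinterval of $(0,\pi)$, $\arg w\in(0,\pi)$, hence $|A+w|^2\geq (1-\cos(\pi/8))(|A|^2+|w|^2)\geq c\max(|A|^2,|R|)$ --- gives \eqref{e:Lambdabound} on $S_{E,\eta}$ uniformly in $|R|/|A|^2$, so your small-$R$/large-$R$ case split there is actually redundant. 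This is a genuinely self-contained route: the paper instead imports (2.16)--(2.18) of Cacciapuoti--Maltsev--Schlein as a black box for these three bounds.

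The gap is in the case $E<0$. First, your small-$R$ Taylor expansion claims the $+A$ branch is ``singled out by $\Im w,\Im A>0$''; this is false when $\arg A$ is close to $0$, which is exactly the $E<0$ geometry: if $\Im(A^2-R)<0$ (possible when $\Im A$ is small compared to $|R|/|A|$) the principal root lands near $-A$, and nothing in the branch convention alone prevents this. Second, the continuity-in-$\eta$ argument does not close this: (i) the labels $\Lambda,\tilde{\Lambda}$ can exchange \emph{relative magnitude} along the path without the roots ever colliding, so avoiding $\{R=A^2\}$ proves nothing about which root is smaller at the endpoint; and (ii) ``codimension two'' is not an argument for a deterministic one-parameter path --- $\{\eta: R(\eta)=A(\eta)^2\}$ is a single complex equation in one real variable and need not be empty. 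The correct use of the physical constraint you mention is direct and requires no continuity: from $\Lambda-\tilde{\Lambda}=2w$ and $\tilde{\Lambda}=-\Lambda-2\Delta-1$ one gets $\Re w=\Re\Delta_N+\tfrac12>\tfrac12$ for $E<0$ (each summand of $\Delta_N$ has positive real part there), and likewise $\Re A=\Re\Delta+\tfrac12>0$. With all four of $\Re A,\Re w,\Im A,\Im w$ positive, $|A+w|^2\geq|A|^2+|w|^2\geq\max(|A|^2,|A^2-w^2|)=\max(|A|^2,|R|)$, which is the estimate your sector argument delivers on $S_{E,\eta}$ and which finishes \eqref{e:Lambdabound} for $E<0$. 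This is essentially the paper's own treatment of that case.
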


\begin{proof}
To show \eqref{e:Lambdabound}, we apply (2.17) of \cite{cacciapuoti2015bounds} with $a= (\Delta + \frac{1}{2})^2 $ and b = $ - R $. Since $\Im \Delta >0$, with our choice of branch cut we have that $\sqrt{(\Delta+1/2)^2} = \Delta+1/2$, and we recall that we defined $S_{\eta, E}$ in \eqref{e:setae} to be exactly the set where $\Im (\Delta + 1/2)^2 \geq c\Re ((\Delta + 1/2)^2)$ for some $c>0$. Note that \eqref{e:imLambdabound} follows directly from (2.18) of \cite{cacciapuoti2015bounds}, while the proof of \eqref{e:twosolutionsbound} is identical to the proof of (2.16) in \cite{cacciapuoti2015bounds}.

\medskip
Recalling that $\Delta_N(z) = \frac{1}{N}\sum_\alpha \frac{1}{s_\alpha - E - i\eta}$ and noting that for $E < 0$ the real part of each summand is positive we conclude that $\Re \Delta_N > 0$ for $E < 0$, and similar to our argument about the imaginary part of $\Lambda$, we see from \eqref{d:LambdaDelta} that $\Re \Lambda > -\Re \Delta$ while from \eqref{d:tildeLambda} we see that $\Re \tilde \Lambda < -\Re \Delta - 1$. Since we have that
\begin{align*}
\Re \Lambda &= -\Re (\Delta+1/2) + \Re\left(\sqrt{(\Delta + 1/2)^2 - R}\right)\\
\Re \tilde\Lambda &= -\Re (\Delta+1/2) - \Re\left(\sqrt{(\Delta + 1/2)^2 - R}\right)
\end{align*}
we see that $\Re\left(\sqrt{(\Delta + 1/2)^2 - R}\right) > 0$ and thus $|\Re \Lambda| < |\Re \tilde \Lambda|$ and thus one part of \eqref{e:Lambdabound} follows from \eqref{e:twosolutionsbound}. For the other part of \eqref{e:Lambdabound}, we estimate that
\begin{equation}
|\Lambda| = \left|\frac{R}{\sqrt{(\Delta + 1/2)^2 - R} + (\Delta + 1/2)}\right|\leq \left|\frac{R}{\Delta + 1/2}\right|
\end{equation}
where the last inequality follows since both real and imaginary parts of both summands in the denominator are positive. The fact that $\Re(\Delta+\frac{1}{2})\geq 0$ comes from the definition of our spectral domain, namely by the constraint $E^2+\eta^2-4E\leq 4\eta$.

\end{proof}

\section{Bounds on quadratic forms}

Here we obtain the necessary bounds on quadratic forms.
\begin{lemma}\label{l:Upsilonbound}
Let $\mathcal{G} = \mathcal{G}^{(\J_1)}_{(\J_2)}$ or $G^{(\J_1)}_{(\J_2)}$ for some $\J_1, \J_2$. Let
$ \Upsilon := \frac{1}{N}(\mathbb{I} - \E_{\xx} ) \mathbf{\xx}^* \mathcal{G} \mathbf{\xx} $, assuming \eqref{a:4moment}, \eqref{a:truncation} for elements of $\xx$. Then we have that
\begin{equation}\label{e:upsilonbound1} \E|\Upsilon|^{2q} \leq (Cq)^{cq}\left(\frac{\E \left (\Im \Tr \mathcal{G} \right )^{q}}{N^q(N\eta)^q} +   \frac{\E|\mathcal{G}_{11}|^{2q}}{N^q}+    \frac{\E|\mathcal{G}_{11}|^{q}}{(N\eta)^q}\right). \end{equation}   %
 Moreover, we have a more precise inequality
\begin{equation}\label{e:upsilonbound2} \E|\Upsilon|^{2q} \leq \left(\frac{Cq}{N\eta}\right)^{cq}\left(\E \left (\frac{\Im \Tr \mathcal{G} }{N}\right )^{q} +   \E\left| \frac{ \mathcal{G}_{11} }{\sqrt N}\right|^{q}\right) + \frac{(Cq)^{cq}\E (|\mathcal{G} _{12}|^{2q}+|\mathcal{G}_{11}|^{2q})}{N^q}. \end{equation}
\end{lemma}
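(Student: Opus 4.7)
The plan is to split $\Upsilon$ into diagonal and off-diagonal pieces and handle each separately with standard independence-based moment inequalities, then convert the resulting Hilbert--Schmidt sums into quantities involving $\Im\Tr\mathcal{G}$ and $\mathcal{G}_{11}$ via the identity $(\mathcal{G}\mathcal{G}^*)_{jj}=\Im\mathcal{G}_{jj}/\eta$. Because $\E|x_i|^2=1$ and $\xx$ is independent of $\mathcal{G}$ (by the minor construction appearing in the definition of $\Upsilon$), I write
\begin{equation*}
\Upsilon \;=\; \frac{1}{N}\sum_{i=1}^N(|x_i|^2-1)\mathcal{G}_{ii} \;+\; \frac{1}{N}\sum_{i\neq j}\bar x_i\mathcal{G}_{ij}x_j \;=:\; A + B,
\end{equation*}
and work conditionally on $\mathcal{G}$.

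For $A$, the summands $Y_i:=(|x_i|^2-1)\mathcal{G}_{ii}$ are conditionally independent with mean zero, so Rosenthal's inequality gives
\begin{equation*}
\E_{\xx}|A|^{2q}\;\leq\;\frac{(Cq)^{cq}}{N^{2q}}\left[\Bigl(\sum_i\E_{\xx}|Y_i|^2\Bigr)^q+\sum_i\E_{\xx}|Y_i|^{2q}\right].
\end{equation*}
Using the truncation $|x_i|\leq DN^{1/4}$ to control $\E(|x_i|^2-1)^{2q}\leq C^qN^{q-1}$, and then H\"older together with the permutation symmetry $\E|\mathcal{G}_{ii}|^{2q}=\E|\mathcal{G}_{11}|^{2q}$, both summands collapse to the middle target $(Cq)^{cq}\E|\mathcal{G}_{11}|^{2q}/N^q$ of \eqref{e:upsilonbound1}.

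For $B$, I expand $\E|B|^{2q}$ as a sum over $2q$-fold products of $\bar x_{i_k}x_{j_k}$ and use independence to restrict to index patterns in which every $x$-factor is paired. The ``generic'' trace-type pairings produce the factor $\sum_{i\neq j}|\mathcal{G}_{ij}|^2$, and the identity $\sum_j|\mathcal{G}_{ij}|^2=\Im\mathcal{G}_{ii}/\eta$ then gives the first term of \eqref{e:upsilonbound1}:
\begin{equation*}
\frac{(Cq)^{cq}}{N^{2q}}\,\E\Bigl(\sum_{i\neq j}|\mathcal{G}_{ij}|^2\Bigr)^q\;\leq\;\frac{(Cq)^{cq}}{N^q(N\eta)^q}\,\E(\Im\Tr\mathcal{G})^q.
\end{equation*}
Non-generic pairings, in which some index appears in more than two factors, are controlled using $|x_i|^2\leq DN^{1/2}$ together with $\mu_4<\infty$; they reduce, by $\sum_i|\mathcal{G}_{ij}|^2=\Im\mathcal{G}_{jj}/\eta$ and permutation symmetry, to the third term $(Cq)^{cq}\E|\mathcal{G}_{11}|^q/(N\eta)^q$. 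For the sharper inequality \eqref{e:upsilonbound2} the same expansion is carried out while tracking the gain more precisely: each matched pair contributes a factor $1/(N\eta)$ rather than $1/N$ (through an extra $\sum_j|\mathcal{G}_{ij}|^2/N \leq \Im\Tr\mathcal{G}/(N^2\eta)$ estimate traded against one truncation step), producing the leading prefactor $(Cq/(N\eta))^{cq}$, while the rare pairings concentrated on a single diagonal or single off-diagonal entry produce the terminal $\E|\mathcal{G}_{11}|^{2q}/N^q$ and $\E|\mathcal{G}_{12}|^{2q}/N^q$ contributions.

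The main obstacle is the combinatorial bookkeeping for $\E|B|^{2q}$: one must enumerate all pair-matchings of $4q$ indices, identify which correspond to the leading trace term and which to the various lower-order corrections, and count the number of factors of $1/(N\eta)$ gained at each coincidence, all while avoiding logarithmic losses. A Burkholder/martingale decomposition with respect to the filtration generated by $x_1,\ldots,x_N$ offers an alternative, slicker route that makes the ``gain per matched pair'' structure transparent and reduces matters to estimating the predictable quadratic variation.
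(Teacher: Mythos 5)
Your treatment of the diagonal part coincides with the paper's: the same split $\Upsilon=\epsilon_1+\epsilon_2$, Rosenthal's inequality (Lemma \ref{l:rosenthal}) for $\epsilon_1$, and the truncation bound \eqref{e:bound4thmoment} to absorb $\E(|x_i|^2-1)^{2q}$, yielding the $\E|\mathcal{G}_{11}|^{2q}/N^q$ term. The gap is in the off-diagonal part, and it is exactly the piece you defer as ``combinatorial bookkeeping.'' That bookkeeping is not a routine afterthought; it is the entire technical content of the lemma. With $q$ allowed to grow like a power of $N$ and entries having only a fourth moment plus truncation at $N^{1/4}$, the non-generic index patterns in a direct expansion of $\E|\epsilon_2|^{2q}$ carry moment factors $\E|x|^{2k}\lesssim N^{(k-2)/2}$ that must be balanced, pattern by pattern, against the loss of free summation indices and against iterated off-diagonal sums of resolvent entries. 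Your assertion that these terms ``reduce, by $\sum_i|\mathcal{G}_{ij}|^2=\Im\mathcal{G}_{jj}/\eta$ and permutation symmetry, to the third term'' is precisely what has to be proved, and the paper does it not by enumerating pairings but by the G\"otze--Tikhomirov recursion: Burkholder (Lemma \ref{l:burkholder}) produces the quadratic forms $Q_0,\widehat Q_0$ in \eqref{d:Q0}, and an induction of depth $L=\log_2 q$ on the quantities $Q_\nu, Q_{\nu1},Q_{\nu2},Q_{\nu3}$ of \eqref{d:Qs}, built on the recursively defined coefficients $a^{(\nu)}_{lj}$ and the key deterministic bound \eqref{e:arr}, halves the exponent at each step until the power is $1$ and \eqref{e:Q0bound} follows. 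Nothing in your sketch substitutes for \eqref{e:arr} or for the control \eqref{e:arjhighpower} of high powers of the iterated coefficients.

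Two further specific points. First, your explanation of the refined bound \eqref{e:upsilonbound2} --- ``each matched pair contributes a factor $1/(N\eta)$ rather than $1/N$ \ldots traded against one truncation step'' --- does not identify where the $\E|\mathcal{G}_{12}|^{2q}/N^q$ term actually comes from; in the paper it is the second Rosenthal term in \eqref{e:firststep}, namely $N^{-3q/2-1}\sum_{j,k}\E|x|^{2q}\E|\mathcal{G}_{jk}|^{2q}$, which one either leaves as is (giving \eqref{e:upsilonbound2}) or further bounds via \eqref{e:boundoffdiag} (giving \eqref{e:upsilonbound1}); the two stated inequalities differ only in this choice plus how far one simplifies \eqref{e:Q0bound}. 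Second, you correctly note at the end that a Burkholder/martingale decomposition is the ``slicker route'' --- it is in fact the paper's route, and in this setting it is not optional: the martingale structure is what organizes the coincidence patterns so that the constant stays at $(Cq)^{cq}$ and the fourth-moment-plus-truncation hypothesis suffices. As written, your proposal establishes the easy half and postpones the hard half, so it does not yet constitute a proof.
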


\begin{proof}
We start by the decomposition:
\begin{equation*}
\Upsilon = \frac{1}{N} \sum \limits_{j\neq l} \overline{x_{j}} x_{l} \mathcal{G}_{jl}  + \frac{1}{N} \sum \limits_{j} (|x_{jk}|^2-1) \mathcal{G} _{jj} =\epsilon_{2} + \epsilon_{1},
\end{equation*}
where \begin{equation}\label{d:epsilonk12}\epsilon_{2} := \frac{1}{N} \sum \limits_{j\neq l} \overline{x_{j}} x_{l} \mathcal{G}_{jl}  \text{ and }
 \epsilon_{1} := \frac{1}{N} \sum \limits_{j} (|x_{j}|^2-1) \mathcal{G} _{jj}.\end{equation}

We use Rosenthal's inequality (Lemma \ref{l:rosenthal}) to obtain:
\begin{equation}\label{e:Rosenthal} \E | \epsilon_{1} |^{2q} \leq (Cq)^{2q} N^{-2q}  \left [  \sum \limits_{j} \E | x_{j} |^{4q}\E | \mathcal{G}_{jj}  |^{2q} + \left ( \mu_4 \sum \limits_{j}  \E |\mathcal{G}_{jj}  |^2 \right )^q \  \right ] .\end{equation}
We notice that
\begin{equation}\label{e:bound4thmoment}
|x_{l}| \leq D N^{1/4} \Rightarrow \E |x_{l}|^{4q} \leq D^{4q-4} N^{q-1}\mu_4,
\end{equation}
which yields that
$$ \E | \epsilon_{1} |^{2q} \leq (Cq)^{2q} N^{-q} \E|\mathcal{G}_{jj} |^{2q} .$$
For $\epsilon_{2}$ we will systematically use both Burkholder's and Rosenthal's inequalities, expressed for complex random variables in the Appendix Lemmas \ref{l:rosenthal} and \ref{l:burkholder}. Using Burkholder's Inequality we obtain
\begin{multline}
\E | \epsilon_{2} |^{2q}
\leq
 N^{-2q} (C_1q)^{2q} \left ( \E \left [ \sum \limits_{j=2}^N \left | \sum_{1\leq k\leq j-1}  {x_{k}} \mathcal{G}_{jk} \right |^2 \right ]^{q} + \max \limits_{k} \E | x_{k}|^{2q} \sum \limits_{j=2}^N \E \left | \sum_{1\leq k \leq j-1} x_{k} \mathcal{G}_{jk}  \right |^{2q} \right )\\
 + N^{-2q} (C_1q)^{2q} \left ( \E \left [ \sum \limits_{j=2}^N \left | \sum_{1\leq k\leq j-1}  {x_{k}} \mathcal{G}_{kj} \right |^2 \right ]^{q} + \max \limits_{k} \E | x_{k}|^{2q} \sum \limits_{j=2}^N \E \left | \sum_{1\leq k \leq j-1} x_{k} \mathcal{G}_{kj}  \right |^{2q} \right )
\end{multline}
We define the quantities:
\begin{equation}\label{d:Q0}
Q_0  := \sum \limits_{j=2}^N \left | \sum_{1\leq k\leq j-1}  {x_{k}} \mathcal{G}_{jk}/\sqrt N \right |^2 \quad\text{and}\quad \widehat{Q}_0  := \sum \limits_{j=2}^N \left | \sum_{1\leq k\leq j-1}  {x_{k}} \mathcal{G}_{kj}/\sqrt N  \right |^2 \end{equation}
The difficult part of the proof will be to bound expectations of powers of this quantity. For the other terms we apply Rosenthal's inequality and \eqref{e:bound4thmoment} getting that:
\begin{multline}\label{e:firststep}
\E | \epsilon_{2}|^{2q} \leq (Cq)^{2q} N^{-q} (\E |Q_0 |^q+\E |\widehat{Q}_0 |^q)
\\
+ (Cq)^{4q} N^{-\frac{3q}{2}-1}
\left [  \sum \limits_{j=2}^N  \sum_{1\leq k\leq j-1} \E | x_{l} |^{2q} (\E | \mathcal{G}_{jk}  |^{2q}+\E | \mathcal{G}_{kj}  |^{2q}) + \sum \limits_{j=2}^N \E\left\{\left ( \sum_{1\leq k\leq j-1}  | \mathcal{G}_{jk}  |^2  \right )^q + \left ( \sum_{1\leq k\leq j-1}  | \mathcal{G}_{kj}  |^2  \right )^q \right\}\right ]
 \end{multline}
For the middle term we observe that
\begin{equation}\label{e:boundoffdiag}
|\mathcal{G}_{lj} | \leq \frac{1}{2}\sqrt{\frac{\Im\mathcal{G}_{ll} }{\eta}}+ \frac{1}{2}\sqrt{\frac{\Im\mathcal{G}_{jj} }{\eta}}
\end{equation}
which can be obtained as follows. Let $u_j$ be the $j$th normalized eigenvector of $\mathcal{G} $ and $\lambda_0= 0$. Then
\begin{multline}
|\mathcal{G}_{lj} |= \left|\sum_{q=0}^N \frac{u_{lq}u_{qj}}{\lambda_q - z}\right| \leq \sum_{q=0}^N \frac{|u_{lq}u_{qj}|}{|\lambda_q - z|}
\leq \frac{1}{2}\sum_{q=0}^N \frac{|u_{lq}|^2 + |u_{qj}|^2}{|\lambda_q - z|} \leq \frac{1}{2}\sqrt{\sum_{q=0}^N \frac{|u_{lq}|^2} {|\lambda_q - z|^2}} + \frac{1}{2}\sqrt{\sum_{q=0}^N \frac{|u_{qj}|^2} {|\lambda_q - z|^2}}\\
\end{multline}
where in the last step we recall that the eigenvectors are normalized and use Jensen's ineqality. Then \eqref{e:boundoffdiag} follows.
Using for the last term that $\sum_{l=1}^{N} | \mathcal{G}_{jl}  |^2 \leq \eta^{-1} \mathrm{Im} \ \mathcal{G}_{jj}$ we now get that
\begin{equation}\label{e:epsilon2bound}
\E | \epsilon_{2} |^{2q} \leq (Cq)^{2q} N^{-q}   \E |Q_0 |^q + \frac{(Cq)^{4q}}{ (N\eta)^{q}}  \E | \Im\mathcal{G}_{ll} |^{q}
.
\end{equation}

We will now bound the quantity $\E|Q_0 |^q$, and note that $\E|\widehat{Q}_0 |^q$ is similar. We will implement an induction scheme on the quantity $\E|Q_0 |^q$ to gradually decrease its exponent $q$ and finally remove it.
The technique is similar to one in \cite{gotze2016optimal, gotze2014rate} but we expand Rosenthal and Burkholder inequalities to complex entries in the Appendix and improve the bound so that it can be applied to the soft edge.
Similar to \cite{gotze2016optimal} we define the quantities
\begin{multline}\label{d:Qs} Q_\nu  := \sum \limits_{j=2}^N \left | \sum_{1\leq k\leq j-1} x_{j} a_{jk}^{(\nu)} \right |^2,
\quad
 Q_{\nu1}  := \sum \limits_{l=1}^N a_{ll}^{(\nu+1)} ,
 \\
 Q_{\nu2}  := \sum \limits_{l=1}^N \left ( |x_{l}|^2 -1 \right ) a_{ll}^{(\nu+1)}, \quad \text{and} \quad
 Q_{\nu3}  := \sum \limits_{1 \leq l \neq j \leq N} x_{l}\overline{x_{j}} a_{lj}^{(\nu+1)}\\
 ,\end{multline}
where $a_{lj}^{(\nu)}$ are defined recursively via
\begin{equation}\label{d:as}
a_{lj}^{(0)}:= \frac{1}{\sqrt{N}} \mathcal{G}_{lj}  \quad
\text{and}
\quad a_{rl}^{(\nu+1)} := \sum \limits_{j= \max\{r, l\}+1}^{N}  a_{rj}^{(\nu)}\overline{a_{lj}^{(\nu)}}\end{equation}
for $\nu = 0,1,...,L-1, L$ with integer $L$ such that $q = 2^{L}$. From \cite{gotze2016optimal} Lemma 5.1 and Corollaries 5.2 and 5.3, we have the following bounds:
\begin{equation}
\label{e:arr}\max\{|a_{rr}^{(\nu+1)}|,\; \sum_j |a_{jr}^{( \nu)}|^2\}\leq \left(\frac{\Im \Tr \mathcal{G}  }{N\eta}\right)^{2^\nu-1}\, \frac{\Im \mathcal{G}_{rr} }{N\eta}\end{equation}
\medskip
To set up our induction scheme we expand the absolute value square and interchange the order of summations in $Q_\nu $:
\begin{multline}\label{e:decomp}
Q_{\nu}  = \sum \limits_{j=2}^N  \sum_{1 \leq k_1, k_2 \leq j-1}x_{k_1} \overline{x_{k_2}}  a_{k_1j}^{(\nu)}\overline{a_{k_2j}^{(\nu)}}
=
 \sum_{1 \leq k_1, k_2 \leq N-1}x_{k_1} \overline{x_{k_2}} \sum \limits_{j= \max\{k_1, k_2\}+1}^{N}  a_{k_1j}^{(\nu)}\overline{a_{k_2j}^{(\nu)}}
  \\=
   \sum_{1 \leq j_1, j_2 \leq N}x_{j_1} \overline{x_{j_2}}   a_{j_1j_2}^{(\nu+1)}
   = Q_{\nu 1}  + Q_{\nu 2}  + Q_{\nu 3}
\end{multline}
Now taking power $2^{L-\nu}$ and expectation we obtain that
\begin{equation}
\E | Q_{\nu}  |^{2^{L-\nu}} \leq 3^{2^{L-\nu}} \left ( \E | Q_{\nu1}  |^{2^{L-\nu}} + \E | Q_{\nu2}  |^{2^{L-\nu}} + \E | Q_{\nu3}  |^{2^{L-\nu}} \right )\\
\end{equation}

We apply Rosenthal's inequality for the quantity
$ Q_{\nu 2} $ getting that
\begin{multline}
\E | Q_{\nu2}  |^{2^{L-\nu}}
\leq  (Cq)^q \left [\E \left ( \sum \limits_{l=1}^N  | a_{ll}^{(\nu+1)} |^2  \right )^{2^{L-(\nu+1)}} + \sum \limits_{l=1}^N \E | x_{l}^2 -1 |^{2^{L-\nu}} |a_{ll}^{(\nu+1)} |^{2^{L-\nu}} \right ]
\\
\leq (Cq)^q \left [ \E \left ( \sum \limits_{l=1}^N | a_{ll}^{(\nu+1)} |^2 \right )^{2^{L-(\nu+1)}} + N^{2^{L-(\nu+1)}} \frac{1}{N} \sum \limits_{l=1}^N  \E |a_{ll}^{(\nu+1)} |^{2^{L-\nu}} \right ] .
\end{multline}
where we used \eqref{e:bound4thmoment} in the last line. We notice that the first term is bounded above by the second term by Jensen's Inequality, so we obtain that
\begin{equation}
\E | Q_{\nu2}  |^{2^{L-\nu}}\leq (Cq)^q N^{2^{L-(\nu+1)}} \frac{1}{N} \sum \limits_{l=1}^N  \E |a_{ll}^{(\nu+1)} |^{2^{L-\nu}}
\leq Cq^q J_\nu,
\end{equation}
where we use \eqref{e:arr} and introduce the notation
\begin{equation}
J_{\nu}: = N^{-2^{L- (\nu+1)}}\E\left| \frac{\Im \Tr \mathcal{G}  }{N\eta} \right|^{2^L- 2^{L-\nu}}\left|\frac{\Im \mathcal{G}_{11} (z)}{\eta}\right|^{2^{L-\nu}}.
\end{equation}

Now we apply Burkholder's Inequality to $\E|Q_{\nu3}|^q$ we obtain a bound which involves $\E|Q_{\nu+1}|^{q/2}$ and $\E|\widehat{Q}_{\nu+1}|^{q/2}$ as in \eqref{e:firststep}. We use Rosenthal's inequality to bound the other term arising from the application of Burkholder's Inequality:
\begin{multline}
\E|Q_{\nu3}|^{2^{L-\nu}} \leq \E|\sum_{1 \leq j_1, j_2 \leq N, j_1 \neq j_2}x_{j_1} \overline{x_{j_2}}   a_{j_1j_2}^{(\nu+1)}|^{2^{L-\nu}}
\\
\leq
 (Cq)^{q} (\E |Q_{\nu+1} |^{2^{L-(\nu+1)}} + \E |\widehat{Q}_{\nu+1} |^{2^{L-(\nu+1)}})+
 \E|x_1|^{2^{L-\nu}} \sum \limits_{j=2}^n
 \E\left( \left | \sum \limits_{k=1}^{j-1} a_{jk}^{(\nu+1)} x_k \right |^{2^{L-\nu}} + \left | \sum \limits_{k=1}^{j-1} a_{kj}^{(\nu+1)} x_k \right |^{2^{L-\nu}}\right)
 \\
\leq
 (Cq)^{q} (\E |Q_{\nu+1} |^{2^{L-(\nu+1)}} + \E |\widehat{Q}_{\nu+1} |^{2^{L-(\nu+1)}}) + (Cq)^{2q} N^{2^{L-(\nu+2)}} \frac{1}{N} \sum \limits_{j=2}^N \E\left ( \sum_{1\leq l\leq N, l \neq j} | a_{jl}^{(\nu+1)}|^2 \right )^{2^{L-(\nu+1)}}
\\
+ (Cq)^{q}  N^{2^{L-(\nu+1)}} \frac{1}{N^2} \sum \limits_{j=2}^N \sum_{1\leq l\leq N, l \neq j} \E | a_{jl}^{(\nu+1)} |^{2^{L-\nu}}
\end{multline}
The resulting terms are bounded by \eqref{e:arr} and the following argument. By H\"older inequality and above definition, we obtain that
\begin{multline}\label{e:arjhighpower}
\frac{1}{N^2} \sum \limits_{j=2}^N \sum_{1\leq r\leq N, r\neq j}|a_{rj}^{(\nu+1)}|^{2^{L-\nu}}
\leq
\frac{1}{N^2} \sum \limits_{j=2}^N  \sum_{1\leq r\leq N, r\neq j} \left|\sum_{l_1} |a_{r{l_1}}^{(\nu)}|^2 \sum_{l_2} |a_{{l_2}j}^{(\nu)}|^2\right|^{2^{L-(\nu+1)}} \\
 =
  \frac{1}{N^2} \sum \limits_{j=2}^N  \sum_{1\leq r\leq N, r\neq j} \left|a_{rr}^{(\nu)}a_{jj}^{( \nu)}\right|^{2^{L-(\nu+1)}} \leq \left(\frac{1}{N}\sum_j |a_{jj}^{(\nu)}|^{2^{L-(\nu+1)}}\right)^2
   \leq
    \frac{1}{N}\sum_j |a_{jj}^{(\nu)}|^{2^{L-\nu}}
\end{multline}
This yields that
\begin{multline}
\E|Q_{\nu3}|^{2^{L-\nu}}
\leq
(Cq)^{q} (\E |Q_{\nu+1} |^{2^{L-(\nu+1)}} + \E |\widehat{Q}_{\nu+1} |^{2^{L-(\nu+1)}})
\\
+
 (Cq)^{2q} N^{2^{L-(\nu+2)}} \frac{1}{N} \E \left |  \left(\frac{\Im \Tr \mathcal{G}  }{N\eta}\right)^{2^{\nu+1}-1} \frac{\Im \mathcal{G}_{11} }{N\eta} \right |^{2^{L-(\nu+1)}}
+ (Cq)^{q}  N^{2^{L-(\nu+1)}} \E   \left|\left(\frac{\Im \Tr \mathcal{G}  }{N\eta}\right)^{2^{\nu}-1}\, \frac{\Im \mathcal{G}_{11} }{N\eta}
\right|^{2^{L-\nu}}
\\
\leq
(Cq)^{q} (\E |Q_{\nu+1} |^{2^{L-(\nu+1)}} + \E |\widehat{Q}_{\nu+1} |^{2^{L-(\nu+1)}})+
 (Cq)^{2q}(J_{\nu+1} + J_{\nu}).
\end{multline}

\medskip
Same bounds hold for $\widehat{Q}_{\nu}$ with $\widehat{Q}_{\nu1}$ and $\widehat{Q}_{\nu2}$ defined analogously, thus by an induction argument we obtain that
\begin{multline}
\E(|Q_0|^{2^L}+|\widehat{Q}_0|^{2^L})
 \\
 \leq (Cq)^{cq}\left(\E(|Q_{L}|+|\widehat{Q}_{L}|) + \sum_{\nu=0}^{L-1} \left(\E(|Q_{\nu1}|^{2^{L-\nu}}+|\widehat{Q}_{\nu1}|^{2^{L-\nu}})+ \E(|Q_{\nu2}|^{2^{L-\nu}}+|\widehat{Q}_{\nu2}|^{2^{L-\nu}})
 + J_\nu + J_{\nu+1}\right)\right)
 \\
 \leq
  (Cq)^{cq}\left(\E|Q_{L}| + \sum_{\nu=0}^{L-1} \sum_{\nu=0}^{L-1} (\E(|Q_{\nu1}|^{2^{L-\nu}}+|\widehat{Q}_{\nu1}|^{2^{L-\nu}})+ \sum_{\nu=0}^{L}
 J_\nu\right).
\end{multline}

Now, we bound the three terms on the RHS. For $0 \leq \nu \leq L$ we have that
\begin{multline}
J_\nu\leq \eta^{-2^L}\E\left( \frac{\Im \Tr \mathcal{G}  }{N} \right)^{2^L}\left|\frac{\Im \mathcal{G} _{11}(z)}{\sqrt N}\, \frac{N}{\Im \Tr \mathcal{G}  }\right|^{2^{L-\nu}}
\\
\leq \eta^{-2^L}\E\left(\left( \frac{\Im \Tr \mathcal{G}  }{N}\right)^{2^{L}} + \left|\frac{\Im \mathcal{G} _{11}(z)}{\sqrt N}\right|^{2^{L}}\right)
\end{multline}

By the definition of $Q_{\nu1} $ and the definition of the coefficients $a_{ll}^{(\nu+1)}$, we can check that
\beq \label{Q1}
\E | Q_{\nu1}  |^{2^{L-\nu}} \leq \E \left [\frac{\Im \Tr \mathcal{G}  }{N\eta}  \right ]^{2^L} ,
\eeq
and lastly, using \eqref{e:decomp} and \eqref{e:arr} we obtain
\begin{equation}
\E |Q_L| = \E  \sum_{1 \leq j_1, j_2 \leq N}x_{j_1} \overline{x_{j_2}}   a_{j_1j_2}^{(\nu+1)} = \E\sum_j a_{jj}^{(L+1)} \leq \E\left(\frac{\Im \Tr \mathcal{G}  }{N\eta}\right)^{2^L},
\end{equation}
\end{proof}
and similar for $\E |\widehat{Q}_L|$, which upon substitution yields that
\begin{equation}\label{e:Q0bound}
\E |Q_0|^q +\E |\widehat{Q}_0|^q\leq (Cq)^{cq}\eta^{-q}\left( \E\left(\frac{\Im \Tr \mathcal{G}  }{N}\right)^{q} + \E\left|\frac{\Im \mathcal{G} _{11}(z)}{\sqrt N}\right|^{q}\right)
\end{equation} and the desired bounds on $\E |\Upsilon|^{2q}$ follows.

\section{Non-optimal bound and bootstrap argument in the bulk}

Let
\begin{equation}\label{d:lambda}
\lambda^{(\J_1)}_{(\J_2)} : = \max \{|\Lambda^{(\J_1)}_{(\J_2)}|\chi_{S_{E,\eta}}, \min\{|\Lambda^{(\J_1)}_{(\J_2)}|, |\tilde \Lambda^{(\J_1)}_{(\J_2)}|\}, |\Im \Lambda^{(\J_1)}_{(\J_2)}|\}\end{equation}
By Proposition \ref{b-R},
$
|\theta|^q\E |\lambda^{(\J_1)}_{(\J_2)}|^{2q} \leq C^{2q}|\theta|^q \E| R^{(\J_1)}_{(\J_2)}|^q.
$
Taking expectation of a power of $\theta R$ we obtain (as in \cite{cacciapuoti2015bounds})
\begin{multline}\label{e:calc}\E|\theta R^{(\J_1)}_{(\J_2)}|^q \leq
 \frac{|\theta |^q}{N} \sum_{k=1}^N \E\left|\left(T_k
  + \Upsilon^{(\J_1\cup \{k\})}_{(\J_2)}\right)G^{(\J_1)}_{(\J_2), kk}\right|^q
  \\
\leq \left|\theta \right|^q \E\left|\left(T_1
  + \Upsilon^{(\J_1 \cup \{1\})}_{(\J_2)}\right)G^{(\J_1)}_{(\J_2), 11}\right|^q
  \\
\leq \frac{\E \left |C \theta (|\,|\J_1|-|\J_2|\,|+1)G^{(\J_1)}_{(\J_2),11}\right|^q}{(N\eta)^q} + \left|C\theta \right|^q\sqrt{\E|G^{(\J_1)}_{(\J_2), 11}|^{2q}\E\left|
  \Upsilon^{(\J_1 \cup \{1\})}_{(\J_2)}\right|^{2q}}
  \\
\leq \frac{\E \left| C\theta (|\,|\J_1|-|\J_2|\,|+1)G^{(\J_1)}_{(\J_2), 11}\right|^q}{(N\eta)^q} + \left|C\theta q\right|^{cq}\sqrt{ \E|G^{(\J_1)}_{(\J_2),11}|^{2q}\frac{\E|\mathcal{G}^{(\J_1 \cup \{1\})}_{(\J_2),22}|^{2q}}{N^q}}
\\
+
 \left|C\theta q\right|^{cq}\sqrt{\E|G^{(\J_1)}_{(\J_2),11}|^{2q}   \left(\E   \frac{( \Im \Tr \mathcal{G}^{(\J_1 \cup \{1\})}_{(\J_2)})^q}{(N\eta)^qN^q} +  \frac{\E|\mathcal{G}^{(\J_1 \cup \{1\})}_{(\J_2),22}|^{q}}{(N\eta)^q}      \right)}
\\
\leq \frac{\E \left| C\theta (|\,|\J_1|-|\J_2|\,|+1) G^{(\J_1)}_{(\J_2), 11}\right|^q}{(N\eta)^q} + \left|C\theta q\right|^{cq}\sqrt{ \E|G^{(\J_1)}_{(\J_2),11}|^{2q}\frac{\E|\mathcal{G}^{(\J_1 \cup \{1\})}_{(\J_2),22}|^{2q}}{N^q}}
\\
+
 \left|C\theta \right|^q\sqrt{\E|G^{(\J_1)}_{(\J_2),11}|^{2q}
\frac{(Cq)^{cq}}{(N\eta)^q}   \left(\left(\frac{\left|\,|\J_1|+1-|\J_2|\,\right|}{N\eta}\right)^{q} +  \E \left(\Im \Delta +  \Im \Lambda^{(\J_1 \cup \{1\})}_{(\J_2)}\right)^q +   \E|\mathcal{G}^{(\J_1 \cup \{1\})}_{(\J_2),22}|^{q}\right)}
\\
\leq \left|C\theta q\right|^{cq}\sqrt{ \E|G^{(\J_1)}_{(\J_2),11}|^{2q}\frac{\E|\mathcal{G}^{(\J_1 \cup \{1\})}_{(\J_2),22}|^{2q}}{N^q}}
\\
+
 \left|Cq\right|^{cq}\frac{|\theta| ^{\frac{q}{4}}\sqrt{ \E| \sqrt \theta G^{(\J_1)}_{(\J_2),11}|^{2q}}} {(N\eta)^{\frac{q}{2}}} \left(\left|\,|\J_1|+1-|\J_2|\,\right|^q +  \sqrt{\E|\sqrt \theta\lambda|^q} + \sqrt{\E|\sqrt \theta\mathcal{G}^{(\J_1 \cup \{1\})}_{(\J_2),22}|^{q}}\right)
\end{multline}
In the second to last line, the term $\frac{|\J_1|+1-|\J_2|}{N\eta}$ arises from equation \eqref{e:traces}, and $\Im \Lambda^{(\J_1 \cup \{1\})}_{(\J_2)}$ is close to $\Im \Lambda^{(\J_1)}_{(\J_2)}$ similar to \eqref{e:minorcalc}. Since for any $x, \delta>0$, $x^{1/4} < \delta x + \delta^{-1/3}$, setting $\delta = \left(2(\E|\sqrt \theta G^{(\J_1)}_{(\J_2),11}|^{2q})^{1/2}\frac{(Cq(|\,|\J_1|-|\J_2|\,|+1))^{cq}|\theta|^{q/4}}{(N\eta)^{q/2}}\right)^{-1}$ and using Cauchy-Schwarz inequality on $ \sqrt{\E|\sqrt \theta\lambda|^q}$, we get
\begin{multline}\label{e:bb}
	|\theta |^q\E \lambda^{2q}
\leq \frac{(Cq(|\,|\J_1|-|\J_2|\,|+1))^{cq}}{(N\eta)^{q/6}} |\theta|^{q/12} \left((\E|\sqrt \theta \mathcal{G}^{(\J_1 \cup \{1\})}_{(\J_2),22}|^{2q})^{1/2}+\left|\,|\J_1| -|\J_2|+1\right|^{\frac{q}{2}}\right)\left((\E|\sqrt \theta  G^{(\J_1)}_{(\J_2),11}|^{2q})^{1/2} + 1 \right) \\
+\left|C\theta q\right|^{cq}\sqrt{ \E|G^{(\J_1)}_{(\J_2),11}|^{2q}\frac{\E|\mathcal{G}^{(\J_1 \cup \{1\})}_{(\J_2),22}|^{2q}}{N^q}}.\end{multline}

\begin{lemma} \label{l:Gkk}
Let $q < \left (\frac{N \eta}{|\sqrt{\theta}|} \right )^{1/4}$, $N\eta>|\sqrt{\theta}| M$ for some constant $M>0$, fixed $E$. Assume that $\J_1, \J_2$ are such that $0 \leq | \J_1| - |\J_2| \leq  Cq$ for a uniform constant $C$. Then with definitions as before, $\E|G^{(\J_1)}_{(\J_2), 11}\sqrt \theta|^{q} \leq C^{q}$ and $\E|\mathcal{G}^{(\J_1 \cup \{1\})}_{(\J_2), 22}\sqrt \theta|^{q} \leq C^{q}$, for some constant $C$.
\end{lemma}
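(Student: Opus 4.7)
The plan is a bootstrap argument in $\eta$, applied to both $G$ and $\mathcal{G}$ simultaneously. Using the Schur complement formula \eqref{e:Gkk} from Lemma \ref{l:eqDelta}, we have
$$\sqrt\theta\, G^{(\J_1)}_{(\J_2),11} = -\frac{1}{\sqrt\theta(1+\Delta)}\cdot\frac{1}{1+(\Lambda^{(\J_1)}_{(\J_2)}+T_1+\Upsilon^{(\J_1\cup\{1\})}_{(\J_2)})/(1+\Delta)},$$
and an analogous formula for $\mathcal{G}^{(\J_1\cup\{1\})}_{(\J_2),22}$ via \eqref{e:curlyGkk}. A direct inspection of the MP relation $\theta\Delta(1+\Delta)=-1$ shows that $|\sqrt\theta(1+\Delta)|$ is bounded below by a positive constant on our spectral domain (in particular $\sqrt\theta(1+\Delta)\to i$ as $\theta\to 0^+$). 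Thus the target bound $|\sqrt\theta G_{11}|\leq C$ reduces to showing that $\sqrt\theta(\Lambda+T_1+\Upsilon)$ is small in a sufficiently high moment sense.

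I would introduce the control function
$$F(\eta):=\max\Bigl\{\sup_{\J_1,\J_2}\E|\sqrt\theta\, G^{(\J_1)}_{(\J_2),11}|^{2q},\;\sup_{\J_1,\J_2}\E|\sqrt\theta\,\mathcal{G}^{(\J_1)}_{(\J_2),11}|^{2q}\Bigr\},$$
with the supremum taken over $\J_1,\J_2$ satisfying $0\leq|\J_1|-|\J_2|\leq Cq$. The function $F$ is continuous in $\eta$ (by analyticity of the resolvent entries). At a starting scale $\eta_0=O(1)$ the trivial bound $|G_{11}|\leq 1/\eta_0$ gives $F(\eta_0)\leq C^{2q}$. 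The core of the proof is a bootstrap step: assuming $F(\eta)\leq (2C)^{2q}$ at some $\eta$ in our range, I would show $F(\eta)\leq C^{2q}$. A standard connectedness argument then propagates the bound down to all admissible $\eta$.

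For the bootstrap step, I would plug the hypothesis $F(\eta)\leq (2C)^{2q}$ into \eqref{e:bb} and obtain
$$|\theta|^q\,\E|\lambda^{(\J_1)}_{(\J_2)}|^{2q}\leq \frac{(Cq)^{cq}}{(N\eta/|\sqrt\theta|)^{q/6}}+\text{lower order terms},$$
which is much smaller than $1$ under $N\eta\geq M|\sqrt\theta|$ and $q\leq (N\eta/|\sqrt\theta|)^{1/4}$. Combining this with the deterministic estimate $|\sqrt\theta T_1|\lesssim |\sqrt\theta|/(N\eta)$ from \eqref{e:boundsT12} and with Lemma \ref{l:Upsilonbound} applied to $\Upsilon^{(\J_1\cup\{1\})}_{(\J_2)}$ (whose right-hand side is again controlled by the bootstrap hypothesis on $\mathcal{G}_{22}$), the perturbation in the denominator of the Schur formula is controlled in $L^{2q}$ by a small constant. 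Expanding the Schur complement then yields $\E|\sqrt\theta G_{11}|^{2q}\leq C^{2q}$, which strictly improves on $(2C)^{2q}$ and closes the loop; the argument for $\mathcal{G}_{22}$ is symmetric via \eqref{e:curlyGkk}.

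The main obstacle is the bookkeeping of $\J$-dependencies. The Schur complement of $G^{(\J_1)}_{(\J_2)}$ calls upon $\mathcal{G}^{(\J_1\cup\{1\})}_{(\J_2)}$ and vice versa, and Lemma \ref{l:Upsilonbound} brings in further resolvent entries with modified index sets. The family of resolvents invoked through the iteration must remain within the class $\{0\leq|\J_1|-|\J_2|\leq Cq\}$, and the combinatorial factors $(Cq(|\J_1|-|\J_2|+1))^{cq}$ appearing in \eqref{e:bb} must be absorbed uniformly into the $(Cq)^{cq}$ prefactor --- this is precisely why the lemma assumes $|\J_1|-|\J_2|\leq Cq$. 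The parallel treatment of $G$ and $\mathcal{G}$ (required because each Schur identity mixes the two) has to be set up from the outset in the definition of $F(\eta)$.
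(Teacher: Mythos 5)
Your overall strategy (Schur complement plus a bootstrap in $\eta$, treating $G$ and $\mathcal{G}$ in tandem) is the right one, but the same-scale continuity bootstrap as you set it up cannot close. The improvement step necessarily passes through Cauchy--Schwarz: to bound $\E|\sqrt\theta G^{(\J_1)}_{(\J_2),11}|^{q}$ from the self-consistent equation one needs $\E|\sqrt\theta G^{(\J_1)}_{(\J_2),11}|^{2q}$ and $\E|\sqrt\theta\lambda|^{2q}$ as input (see \eqref{3.5}), and \eqref{e:bb} in turn consumes $2q$-th moments of the diagonal entries. So the hypothesis ``$F(\eta)\leq(2C)^{2q}$'' cannot be upgraded to ``$F(\eta)\leq C^{2q}$'' at the \emph{same} exponent and the \emph{same} $\eta$: you would need the $4q$-th moment, which your hypothesis does not provide, and the deterministic fallback $|G_{11}|\leq\eta^{-1}$ is far too lossy. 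This is exactly why the paper runs a discrete descent $\eta_{j+1}=\eta_j/16$ together with the deterministic monotonicity of Lemma \ref{G11eta/s}: the induction hypothesis at the larger scale $\eta_j$ is valid for exponents up to $(N\eta_j/|\sqrt E|)^{1/4}=2\,(N\eta_{j+1}/|\sqrt E|)^{1/4}$, so the needed $2q$-th moments are available there, and Lemma \ref{G11eta/s} transports them to $\eta_{j+1}$ at the cost of a factor $16^{q}$ that the improvement step then recovers. Without this scale/exponent bookkeeping your argument cannot reach the full claimed range $q<(N\eta/|\sqrt\theta|)^{1/4}$; at best it covers $q$ up to half the exponent range, and even then the open--closed argument must be rebuilt around a two-exponent hypothesis. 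Relatedly, the number of admissible removed indices must shrink along the descent (the Schur step for $G^{(\J_1)}_{(\J_2),11}$ calls $\mathcal{G}^{(\J_1\cup\{1\})}_{(\J_2)}$ and vice versa), which the paper encodes via $|\J_{1,j}|=|\J_{2,j}|\leq L_j$ with $L_{j+1}=L_j-1$; a supremum over a fixed class does not capture this.

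A second, smaller gap: you expand only the $\Lambda$-form of the Schur complement, so the perturbation in the denominator is $|\Lambda^{(\J_1)}_{(\J_2)}|$ itself. Proposition \ref{b-R} controls $|\Lambda|$ by $\sqrt{|R|}$ only on $S_{E,\eta}$ (or $E<0$); elsewhere only $\min\{|\Lambda|,|\tilde\Lambda|\}$ is controlled. The paper therefore writes the self-consistent equation twice --- once with $\Lambda$ and once with $\tilde\Lambda=-\Lambda-2\Delta-1$, using $1-\theta\Delta(2\Delta+1)=-\theta\Delta^2$ to resum the extra term --- and takes the better of the two, which is why the quantity $\lambda^{(\J_1)}_{(\J_2)}$ of \eqref{d:lambda} (built from the minimum of the two roots) appears in \eqref{e:bb} and in the bootstrap. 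You should incorporate this two-branch expansion explicitly rather than only citing \eqref{e:bb}.
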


\begin{proof}
We will implement an induction argument similar to \cite{cacciapuoti2015bounds, gotze2016optimal}.  The induction hypothesis will be that for $\eta_j = \eta_0/16^j$ for some constant $\eta_0$, any $\J_{1, j}, \J_{2, j}$ with $|\J_{1, j}|= |\J_{2, j}| \leq |\log_{16} \eta|+1-j = :L_j$ and $k \notin \J_{1, j}$
\begin{equation}\label{e:inductionhyp}
\E|G^{(\J_1 \cup \J_{1, j})}_{(\J_2 \cup \J_{2, j}), 11}(\eta_j)\sqrt \theta|^{q} < C_0^{q} \quad \text{and}\quad \E|\mathcal{G}^{(\J_1 \cup\J_{1, j} \cup \{1\})}_{(\J_2\cup\J_{2, j}), 22}(\eta_j)\sqrt \theta|^{q} < C_0^{q}
 \end{equation}
 for $q < \left (\frac{N \eta_j}{|\sqrt{E}|} \right )^{1/4}$ for a universal constant $C_0$. We notice that this holds to initiate our induction for $\eta_0$ constant. Letting $\eta_{j+1} = \eta_j/16$ and $L_{j+1} = L_j - 1$ we will show that inequality \eqref{e:inductionhyp} taken at $\eta_j$ implies the same inequality with the same constant $C_0$ for $\eta_{j+1}$. For easier notation, we will suppress the dependence on $\J_1, \J_2$ mentioning only the step where they come up (which is equation \eqref{e:lambdabootstrap}).

 From the induction hypothesis and Lemma \ref{G11eta/s} we see that
\begin{equation}\begin{split}\label{e:s}
\E|G^{(\J_{1, j})}_{(\J_{2, j}), 11}(\eta_{j+1})\sqrt \theta|^{q} < (16C_0)^{q} \quad \text{and}\quad \E|\mathcal{G}^{(\J_{1, j} \cup \{1\})}_{(\J_{2, j}), 22}(\eta_{j+1})\sqrt \theta|^{q} < (16C_0)^{q}
 \end{split}\end{equation}
for any $\J_{1, j}, \J_{2, j}$ with $|\J_{1, j}|= |\J_{2, j}| \leq L-j$. This will need to be improved to the bound $C_0^q$ for any $\J_{1, j+1}, \J_{2, j+1}$ of size up to $L - j -1$.

From \eqref{e:Gkk} and \eqref{d:tildeLambda} we obtain that
\begin{align}
&G^{(\J_{1,j+1})}_{(\J_{2,j+1}), 11} = \Delta - \sqrt{\theta}\Delta(\sqrt{\theta}\Lambda^{(\J_{1,j+1})}_{(\J_{2,j+1})}-\sqrt{\theta} T_1 - \sqrt{\theta} \Upsilon^{(\J_{1,j+1}\cup \{1\})}_{(\J_{2,j+1})}) G^{(\J_{1,j+1})}_{(\J_{2,j+1}), 11}  \\
&G^{(\J_{1,j+1})}_{(\J_{2,j+1}), 11} = \Delta - \sqrt{\theta}\Delta(-\sqrt{\theta}\tilde\Lambda^{(\J_{1,j+1})}_{(\J_{2,j+1})})-\sqrt{\theta}T_1 - \sqrt{\theta} \Upsilon^{(\J_{1,j+1}\cup \{1\})}_{(\J_{2,j+1})}) G^{(\J_{1,j+1})}_{(\J_{2,j+1}), 11} + \theta\Delta(2\Delta + 1)G^{(\J_{1,j+1})}_{(\J_{2,j+1}), 11}.\end{align}
The analogous statements for $\mathcal{G}^{(\J_{1, j} \cup \{1\})}_{(\J_{2, j}), 22}$ follow similarly from \eqref{e:curlyGkk} and \eqref{d:tildeLambda}:
\begin{equation}\label{e:bootstrapcalG}
\mathcal{G}^{(\J_{1, j+1} \cup \{1\})}_{(\J_{2, j+1}), 22} = \Delta - \sqrt{\theta}\Delta\left[\sqrt{\theta}\Lambda^{(\J_{1, j+1}\cup \{1\})}_{(\J_{2, j+1}), 11}-\sqrt{\theta}\mathcal{T}_1 - \sqrt{\theta} Y^{(\J_{1, j+1}\cup \{1\} )}_{(\J_{2, j+1}\cup \{2\})}\right] \mathcal{G}^{(\J_{1, j+1}\cup \{1\})}_{(\J_{2, j+1}), 22} .\end{equation}

This yields that
\begin{align*}
&|G^{(\J_{1,j+1})}_{(\J_{2,j+1}), 11}|\leq |\Delta| + |G^{(\J_{1,j+1})}_{(\J_{2,j+1}), 11}|(|\sqrt{\theta}
\Lambda^{(\J_{1,j+1})}_{(\J_{2,j+1})}|+|\sqrt{\theta}T^{(\J_{1,j+1}\cup \{1\})}_{(\J_{2,j+1})}|+|\sqrt{\theta}\Upsilon^{(\J_{1,j+1}\cup \{1\})}_{(\J_{2,j+1})}|)|\sqrt{\theta} \Delta|
\\
&|G^{(\J_{1,j+1})}_{(\J_{2,j+1}), 11}|\leq \left|\frac{\Delta}{1-\theta \Delta(2\Delta +1)}\right| + |G^{(\J_{1,j+1})}_{(\J_{2,j+1}), 11}|(|\sqrt{\theta}\tilde\Lambda^{(\J_{1,j+1})}_{(\J_{2,j+1})}|+|\sqrt{\theta}T^{(\J_{1,j+1}\cup \{1\})}_{(\J_{2,j+1})}|+|\sqrt{\theta}\Upsilon^{(\J_{1,j+1}\cup \{1\})}_{(\J_{2,j+1})}|)|\sqrt{\theta} \Delta|
\end{align*}
and using \eqref{e:De} we see that $1-\theta \Delta(2\Delta +1) = -\theta\Delta^2$ and thus $\frac{\Delta}{1-\theta \Delta(2\Delta +1)}= \frac{1}{\theta \Delta}$.

We will use the bounds $C_1 \leq|\sqrt{\theta}\Delta| \leq C_2$, valid in our domain, and let $C = \max \{C_1, C_2\}$. So, we have that:
\begin{equation*}
|\sqrt{\theta}G^{(\J_{1,j+1})}_{(\J_{2,j+1}), 11}|
\leq C \left [1+|\sqrt{\theta}G^{(\J_{1,j+1})}_{(\J_{2,j+1}), 11}|(|\sqrt{\theta}|\min\{|\Lambda^{(\J_{1,j+1})}_{(\J_{2,j+1})}|, |\tilde\Lambda^{(\J_{1,j+1})}_{(\J_{2,j+1})}|\}+|\sqrt{\theta}T_1|+|\sqrt{\theta}\Upsilon^{(\J_{1,j+1}\cup \{1\})}_{(\J_{2,j+1})}|) \right ]
\end{equation*}

and, taking power $q$, expectation, and using Cauchy-Schwarz we get at $\eta_{j+1}$

\begin{multline} \label{3.5}
\mathbb{E}|\sqrt{\theta}G^{(\J_{1,j+1})}_{(\J_{2,j+1}), 11}|^q
\leq C^q \big[ 1+ \sqrt{\mathbb{E}|\sqrt{\theta}G^{(\J_{1,j+1})}_{(\J_{2,j+1}), 11}|^{2q}} \sqrt{\mathbb{E}(|\sqrt{\theta}\lambda^{(\J_{1,j+1})}_{(\J_{2,j+1})}|\})^{2q}}
\\
+ \frac{|\sqrt{\theta}|^q }{(N\eta_{j+1})^q} \mathbb{E}|\sqrt{\theta}G^{(\J_{1,j+1})}_{(\J_{2,j+1}), 11}|^{q} +\sqrt{\mathbb{E}|\sqrt{\theta}G^{(\J_{1,j+1})}_{(\J_{2,j+1}), 11}|^{2q}} \sqrt{\mathbb{E}|\sqrt{\theta}\Upsilon^{(\J_{1,j+1}\cup \{1\})}_{(\J_{2,j+1})}|^{2q}} \big]
\end{multline}
Using the above, Lemma \ref{l:Upsilonbound}, and a calculation similar to \eqref{e:calc} we obtain again at $\eta_{j+1}$
\begin{multline}\label{e:bootstrap}
 \mathbb{E}|\sqrt{\theta}G^{(\J_{1,j+1})}_{(\J_{2,j+1}), 11}|^q \leq (Cq)^{cq} \Bigg [ 1+ \sqrt{\mathbb{E}|\sqrt{\theta}G^{(\J_{1,j+1})}_{(\J_{2,j+1}), 11}|^{2q}} \sqrt{\mathbb{E}|\sqrt{\theta} \lambda^{(\J_{1,j+1})}_{(\J_{2,j+1})}|^{2q} }
 +
  \frac{|\sqrt{\theta}|^q} {(N\eta_{j+1})^q} \mathbb{E}|\sqrt{\theta}G^{(\J_{1,j+1})}_{(\J_{2,j+1}), 11}|^{q}
  \\
  + \sqrt{\mathbb{E}|\sqrt{\theta}G^{(\J_{1,j+1})}_{(\J_{2,j+1}), 11}|^{2q}}   \frac{|\sqrt{\theta}|^{q/4}}{(N\eta_{j+1})^{q/2}} \sqrt{1 + \E |\sqrt \theta\lambda^{(\J_{1,j+1})}_{(\J_{2,j+1})}| ^{q} +  \E|\sqrt \theta \mathcal{G}^{(\J_{1,j+1})\cup\{1\}}_{(\J_{2,j+1}), 22}|^{q}}
  \\
  +  \sqrt{\E|\sqrt{\theta}G^{(\J_{1,j+1})}_{(\J_{2,j+1}), 11}|^{2q}\frac{\E|\sqrt \theta\mathcal{G}^{(\J_{1,j+1})\cup\{1\}}_{(\J_{2,j+1}), 22}|^{2q}}{N^q}}\Bigg ]
\end{multline}

We use \eqref{e:s} to bound the terms $\mathbb{E}|\sqrt{\theta}G^{(\J_{1,j+1})}_{(\J_{2,j+1}), 11}|^{2q}$ and $\E| \sqrt \theta\mathcal{G}^{(\J_{1,j+1})\cup\{1\}}_{(\J_{2,j+1}), 22}|^{2q}$ in the above inequality, noting that $|\J_{1,j+1}\cup\{1\}| \leq L_j$. To use \eqref{e:s} we need $2q \leq  \left (\frac{N \eta_j}{|\sqrt{E}|} \right )^{1/4}$, which gives us $q \leq \left(\frac{N \eta_j}{16|\sqrt{E}|} \right )^{1/4} $, which is what we need. Then using \eqref{e:s} on equation \eqref{e:bb} at $\eta_{j+1}$ and recalling that $|\,|\J_1| - |\J_2|\,| \leq Cq$ we obtain

\begin{equation}\label{e:lambdabootstrap}
\E | \sqrt{\theta} \lambda^{(\J_{1,j+1})}_{(\J_{2,j+1})} |^q
\leq  \frac{(Cq)^{cq}}{(N\eta_{j+1})^{q/6}} |\theta|^{q/12} \left((16C_0)^{q} + (Cq)^{q/2} \right)^2 + \frac{(Cq)^{cq}(16C_0)^{2q}}{N^{q/2}}
\end{equation}

Substituting this into \eqref{e:bootstrap}, we obtain that at $\eta_{j+1}$:
\begin{multline}
\E |\sqrt{\theta}G^{(\J_{1,j+1})}_{(\J_{2,j+1}), 11}|^q \leq (Cq)^{cq} \Bigg [ 1 + (16C_0)^q\frac{|\theta|^{q/12}} {(N\eta_{j+1})^{q/6}} \left((16C_0)^{q} + (Cq)^{cq} \right)+ \frac{(16C_0)^{3q}}{N^{q/2}}
\\
 +
 (16C_0)^q   \frac{q^{cq}|\sqrt{\theta}|^{q/4}}{(N\eta_{j+1})^{q/2}} \sqrt{2 + \frac{(Cq)^{cq}}{(N\eta_{j+1})^{q/6}} |\theta|^{q/12} \left((16C_0)^{q} + (Cq)^{q/2} \right)^2+ (16C_0)^{q}  }
\Bigg ] \\
\leq (Cq)^{cq} \left [ 2 + K^q \left( \frac{ |\sqrt{\theta}| } {N\eta} \right)^{q/6}  \right ]\end{multline}
for a constant $K>0$ depending on $C_0$ and $C$. We can choose $C_0>2C$ and $\frac{ N \eta}{|\sqrt{\theta}|} > M > K^{6}$, so that $ K^q \left( \frac{|\sqrt{\theta}|} {N\eta} \right)^{q/6} <1$ and therefore $\E[\sqrt{\theta}G_{11}(\eta_{j+1})]^q<C_0^q$ as required. We notice that all the steps are identical for $\mathcal{G}^{(\J_1 \cup \{1\})}_{(\J_2), 22}$ using  \eqref{e:bootstrapcalG}, and exactly one row gets stripped as well as exactly one column so $|\J_{1, j+1}| = |\J_{2, j+1}|$.

\end{proof}

\section{Optimal Bound for the Stieltjes transform}

In this section we prove Theorem \ref{t:sti}. We will use the matrix expansion algorithm from \cite{cacciapuoti2015bounds}, which carries over directly as it is based entirely on linear algebra of resolvents. We will make a note of the important modifications. We note, importantly, that as we expand resolvent entries, we will be removing columns of $X_N$ and we never need to remove rows. The expansion algorithm yields results in terms of high moments of the following quantities:

\begin{equation}\label{e:quantities} | \sqrt{\theta} G_{kk}^{(\J)} | , \left | \frac{1}{\sqrt{\theta}G_{kk}^{(\J)}} \right | ,\left| (\mathbb{I}-\E_k) \frac{1}{\sqrt{\theta} G_{kk}^{(\J)}} \right |, |\sqrt{\theta} G_{kl}^{(\J)}|,  \end{equation}
and we begin this section by estimating these moments.
\\

To obtain optimal bounds on $\Lambda$ near the soft edge the fluctuation bound on relevant quadratic forms \eqref{e:upsilonbound1} needs to be improved. For that purpose we will use \eqref{e:upsilonbound1} to obtain bounds on $|\sqrt{\theta} G_{(\J_2), kl}^{(\J_1)}|$ as well as $|\sqrt{\theta} \mathcal{G}_{(\J_2), kl}^{(\J_1)}|$ then use these in \eqref{e:upsilonbound2} to improve on the RHS of \eqref{e:upsilonbound2}. For convenience of notation we introduce the control parameter
\begin{equation}\label{d:controlparam} \mathcal{E}_q := \frac{1}{N^{q} |\theta|^{q/2}} + \max \left \{\frac{[\Im (|\theta| \Delta )]^{q} + \E |\theta \Lambda|^q}{(N\eta)^q} , \frac{|\theta|^{q}}{(N\eta)^{2q}} \right \}  .\end{equation}

We now show how to estimate the last quantity in \eqref{e:quantities}, using the formulas (see e.g. (2.20) of \cite{pillai2014universality}) (valid also for any $\J_1, \J_2$, with $k, l \notin \J_1\cup\J_2$)
\begin{equation} \label{e:form-off-diag}\begin{split}
&\sqrt{\theta} G_{kl} = 	\sqrt{\theta} G_{ll} \sqrt{\theta} G_{kk}^{(\{l\})} (\sqrt{\theta} (\mathbf{x}^{k}/\sqrt N)^* \mathcal{G}^{(\{k, l\}) }(\mathbf{x}^{l}/\sqrt N)) =: \sqrt{\theta} G_{ll} \sqrt{\theta} G_{kk}^{(\{l\})} K_{kl}
\\
&\sqrt{\theta} \mathcal{G}_{kl} = 	\sqrt{\theta} \mathcal{G}_{ll} \sqrt{\theta} \mathcal{G}_{(\{l\}),kk} (\sqrt{\theta} (\mathbf{x}_{k}/\sqrt N) G_{(\{k, l\}) }(\mathbf{x}_{l}/\sqrt N))^* =: \sqrt{\theta} \mathcal{G}_{ll} \sqrt{\theta} \mathcal{G}_{(\{l\}), kk} \mathcal{K}_{kl} .
\end{split}\end{equation}

We can define $K^{(\J_1)}_{(\J_2), kl}, \mathcal{K}^{(\J_1)}_{(\J_2), kl}$ analogously. The following lemma provides the necessary bound on $\E |  K_{kl} |^{2q}$ and an improved bound on $\Upsilon_{(\J_2)}^{(\J_1)}$.

 \begin{lemma} \label{off-diag}

Assume \eqref{a:4moment} and \eqref{a:truncation} for the entries of the matrix $X_N$ as before and let $\theta=E+i\eta$. Then there exist constants $c,c_0,C,M_1,M_2>0$ such that

\begin{equation}\label{e:Kkl}\max\{\E |  K_{kl} |^{2q}, \E |  \mathcal{K}_{kl} |^{2q}\} \leq (Cq)^{cq} \mathcal{E}_q \end{equation}

for $E,\eta \in S_{E,\eta}$, $N>M_1$ , $\frac{N\eta}{|\sqrt{\theta}|} > M_2$, $k\neq l\in \{1,...,N\}$, $q \in \mathbb{N}$ with $q\leq c_0 N .$ Assuming $|\,|\J_1| - |\J_2|\,| < Cq$ for some constant $C$, same inequality holds for $K^{(\J_1)}_{(\J_2), kl}, \mathcal{K}^{(\J_1)}_{(\J_2), kl}.$
\end{lemma}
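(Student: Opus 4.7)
The plan is to view $K_{kl}$ as a bilinear form in the two independent column vectors $\mathbf{x}^k$ and $\mathbf{x}^l$ separated by the matrix $H := \mathcal{G}^{(\J_1 \cup \{k,l\})}_{(\J_2)}$, which depends on neither. Writing
\[
K_{kl} = \frac{\sqrt{\theta}}{N}\sum_i \bar{x}^k_i\, y_i,\qquad y_i := \sum_j H_{ij}\, x^l_j,
\]
and conditioning on $\sigma(\mathbf{x}^l, H)$, the summands become independent centered random variables, so a single application of Rosenthal's inequality (Lemma \ref{l:rosenthal}) yields
\[
\E_{\mathbf{x}^k}|K_{kl}|^{2q} \leq (Cq)^{cq}\frac{|\theta|^q}{N^{2q}}\Bigl[\Bigl(\sum_i |y_i|^2\Bigr)^q + \max_i \E|x^k_i|^{2q}\sum_i |y_i|^{2q}\Bigr].
\]
The key simplification over Lemma \ref{l:Upsilonbound} is that no self-centering is needed: the independence of $\mathbf{x}^k$ from $\mathbf{x}^l$ already gives $\E_{\mathbf{x}^k} K_{kl}=0$ pointwise in $(H,\mathbf{x}^l)$, so the leading behavior is captured directly by $\sum_i|y_i|^2$.

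For the $L^2$ piece I would use the resolvent identity $H^*H = \Im H/\eta$ that underlies \eqref{e:GsImG} to rewrite $\sum_i |y_i|^2 = \eta^{-1}\mathbf{x}^{l\,*}(\Im H)\mathbf{x}^l$, then split off the mean:
\[
\mathbf{x}^{l\,*}(\Im H)\mathbf{x}^l = \Tr \Im H + (\mathbb{I}-\E_{\mathbf{x}^l})\mathbf{x}^{l\,*}(\Im H)\mathbf{x}^l.
\]
The first piece equals $N(\Im \Delta + \Im \Lambda^{(\J_1 \cup \{k,l\})}_{(\J_2)})$, while the second is precisely a self-centered quadratic form bounded by Lemma \ref{l:Upsilonbound} applied to $\Im H$. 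Using the minor telescoping identity \eqref{e:minorcalc} to compare $\Lambda^{(\J_1 \cup \{k,l\})}_{(\J_2)}$ with $\Lambda^{(\J_1)}_{(\J_2)}$, and Lemma \ref{l:Gkk} to bound $\E|\sqrt{\theta}H_{ii}|^{r}\leq C^{r}$, this branch contributes exactly the middle term $([\Im(|\theta|\Delta)]^q + \E|\theta\Lambda|^q)/(N\eta)^q$ of $\mathcal{E}_q$. For the Rosenthal tail $\sum_i|y_i|^{2q}$ I would first apply the truncation \eqref{a:truncation}, giving $\max_i\E|x^k_i|^{2q}\leq D^{2q-4}\mu_4 N^{(q-2)/2}$, and then concentrate each $y_i$ by a second Rosenthal application conditioned on $H$. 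This reduces matters to bounding $\sum_j |H_{ij}|^2 = (\Im H)_{ii}/\eta$ and the high moments $\E|H_{ij}|^{2q}$, which are handled by the off-diagonal bound \eqref{e:boundoffdiag} together with Lemma \ref{l:Gkk}; the resulting contributions fill in the $N^{-q}|\theta|^{-q/2}$ and $|\theta|^q(N\eta)^{-2q}$ summands of $\mathcal{E}_q$.

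The main obstacle is the bookkeeping: the two nested Rosenthal applications generate a bouquet of mixed terms, and each must be checked to fit inside one of the three summands defining $\mathcal{E}_q$ with the correct balance of powers of $|\theta|$, $N$, $\eta$, and $q$. The hypothesis $|\,|\J_1|-|\J_2|\,| \leq Cq$ enters solely so that Lemma \ref{l:Gkk} applies to the doubly-stripped resolvent $H$, and the conditions $q \leq c_0 N$ and $N\eta/|\sqrt{\theta}|>M_2$ are the standard ranges for Rosenthal and for the bounds on $\Im \Lambda$ respectively. The bound on $\mathcal{K}_{kl}$ is obtained by the symmetric argument with $\mathbf{x}_k, \mathbf{x}_l$ in place of $\mathbf{x}^k, \mathbf{x}^l$ and $G^{(\J_1)}_{(\J_2\cup\{k,l\})}$ in place of $H$, using the analogous identity $G^*G = \Im G/\eta$.
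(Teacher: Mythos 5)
Your overall framing (a bilinear form in the two independent columns, concentrated by Rosenthal conditionally on one of them) is consistent with the paper's, and your treatment of the leading $L^2$ piece via $\sum_i|y_i|^2=\eta^{-1}\mathbf{x}^{l\,*}(\Im H)\mathbf{x}^l$ and $\Tr\Im H$ correctly produces the middle summand $([\Im(|\theta|\Delta)]^q+\E|\theta\Lambda|^q)/(N\eta)^q$ of $\mathcal{E}_q$. But there is a genuine gap, and it is not bookkeeping: you control the off-diagonal entries $H_{ij}$ only through the deterministic-type bound \eqref{e:boundoffdiag} together with Lemma \ref{l:Gkk}, which yields $\E|H_{ij}|^{2q}\lesssim C^q(|\sqrt{\theta}|\eta)^{-q}$ and misses the crucial factor $N^{-q}$. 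With only this input, your Rosenthal tail contributes (after inserting $\max_i\E|x^k_i|^{2q}\lesssim D^{2q}N^{q/2-1}$ and the second Rosenthal application) a term of order $(Cq)^{cq}\bigl(|\sqrt{\theta}|/(N\eta)\bigr)^{q}$, and the centered part of the inner quadratic form, bounded via \eqref{e:upsilonbound1}, contributes order $\bigl(|\sqrt{\theta}|/(N\eta)\bigr)^{3q/2}$. Near the soft edge, e.g.\ $E=4$, $\eta\sim N^{-2/3}$ (which lies in $S_{E,\eta}$), these are $N^{-q/3}$ and $N^{-q/2}$ respectively, while $\mathcal{E}_q\sim N^{-2q/3}$; so neither term fits into any of the three summands of $\mathcal{E}_q$, and the claimed optimal bound does not follow.

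What is missing is the two-step bootstrap that the paper's proof is built on: one first runs the crude estimate to get the non-optimal bound $\E|K_{kl}|^{2q}\leq (Cq|\sqrt{\theta}|)^{cq}(N\eta)^{-q}$, then combines it with \eqref{e:form-off-diag} and Lemma \ref{l:Gkk} to obtain the intermediate off-diagonal resolvent bound \eqref{e:Gklbb}, $\E|G_{kl}|^{2q}\leq (Cq)^{cq}(N\eta)^{-q}$, and only then re-inserts this improved off-diagonal control into the refined quadratic-form inequality \eqref{e:upsilonbound2} (whose term $\E|\mathcal{G}_{12}|^{2q}/N^q$ then lands inside $N^{-q}|\theta|^{-q/2}$, and whose term $(Cq/N\eta)^{q}\E|\mathcal{G}_{11}/\sqrt{N}|^{q}$ replaces the too-large $\E|\mathcal{G}_{11}|^{q}/(N\eta)^{q}$ of \eqref{e:upsilonbound1}). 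Without this self-improvement — or some substitute that captures the probabilistic smallness $|H_{ij}|\sim (N\eta)^{-1/2}$ rather than the a priori $\eta^{-1/2}$ — your argument can only reach the non-optimal rate $(N\eta)^{-q}$, not $\mathcal{E}_q$.
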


\begin{proof}

The following argument is identical for $K^{(\J_1)}_{(\J_2), kl}, \mathcal{K}^{(\J_1)}_{(\J_2), kl},$ so we work with $K_{kl}$. By the definition of  $K_{kl}$ and using the notation $\epsilon_{k1}, \epsilon_{k_2}$ for $\epsilon_1$ and $\epsilon_2$ as in \eqref{d:epsilonk12} we get that:

\begin{multline} \label{e:Kklbb}
\E | K_{kl} |^{2q} \leq \frac{(C|\sqrt \theta|)^q}{N^{2q}} \left( \E |\epsilon_{k2}|^{2q} + \E \sum_j|\mathcal{G}^{(kl)}_{jj}x_{kj}x_{lj}|^{2q}\right)
\leq
 \frac{(Cq|\sqrt \theta|)^{cq}}{(N\eta)^q}\\
\end{multline}
where $\E |\epsilon_{k2}|$ is bounded using \eqref{e:epsilon2bound}, \eqref{e:Q0bound} and $\E \sum_j|\mathcal{G}^{(kl)}_{jj}x_{kj}x_{lj}|^{2q}$ is bounded by Rosenthal's inequality like $\E|\epsilon_{k1}|^{2q}$ in \eqref{e:Rosenthal}. We also use Lemma \ref{l:Gkk} to bound $\E |\mathcal{G}_{kk}|^{2q}$. Now using \eqref{e:Kklbb}, \eqref{e:form-off-diag}, and Lemma \ref{l:Gkk} we obtain that
\begin{equation}\label{e:Gklbb}
\E|G_{kl}|^{2q} \leq \frac{(Cq)^{cq}}{(N\eta)^q}.
\end{equation}
To improve the bound \eqref{e:upsilonbound1}, we see that using equation \eqref{e:upsilonbound2} and \eqref{e:Gklbb} as well as Lemma \ref{l:Gkk} (also using that $ \frac{2}{(N\eta)^qN^{q/2}} \leq \frac{1}{(N\eta)^{2q}}+ \frac{1}{N^{q}}$), we obtain
\begin{equation}\label{e:upsilonboundoptimal} \E|\Upsilon|^{2q} \leq \left(\frac{Cq}{N\eta}\right)^{cq}\E|\mathrm{Im}\Tr \mathcal{G}|^q   + (Cq)^{cq}\left(\frac{1}{N^q}+\frac{1}{(N\eta)^{2q}}\right)\end{equation}
and using \eqref{e:Gklbb} we can improve the bound on $\E |\epsilon_{2k}|^{2q}$ in \eqref{e:firststep}, which yields \eqref{e:Kkl}.

\end{proof}

\medskip

\begin{lemma} \label{reciprocal}
Assume \eqref{a:4moment} and \eqref{a:truncation} for the entries of $X_N$ as before and let $\theta=E+i\eta \in S_{E,\eta}$.
There exist constants $c,C,M>0$ such that

$$ \E \frac{1}{| \sqrt{\theta} G_{11}^{(\J)}|^{2q } } \leq C^q ,$$

for $\theta\in S_{E,\eta}$, $N\eta > |\sqrt{\theta}| M$, $q\leq c(N \eta)^{1/4} $ and $\J\subset \{1,...,N \}$, with $|\J|\leq 2q.$
\end{lemma}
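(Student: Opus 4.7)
The plan is to invert equation \eqref{e:Gkk} applied to the minor $G^{(\J)}$, so that $1/(\sqrt{\theta}G^{(\J)}_{11})$ is written as a sum of four terms that we control one at a time. Explicitly, \eqref{e:Gkk} gives
\[
\frac{1}{\sqrt{\theta}\,G^{(\J)}_{11}} = -\sqrt{\theta}\bigl(1+\Delta_N^{(\J)} + T_1 + \Upsilon^{(\J\cup\{1\})}\bigr),
\]
and using $\theta\Delta(1+\Delta) = -1$ (from \eqref{e:De}) together with $\Delta_N^{(\J)} = \Delta + \Lambda^{(\J)}$ yields
\[
\frac{1}{\sqrt{\theta}\,G^{(\J)}_{11}} = \frac{1}{\sqrt{\theta}\Delta} - \sqrt{\theta}\,\Lambda^{(\J)} - \sqrt{\theta}\,T_1 - \sqrt{\theta}\,\Upsilon^{(\J\cup\{1\})}.
\]
By Minkowski, it then suffices to bound each of the four terms in $L^{2q}$ by a constant to the power $q$.

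The first term is deterministic: from the explicit solution of the Marchenko--Pastur quadratic (or equivalently from \eqref{7}), $|\sqrt{\theta}\Delta|$ is bounded above \emph{and} below by positive constants on $S_{E,\eta}$, so $|1/(\sqrt{\theta}\Delta)| \leq C$. The second deterministic term, $\sqrt{\theta}T_1$, is bounded by $C(|\J|+1)|\sqrt{\theta}|/(N\eta)$ via \eqref{e:boundsT12}, and since $|\J| \leq 2q$ and $N\eta \geq M|\sqrt{\theta}|$, this is $O(1)$. For the fluctuation $\sqrt{\theta}\Upsilon^{(\J\cup\{1\})}$, we invoke Lemma \ref{l:Upsilonbound} for $\mathcal{G}^{(\J\cup\{1\})}$: the $\mathcal{G}_{kk}$ moments are controlled by Lemma \ref{l:Gkk} (applicable since $|\J\cup\{1\}| \leq 2q+1$ still satisfies the linear-in-$q$ cardinality hypothesis, with $c_0$ adjusted accordingly), while the $\Im\Tr\mathcal{G}^{(\J\cup\{1\})}/N$ contribution is rewritten, via \eqref{e:traces} and $\Delta^{(\J\cup\{1\})}_N = \Delta + \Lambda^{(\J\cup\{1\})}$, as the bounded quantity $\Im\Delta$ plus a $\Lambda^{(\J\cup\{1\})}$ correction. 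Finally, for $\sqrt{\theta}\Lambda^{(\J)}$ we apply Proposition \ref{b-R} (the bound $|\Lambda^{(\J)}| \leq C\sqrt{|R^{(\J)}|}$ is valid since $\theta \in S_{E,\eta}$), then \eqref{e:bb} combined with Lemma \ref{l:Gkk} to estimate $\E|\sqrt{\theta}G^{(\J)}_{11}|^{2q}$ and $\E|\sqrt{\theta}\mathcal{G}^{(\J\cup\{1\})}_{22}|^{2q}$; the result is a power of $|\sqrt\theta|/(N\eta)$ times factorials in $q$, which is $\leq C^{q}$ in our regime.

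The main technical obstacle is bookkeeping: we must ensure that the bounds of Lemma \ref{l:Gkk} and \eqref{e:bb} remain valid after the index-set enlargements $\J \to \J\cup\{1\}$ (and similarly for $\mathcal{G}$), while keeping the cardinality growth linear in $q$ so that the constants $|\,|\J_1|-|\J_2|\,|+1$ appearing in \eqref{e:bb} do not blow up the $q^{cq}$ prefactors beyond what can be absorbed. Given the hypothesis $|\J| \leq 2q$ and the range $q \leq c(N\eta)^{1/4}$, both restrictions mesh with the induction step of Lemma \ref{l:Gkk} and with the domain condition $N\eta \geq M|\sqrt\theta|$, so the four contributions combine into the desired bound $\E|1/(\sqrt{\theta}G^{(\J)}_{11})|^{2q} \leq C^{q}$.
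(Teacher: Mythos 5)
Your proof is correct and follows essentially the same route as the paper: invert the Schur-complement identity \eqref{e:Gkk} for $G^{(\J)}_{11}$, and bound the deterministic, trace, and fluctuation pieces separately via Lemma \ref{l:Upsilonbound} and Lemma \ref{l:Gkk}. The only cosmetic difference is that the paper controls the trace term $\sqrt{\theta}\,\Delta_N^{(\J\cup\{1\})}$ directly by convexity from Lemma \ref{l:Gkk}, whereas you detour through $\Lambda^{(\J)}$, Proposition \ref{b-R}, and \eqref{e:bb} — a valid but slightly longer path to the same bound.
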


\begin{proof}
We can take $\J=\emptyset$ as the argument is similar in the general case. We have that:

\begin{multline*} \E \frac{1}{| \sqrt{\theta} G_{11}|^{2q } } = \E |\sqrt{\theta} (1+ (\mathbf{x}^1)^*\mathcal{G}^{(1)}\mathbf{x}^1/N ) |^{2q} \leq C^q + (C|\theta|)^q \E | (\mathbf{x}^1)^*\mathcal{G}^{(1)}\mathbf{x}^1/N |^{2q}
\\
\leq C^q (1+ |\theta|^q\E|(\mathbf{x}^1)^*\mathcal{G}^{(1)}\mathbf{x}^1/N  -\E_{\xx^1} (\mathbf{x}^1)^*\mathcal{G}^{(1)}\mathbf{x}^1 /N |^{2q} +\E | \E_{\xx^1} \sqrt{\theta}(\mathbf{x}^1)^*\mathcal{G}^{(1)}\mathbf{x}^1/N |^{2q} ) .
\end{multline*}

The second term on the RHS is small by Lemma \ref{l:Upsilonbound}.
For the third term, we find that:

\begin{equation}
\E | \E_{\xx^1} \sqrt{\theta}(\mathbf{x}^1)^*\mathcal{G}^{(1)}\mathbf{x}^1/N|^{2q} = \E \left |\frac{1}{N} \sqrt{\theta} \mathrm{Tr} (\mathcal{G}^{(1)}) \right|^{2q} = \E \left | \frac{1}{N} \sqrt{\theta} \left ( \frac{1}{\theta} + \mathrm{Tr} (G^{(1)}) \right) \right |^{2q} \leq C^q ,
\end{equation}
where we used Lemma \ref{l:Gkk} and that $|\Delta_N^{(1)} - \Delta_N|\leq \frac{1}{N\eta} $ as in \eqref{e:minor}.\\
\end{proof}

To estimate the third quantity in \eqref{e:quantities}, we find by \eqref{e:upsilonboundoptimal} that:

\begin{equation}\label{e:oneover}\left| (\mathbb{I}-\E_{\xx^k}) \frac{1}{\sqrt{\theta} G_{kk}}\right| = \left|-\sqrt{\theta} \Upsilon^{(\{k\})} \right|\leq (Cq)^{cq} \mathcal{E}_q.
\end{equation}

Lastly, we also need a bound on $\E \left | \frac{1}{\E_{\xx^1} \frac{1}{\sqrt{\theta} G_{11}}} \right |^q$ which we obtain in the following lemma.

\begin{lemma} \label{lem}
Let $E,\eta \in S_{E,\eta}$, where $\theta =E+i\eta$. There exist constants $c,C,M>0$ such that:

$$ \E \left | \frac{1}{\E_{\xx^1} \frac{1}{\sqrt{\theta} G_{11}}} \right |^q \leq C^q ,$$

for $N \eta \geq |\sqrt{\theta}| M $ and for $q \in \mathbb{N}$ with $q\leq c \left (\frac{N\eta}{|\sqrt{\theta}|} \right )^{1/4} .$
\end{lemma}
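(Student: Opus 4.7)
The plan is to exploit the fact that $\E_{\xx^1}[(\xx^1)^* A \xx^1] = \Tr A$ for any matrix $A$ independent of $\xx^1$, which gives an explicit expression for the inner expectation, and then to compare $A := 1/\E_{\xx^1}(1/(\sqrt\theta G_{11}))$ to $\sqrt\theta G_{11}$ through a small multiplicative correction. Starting from the Schur-type identity \eqref{e:Gkk}, we have $1/(\sqrt\theta G_{11}) = -\sqrt\theta(1 + (\xx^1)^*\mathcal{G}^{(\{1\})}\xx^1/N)$. Combining this with \eqref{e:oneover} (which says $(\II-\E_{\xx^1})(1/\sqrt\theta G_{11}) = -\sqrt\theta\Upsilon^{(\{1\})}$) yields the identity
\[
A = \frac{\sqrt\theta G_{11}}{1 + \theta G_{11}\Upsilon^{(\{1\})}}.
\]

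Set $Y := \theta G_{11}\Upsilon^{(\{1\})}$. The key probabilistic input is the bound $\PP(|Y|>1/2)\leq 1/4$ for $M$ sufficiently large. By Cauchy--Schwarz, $\E|Y|^2 \leq \sqrt{\E|\theta G_{11}|^4\,\E|\Upsilon^{(\{1\})}|^4}$. Lemma \ref{l:Gkk} controls the first factor by $|\theta|^2 C^2$, while \eqref{e:upsilonboundoptimal} at $q=2$, whose right-hand side we bound using Lemma \ref{l:Gkk} together with the off-diagonal estimate \eqref{e:Gklbb} already established in the proof of Lemma \ref{off-diag}, gives $\E|\Upsilon^{(\{1\})}|^4 \leq C/(N\eta)^2$. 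The assumption $N\eta\geq M|\sqrt\theta|$ then yields $\E|Y|^2\leq C|\sqrt\theta|/M$, which is $\leq 1/16$ for $M$ large, and Chebyshev finishes this step.

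We then split $\E|A|^q$ over $\{|Y|\leq 1/2\}$ and its complement. On the good event, $|1+Y|\geq 1/2$ gives $|A|\leq 2|\sqrt\theta G_{11}|$, and Lemma \ref{l:Gkk} yields $\E|A|^q\mathbf{1}_{|Y|\leq 1/2}\leq (2C)^q$. On the bad event, Cauchy--Schwarz combined with the probability estimate gives
\[
\E|A|^q\mathbf{1}_{|Y|>1/2}\leq \sqrt{\E|A|^{2q}\cdot\PP(|Y|>1/2)}\leq \tfrac{1}{2}\sqrt{\E|A|^{2q}}.
\]
Setting $y_q := \E|A|^q/(2C)^q$, this is the self-bounding recursion $y_q\leq 1 + \sqrt{y_{2q}}/2$. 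The a priori finiteness of $y_q$ follows from the deterministic bound $|A|\leq 1/(|\sqrt\theta|\cdot\Im\Delta_{N,\text{curly}}^{(\{1\})})\leq N|\theta|^{3/2}/\eta$, where the lower bound $\Im\Delta_{N,\text{curly}}^{(\{1\})}\geq\eta/(N|\theta|^2)$ comes from the necessarily zero eigenvalue of the rank-deficient $N\times N$ matrix $X^{(\{1\})}(X^{(\{1\})})^*$. Iterating the recursion and using that each application of $\sqrt{\,\cdot\,}$ halves the effective exponent then collapses the polynomial-in-$N$ a priori bound to a universal constant, provided the constant $c$ in $q\leq c(N\eta/|\sqrt\theta|)^{1/4}$ is chosen small enough that the iterated doublings $q, 2q, 4q,\ldots$ remain inside the validity ranges of Lemma \ref{l:Gkk} and \eqref{e:upsilonboundoptimal}.

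The hard part will be closing the self-bounding argument in a clean universal form: one has to balance the $\log$-many square-root iterations needed to reduce the $(CN^2/M)^q$ a priori bound to a constant against the requirement that the doubled range $2^k q$ at every iteration level still lie within the range of validity of the auxiliary $\Upsilon$ and $G_{11}$ estimates, which is what ultimately forces the size of $c$ in the hypothesis.
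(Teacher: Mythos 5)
Your identity $\dfrac{1}{\E_{\xx^1}\frac{1}{\sqrt\theta G_{11}}}=\dfrac{\sqrt\theta G_{11}}{1+\theta G_{11}\Upsilon^{(\{1\})}}$ is correct and is the same algebraic input as the paper's \eqref{e:tildeGkk}, and your deterministic a priori bound $|A|\leq N|\theta|^{3/2}/\eta$ (via the forced zero eigenvalue of the rank-deficient $X^{(\{1\})}(X^{(\{1\})})^*$) is a nice observation. The good-event estimate and the Chebyshev bound $\PP(|Y|>1/2)\leq 1/4$ are also fine. The problem is the final step: the self-bounding recursion does not close. Writing $y_q:=\E|A|^q/(2C)^q$, your inequality is $y_q\leq 1+\tfrac12\sqrt{y_{2q}}$, and the only available input for $y_{2q}$ is the a priori bound $y_{2q}\leq B^{2q}$ with $B=\mathrm{poly}(N)$. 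But then $\sqrt{y_{2q}}\leq B^{q}$: the halving of the exponent produced by each square root is \emph{exactly} cancelled by the doubling of the moment order, so after $k$ iterations you still have $y_q\leq 2+\tfrac14\,(y_{2^kq})^{2^{-k}}\leq 2+\tfrac14 B^{q}$, which is still polynomial in $N$ raised to the power $q$. The prefactor $\prod_j (1/2)^{2^{-j}}$ converges to $1/4$ rather than to $0$, so it cannot rescue the argument either; nor can a sharper tail bound on $Y$, since in the regime $N\eta\asymp M|\sqrt\theta|$ the probability of the bad event can only be made smaller than a constant depending on $M$, not smaller than any negative power of $N$.

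The mechanism that actually closes this kind of argument — and the one the paper uses — is a multiscale induction in $\eta$ rather than in the moment order $q$. One first proves the Lipschitz-in-$\log\eta$ estimate $\left|\frac{d}{d\eta}\log\widetilde{G_{11}}\right|\leq 2/\eta$, which gives $|\widetilde{G_{11}}(E+i\eta/s)|\leq s^2|\widetilde{G_{11}}(E+i\eta)|$. This transfers the constant bound $C_0^q$ established at scale $\eta_j$ to a rough but crucially $N$-\emph{independent} bound $(256C_0)^q$ at the next scale $\eta_{j+1}=\eta_j/16$; the identity \eqref{e:tildeGkk} together with the smallness of $\sqrt\theta\Upsilon^{(\{1\})}$ at the new scale then restores the constant $C_0^q$, and one descends from $\eta_0=O(1)$ to the target $\eta$. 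The essential difference from your scheme is that the quantity being bootstrapped at each step is already a universal constant to the power $q$, so there is no polynomial-in-$N$ factor to collapse. You would need to replace your iteration in $q$ by this iteration in $\eta$ (or find some other way to get an $N$-independent a priori bound on $\E|A|^{2q}$) for the proof to go through.
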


\begin{proof}
The proof is similar to Lemma 5.1 in \cite{cacciapuoti2015bounds}.
We define $$\widetilde{G_{11}}=\frac{1}{\E_{\xx^1} \frac{1}{G_{11}}}= - \frac{1}{\theta(1+\Tr\mathcal{G}^{(\{1\})})}. $$

We calculate that $$ \left| \frac{d}{d\eta} \log \widetilde{G_{11}}(E+i\eta) \right|= \left| \frac{d}{d\eta} \log \left(\frac{1}{\theta} \right) + \frac{d}{d\eta} \log \left ( \frac{1}{1+ \Tr\mathcal{G}^{(\{1\})}} \right ) \right|
=
 \left| - \frac{i}{\theta} - \frac{\frac{d}{d\eta}\Tr\mathcal{G}^{(\{1\}) } }{1+\Tr\mathcal{G}^{(\{1\}) }} \right| .$$ \\

We show that $ \left | \frac{d}{d\eta} \Tr\mathcal{G}^{(\{1\}) } \right | \leq \frac{\Im \Tr\mathcal{G}^{(\{1\}) }}{\eta} $ as follows:
\begin{multline}\frac{d}{d\eta} \Tr \mathcal{G}^{(1)} = \sum \limits_{k=1}^N \frac{d}{d\eta} \mathcal{G}^{(\{1\})}_{kk}(\theta) = \sum \limits_{k=1}^N i  ((\mathcal{G}^{(\{1\})})^{2})_{kk}= \sum \limits_{k=1}^N i \langle e_k, (\mathcal{G}^{(\{1\})})^2 e_k \rangle
\\
 \Rightarrow  \left | \frac{d}{d\eta} \mathcal{G}^{(\{1\})} \right | \leq \sum \limits_{k=1}^N || (\mathcal{G}^{(\{1\})})^* e_k|| \ || \mathcal{G}^{(\{1\})} e_k || \leq \sum \limits_{k=1}^N ( (\mathcal{G}^{(\{1\})})^*\mathcal{G}^{(\{1\})} )_{kk} = \sum \limits_{k=1}^N \frac{\Im (\mathcal{G}^{(\{1\})})_{kk}}{\eta} = \frac{\Im \Tr \mathcal{G}^{(\{1\})}}{\eta} .\end{multline}
We conclude that
\begin{equation}  \left | \frac{d}{d\eta} \log \widetilde{G_{11}} \right | \leq \frac{1}{|\theta|} + \frac{\Im \Tr\mathcal{G}^{(\{1\})}}{\eta |1+\Tr\mathcal{G}^{(\{1\})}|} \leq \frac{2}{\eta} ,\end{equation}
yielding that  \begin{equation} \left |\log \widetilde{G_{11}}(E+i\eta) - \log \widetilde{G_{11}}(E+i\eta/s) \right |= \left | \int_{\eta/s}^{\eta} \frac{d}{d\nu} \log \widetilde{G_{11}}(E+i\nu) d\nu \right | \leq \int_{\eta/s}^{\eta} \frac{2}{\nu} d\nu = \log s^2 \end{equation}
and thus $| \widetilde{G_{11}}(E+i\eta) | \leq s^2 | \widetilde{G_{11}}(E+i\eta/s ) | .$
The proof now proceeds with induction on $\eta$ just like in the proof of Lemma \ref{l:Gkk} using the identity \begin{equation}\label{e:tildeGkk}\sqrt{\theta}\widetilde{G_{11}} = \sqrt{\theta} G_{11} + \sqrt{\theta} G_{11} \sqrt{\theta} \widetilde{G_{11}} (\mathbb{I}-\E_{\xx^1} ) (\sqrt \theta G_{11})^{-1} \end{equation}
as well as \eqref{e:oneover} and the results of Lemma \ref{l:Gkk}.
\end{proof}

Lastly, we use the matrix expansion algorithm to take advantage of the fluctuations. Hence the following proposition, analogous to Lemma 4.1 of \cite{cacciapuoti2015bounds}:
\begin{proposition} \label{p:optimalbound}
 Let $\mathcal{E}_q$ be the control parameter as in \eqref{d:controlparam}.
There exist constants $C, M, c_0 >0$ such that

\begin{equation} \label{4.2}
\E \left | \frac{1}{N} \sum \limits_k \sqrt{\theta} \Upsilon^{(\{k\})}  \sqrt{\theta} G_{kk} \right |^{2q} \leq (Cq)^{cq^2} \mathcal{E}_{4q}^{1/2} ,
\end{equation}
for $1 \leq q \leq c_0 \left (\frac{N\eta}{|\sqrt{\theta}|} \right )^{1/8} $ , $\frac{N\eta}{\sqrt{|\theta|}}\geq M $ , $K>0$ , $\theta = E + i \eta \in S_{E, \eta} $.
\end{proposition}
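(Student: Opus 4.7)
The approach is to implement the matrix expansion algorithm of Lemma~4.1 in \cite{cacciapuoti2015bounds}, which the authors explicitly note carries over from the Wigner case because it is based entirely on linear algebra of resolvents. The crucial structural input is that $\E_{\xx^k}\Upsilon^{(\{k\})}=0$, and identity \eqref{e:tildeGkk} together with \eqref{e:oneover} produces the decomposition
\[
\sqrt\theta\,\Upsilon^{(\{k\})}\sqrt\theta\,G_{kk}
  = \underbrace{\sqrt\theta\,\Upsilon^{(\{k\})}\sqrt\theta\,\widetilde{G_{kk}}}_{=:W_k}
  + \underbrace{\theta^{2}\,G_{kk}\widetilde{G_{kk}}\,\bigl(\Upsilon^{(\{k\})}\bigr)^{2}}_{=:R_k},
\]
where $\widetilde{G_{kk}}$ is independent of $\xx^k$, so that $\E_{\xx^k}W_k=0$. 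The point is that $R_k$ is quantitatively subdominant because it carries an extra factor of $\Upsilon^{(\{k\})}$, whereas $W_k$ can be controlled by exploiting cancellations under averaging in $k$.

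Next, I would expand $\bigl|N^{-1}\sum_k W_k\bigr|^{2q}$ as a sum over $2q$-tuples $(k_1,\dots,k_{2q})$ of column indices. Because $\widetilde{G_{k_ik_i}}$ still depends on each $\xx^{k_j}$ with $j\neq i$, I would use the resolvent identity \eqref{e:Gij} iteratively to make this dependence explicit: after sufficient iteration, every resolvent entry is replaced by one with all relevant columns stripped (genuinely independent of those columns), plus controlled lower-order remainders. Once the dependence has been localized in this way, any multi-index in which some column appears with multiplicity one gives zero in expectation, since conditioning on the remaining columns leaves behind a single $\Upsilon^{(\{k\})}$ factor annihilated by $\E_{\xx^k}$. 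Hence only multi-indices in which every index is repeated at least twice survive; such tuples are $O(N^q)$ in number, which combined with the $N^{-2q}$ prefactor produces an overall $N^{-q}$ gain from averaging. Each matched pair of $\Upsilon$'s is bounded via Cauchy--Schwarz and the improved quadratic-form estimate \eqref{e:upsilonboundoptimal}, while the accompanying diagonal and off-diagonal resolvent factors generated by the resolvent expansion are controlled by Lemmas~\ref{l:Gkk}, \ref{lem}, \ref{reciprocal}, and \ref{off-diag}. Multiplying the pair contributions over the $q$ matches yields the target factor $\mathcal{E}_{4q}^{1/2}$ (essentially $\mathcal{E}_{2q}$ raised to the ``squared'' scale expected from a CLT-like cancellation), and the remainders $R_k$ are treated by the same scheme with a spare $\Upsilon^{(\{k\})}$ making them subdominant.

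The main obstacle is the combinatorial bookkeeping: one must track how each application of \eqref{e:Gij} redistributes the dependence on individual columns among the $\widetilde G$ and $G$ factors, and verify that every singleton-index term genuinely vanishes in expectation after the expansion. The count of surviving pairings, together with the branching of the iterated resolvent identities, is what produces the combinatorial prefactor $(Cq)^{cq^{2}}$. A further constraint is that the expansion introduces stripped column sets $\J$ whose size grows with the number of iterations, and Lemma~\ref{l:Gkk} only applies while $|\J_1|-|\J_2|=O(q)$ and $q\leq(N\eta/|\sqrt\theta|)^{1/4}$; the two layers of Cauchy--Schwarz used to split $\Upsilon$ factors from resolvent factors degrade this to the restriction $q\leq c_0(N\eta/|\sqrt\theta|)^{1/8}$ stated in the proposition. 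Finally, the use of the improved bound \eqref{e:upsilonboundoptimal} rather than \eqref{e:upsilonbound1} is essential to recover the $\mathcal{E}_{4q}$ scale uniformly down to the soft edge, where $\Im\Delta$ is not bounded below.
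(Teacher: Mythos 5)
Your proposal is correct and follows essentially the same route as the paper: the paper splits $W_k$ into $(\mathbb{I}-\E_k)W_k + \E_k W_k$, which is algebraically equivalent to your split into the centered term $\sqrt{\theta}\,\Upsilon^{(\{k\})}\sqrt{\theta}\,\widetilde{G_{kk}}$ plus the quadratic remainder (the paper's \eqref{4.3} is exactly $\E_k$ applied to your $R_k$), and that remainder is likewise bounded by H\"older together with Lemmas \ref{lem} and \ref{l:Gkk} and the improved bound \eqref{e:upsilonboundoptimal}. The centered part is then handled, just as you describe, by the matrix expansion algorithm of \cite{cacciapuoti2015bounds} --- iterated resolvent identities to decouple the columns, vanishing of singleton indices in expectation, and a count of the surviving terms producing the $(Cq)^{cq^2}$ prefactor --- which is precisely what the paper invokes.
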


\begin{proof}

To match notation in \cite{cacciapuoti2015bounds}, we introduce $W_k= \sqrt{\theta} \Upsilon_{k} \sqrt{\theta} G_{kk}$ and we split:
\begin{equation}
\label{e:eW2q} \frac{1}{N} \sum \limits_k  W_k = \frac{1}{N} \sum \limits_k (\mathbb{I}-\E_k)W_k + \frac{1}{N}\sum \limits_k \E_k W_k .$$
By H\"older's inequality,
$$ \E \left | \frac{1}{N} \sum \limits_k  W_k \right |^{2q} \leq C^q \E \left | \frac{1}{N} \sum \limits_k  ( \mathbb{I} - \E_k) W_k \right |^{2q} + C^q \E |\E_1 W_1|^{2q} .
\end{equation}
To bound the second term in \eqref{e:eW2q} above, using that $\sqrt{\theta}\Upsilon^{(\{k\})}=-(\mathbb{I}-\E_k)\frac{1}{\sqrt{\theta}G_{kk}}$, we obtain
\begin{equation} \label{4.3}
\E_k W_k = \frac{\E_k [\sqrt{\theta} G_{kk} (\sqrt{\theta}\Upsilon^{(\{k\})})^2]}{\left (\E_k\frac{1}{\sqrt{\theta} G_{kk}}\right )}.
\end{equation}
and applying Lemma \ref{lem} to \eqref{4.3}, we get that:
\begin{align*}
\E | \E_1 W_1 |^{2q} &\leq (\E |\sqrt{\theta} G_{11}|^{8q})^{\frac{1}{4}} \left ( \E \left | \frac{1}{\E_1 \frac{1}{\sqrt{\theta} G_{11}}} \right |^{8q} \right )^{\frac{1}{4}} ( \E | \sqrt{\theta} \Upsilon^{(\{1\})} |^{8q} )^{\frac{1}{2}} \\
&\leq (Cq)^{cq} \left ( \frac{|\theta|^{4q}}{ (N \eta)^{8q} } + \frac{(\Im|\theta| \Delta))^{4q} + \E|\theta \Lambda|^{4q} }{(N\eta)^{4q}}   \right )^{\frac{1}{2}} ,
\end{align*}
which is what we want.

\medskip

In order to handle the first term of \eqref{e:eW2q}, we use the matrix expansion algorithm as in Section 5.2 of \cite{cacciapuoti2015bounds}. We notice that equations (5.7), (5.8), and (5.9) are the basis of the expansion algorithm, and they are equivalent to the following (see e.g. (2.18) in \cite{pillai2014universality}):
\begin{equation}\label{e:algorithmnew}\begin{split}
&\sqrt \theta G_{ij}^{(\T)} = \sqrt \theta G_{ij}^{(\T k)} + \frac{\sqrt \theta G_{ik}^{(\T)}\sqrt \theta G_{kj}^{(\T)}}{\sqrt \theta G_{kk}^{(\T)}} \text{ for } i, j, k \notin \T \text{ and } i,j \neq k,
\\
&\frac{1}{\sqrt \theta G_{ii}^{(\T)}} = \frac{1}{\sqrt \theta G_{ii}^{(\T k)}} - \frac{\sqrt \theta G_{ik}^{(\T)}\sqrt \theta G_{ki}^{(\T)}}{\sqrt \theta G_{ii}^{(\T)}\sqrt \theta G_{ii}^{(\T k)}\sqrt \theta G_{kk}^{(\T)}} \text{ for } i, k \notin \T \text{ and } i \neq k\\
\end{split}\end{equation}
Using the above equation \eqref{e:algorithmnew}, we see that in our case the steps of the expansion algorithm (5.13), (5.14), (5.15) in \cite{cacciapuoti2015bounds} are the same except that each resolvent entry is multiplied by a factor of $\sqrt \theta$. Using our definition of $W$, equation (5.6) in \cite{cacciapuoti2015bounds} becomes analogous to
\begin{equation} \label{formexpand}
( \mathbb{I}-\E_{k_s})W_{k_s} = ( \mathbb{I}-\E_{k_s}) \left [ ( \mathbb{I}-\E_{k_s}) \frac{1}{\sqrt{\theta}G_{k_s k_s}} \right ] \sqrt{\theta} G_{k_s k_s} , s=1,...,2q ,
\end{equation}
so the initial terms of the algorithm are $A^{r}:=\sqrt \theta G_{k_r k_r}$ and $B^{r} :=\frac{1}{\sqrt{\theta}G_{k_r k_r}}$ are the same as (5.16), (5.17) of \cite{cacciapuoti2015bounds} except that each resolvent entry is multiplied by a $\sqrt \theta$.
Then (5.18), (5.19), and (5.20) of \cite{cacciapuoti2015bounds} carry over directly as well as properties (1) through (5) of relevant strings. We then obtain the desired result
\[
\E \left|\frac{1}{N}\sum_k( \mathbb{I}-\E_{k})W_{k}\right|^{2q} \leq (Cq)^{cq^2} \mathcal{E}_{4q}^{1/2}
\]
using the proof of (5.32) of \cite{cacciapuoti2015bounds}. It relies on counting the types of terms that result from the expansion algorithm. Since our algorithm yields the same type and number of terms in each step, the proof in our case will be identical.
In \cite{cacciapuoti2015bounds}, we notice the use of bounds (3.9) and Lemma 5.2 in (5.44) as well as in Case 2, bounds (5.26) and (3.4) in (5.43) and (5.49). We can replace (3.9), Lemma 5.2, (5.26), and (3.4) of \cite{cacciapuoti2015bounds} by our bounds on the relevant quantities in \eqref{e:quantities} as well as our (5.4).

\end{proof}

\begin{proof}[Proof of Theorem \ref{t:sti}]
 By Proposition \ref{b-R}, in order to control $\Lambda$, we need to control high moments of $R=N^{-1} \sum \limits_{k=1}^N G_{kk}(T_k + \Upsilon^{(\{k\})})$. Taking expectation of $2q$  power we obtain
\begin{equation}\label{e:Ypsilonpower}\E| \theta R|^{2q} \leq  C^q \left ( \E \left | \frac{1}{N} \sum \limits_k \sqrt{\theta} T_k \sqrt{\theta} G_{kk} \right |^{2q} + \E \left | \frac{1}{N} \sum \limits_k \sqrt{\theta} \Upsilon^{(\{k\})} \sqrt{\theta} G_{kk} \right |^{2q} \right )  .\end{equation}

For the first term by \eqref{e:minor}, we obtain
\begin{align} \label{4.1}
\E \left| \frac{1}{N} \sum \limits_{k} \sqrt{\theta}T_k \sqrt{\theta}G_{kk} \right|^{2q}\leq C^q \frac{1}{N^{2q} |\theta|^q}   .
\end{align}
while the second term is handled in Proposition \ref{p:optimalbound}, yielding that
$$ \E|\theta R|^{2q} \leq (Cq)^{cq^2} \mathcal{E}_{4q}^{1/2}. $$

Here we are able simplify the analysis in \cite{cacciapuoti2015bounds} by only using the bounds proportional to $R$ from Proposition \ref{b-R} to control $E|\Lambda|^{2q}$ on $S_{E, \eta}$ and $E|\Im\Lambda|^{2q}$. Our simplifications carry over also to the Wigner case. We can assume that

$$[\Im(|\theta| \Delta)]^{2q} + \E|\theta \Lambda|^{2q} \geq \frac{|\theta|^{2q}}{(N\eta)^{2q}} ,$$

(otherwise $\E | \Lambda|^{2q} \leq \frac{1}{(N\eta)^{2q}} ,$ as we want)
and in this case:

$$ \mathcal{E}_{2q} = \frac{1}{N^{2q} |\theta|^{q}} + \frac{\Im (|\theta| \Delta )]^{2q} + \E |\theta \Lambda|^{2q}}{(N\eta)^{2q}} \leq \frac{\eta^q + \Im (|\theta| \Delta )]^{2q} + \E |   \theta \Lambda|^{2q}}{(N\eta)^{2q}} ,$$ \\
Using the bound proportional to $|R|$ from Proposition \ref{b-R}, we obtain

\begin{align*}
\E |\theta \Lambda|^q &\leq \frac{C^q \E| \theta R|^q}{|\Delta+\frac{1}{2}|^q} \leq \frac{(Cq)^{cq^2}}{|\Delta+\frac{1}{2}|^{q}} \left ( \frac{\eta^q + [\Im(|\theta|\Delta)]^{2q}}{(N\eta)^{2q}} \right)^{1/2} = \frac{(Cq)^{cq^2}}{|\Delta+\frac{1}{2}|^q} \frac{|\theta|^q}{(N\eta)^q} \left ( \frac{\eta^q}{|\theta|^{2q}} + [\Im (\Delta)]^{2q} \right )^{1/2} \\
&\leq \frac{(Cq)^{cq^2} |\theta|^q } {(N\eta)^q} \left [ \left (\frac{\sqrt{\eta}}{|\theta||\Delta+\frac{1}{2}|} \right )^q+ \left ( \frac{\Im\Delta}{|\Delta+\frac{1}{2}|} \right )^q \right ].
\end{align*} \\
To obtain the desired bound we now note that
$\Im\Delta \leq |\Delta+\frac{1}{2}|$ and $\frac{\sqrt{\eta}}{|\theta| |\Delta+\frac{1}{2}|} \leq C$ on our domain. The first one follows easily and for the second one we argue as follows:
$$ \frac{\sqrt{\eta}}{|\theta||\Delta+\frac{1}{2}|} = \frac{2\sqrt{\eta}}{\sqrt{|\theta|}\sqrt{|\theta-4|}} ,$$
and by triangle inequality either $|\theta|\geq 2$ or $|\theta-4|\geq 2$. Then in the first case, we use the bound $\sqrt{\eta} \leq \sqrt{|\theta-4|} $ and in the second case the bound $\sqrt{\eta} \leq \sqrt{|\theta|} $. \\
Overall, this implies that
\begin{equation}\label{e:finalLambdabound}\mathbb{P} \left( |\Delta_N - \Delta|\geq \frac{K}{ N\eta} \right)\leq \frac{(N\eta)^q}{K^q}\E | \Lambda|^q \leq \frac{(Cq)^{cq^2}}{K^q} ,
\end{equation}
for $1 \leq q \leq c_0 \left (\frac{N\eta}{|\sqrt{\theta}|} \right )^{1/8} $ , $\frac{N\eta}{\sqrt{|\theta|}}\geq M $ , $K>0$ , $\theta = E + i \eta \in S_{E, \eta} $ .

\end{proof}

\section{Convergence of the counting function}\label{s:counting}
In this section we prove Theorem \ref{t:counting}.
\begin{proof}[Proof of Theorem \ref{t:counting}]
Let $0<E \leq 4$. We will use a Pleijel argument from \cite{pleijel1963theorem}, recently used in obtaining estimates on a measure from estimates on a Stieltjes transform in \cite{erdHos2016fluctuations}. We start from the following equations (equations (13) and (14) in  \cite{erdHos2016fluctuations}, following from equation (5) of \cite{pleijel1963theorem}):
\begin{equation}\label{e:pleijel}
\mu(-K, E) = \frac{1}{2\pi i}\int_{L(z_0)}m_{\mu}(z)dz + \frac{\eta_0}{\pi}\Re m_\mu(z_0) + O(\eta_0 \Im m_\mu(z_0))
\end{equation}
and
\begin{equation}\label{e:pleijel2}
\mu(x, x') = \frac{1}{2\pi i}\int_{\gamma(x, x')}m_{\mu}(z)dz + O(\eta_0(| m_\mu(x + i\eta_0)| + m_\mu(x' + i\eta_0)|)
\end{equation}
where $m_\mu$ is the Stieltjes transform of $\mu$ and $L(z_0)$ is a contour as in Figure \ref{f:domains} (see also \cite{erdHos2016fluctuations} Fig 1A), namely connects with line segments the points $E-i\eta_0, E-iQ, -1-iQ , -1+ iQ, E+ iQ, E+ i\eta_0$ in that order with an arbitrarily chosen constants $-1$ and $Q$, and $\gamma(x, x')$ is the contour connecting $x+i\eta_0, x+iQ, x'+iQ,$ and $x'+i\eta_0$ in that order.

\medskip
By Markov inequality we obtain that
\begin{equation}
\PP\left(|n_N(E) - n_{MP}(E)| \geq \frac{K \log N}{N}\right) \leq \frac{N^q\E(|n_N(E) - n_{MP}(E)|^q)}{\left(K \log N\right)^q}
\end{equation}
Then using \eqref{e:pleijel} and taking $z_0 := E + i\eta_0$ with $\eta_0 := \frac{M\sqrt E}{N}$ with $M$ as in Theorem \ref{t:sti} we obtain that
\begin{multline}
 \E(|n_N(E) - n_{MP}(E)|^q) = \E\left|\frac{1}{2\pi i}\int_{L(z_0)}\Lambda(z)dz + \frac{\eta_0}{\pi} \Re\Lambda(z_0) + O\left(\eta_0 (\Im \Delta_N(z_0)+ \Im \Delta_{MP}(z_0)\right)\right|^q
\\
\leq  C^q\left(\E\left|\int_{L(z_0)}\Lambda(z) dz\right|^q + O\left(\eta_0^q \E|\Lambda(z_0)|^q+\eta_0^q \Im \Delta_{MP}(z_0)^q\right)\right),
\end{multline}
noting that the constant in the $O$ comes from the Pleijel formula and is uniform in the matrix randomness. We study the above expression one term at a time.
For $E \leq 4$ we can bound the second term as follows
\begin{equation}\label{e:etalambda}
 \eta_0^q \E|\Lambda(z_0)|^q \leq  \eta_0^q \frac{Cq^{q^2}}{(N\eta_0 )^q} \leq \frac{Cq^{q^2}}{N^q}.
\end{equation}
The third term is bounded using the above inequality \eqref{e:etalambda} on $\Lambda$ as well as
\begin{equation}\label{e:etaoverE}
 \eta_0 \Im \Delta_{MP} \leq \frac{C\eta_0}{\sqrt E} \leq \frac{CM}{N}.\end{equation}

Now for the integral, we note that it suffices to study the part of the contour where $\Im z > 0$ since $\Lambda(\bar z) = \overline{\Lambda( z)}$. Thus we obtain
\begin{equation}
\E\left|\int_{L(z_0)}\Lambda(z) dz\right|^q \leq C^q \left(\E \left|\int_0^{\eta_0} \Lambda(-1 + iy) dy \right|^q +\E \left|\int_{\eta_0}^Q \Lambda(-1 + iy) - \Lambda(E + iy)dy \right|^q+  \E \left|\int_{-1}^E  \Lambda(x + iQ) dx \right|^q\right)
\end{equation}
Since all eigenvalues are positive we bound $\Lambda$ for $-1<0$ by $\Lambda(-1+i\eta) < 2$ which yields
\begin{equation}
\left|\int_0^{\eta_0} \Lambda(-1 + iy) dy \right|^q \leq \left(\int_0^{\eta_0} |\Lambda(-1 + iy)| dy \right)^q \leq C^q \eta_0^q.
\end{equation}
 Next we note that
\begin{equation}
\E \left|\int_{-1}^E  \Lambda(x + iQ) dx \right|^q \leq \frac{(Cq)^{q^2}}{(NQ)^q}
\end{equation}

Now we can bound the expected value of the integrals $ \E \left ( \int_{\eta_0}^Q | \Lambda( E+iy) |dy \right )^q $ and $ \E \left ( \int_{\eta_0}^Q | \Lambda( -1 +iy) |dy \right )^q $ for $E \leq 4$, noting that the argument is identical at $E$ and $-1$,
\begin{align*}
&\E \left ( \int_{\eta_0}^Q | \Lambda(E +iy) |dy \right )^q = \E \int_{\eta_0}^Q | \Lambda(E +iy_1) |dy_1 \int_{\eta_0}^Q | \Lambda(E+iy_2) |dy_2 \cdots \int_{\eta_0}^Q | \Lambda(E+iy_q) |dy_q \\
&= \E \int_{\eta_0}^Q \cdots \int_{\eta_0}^Q \prod_{j=1}^q | \Lambda(E +iy_j) | \prod_{j=1}^q dy_j = \int_{\eta_0}^Q \cdots \int_{\eta_0}^Q \E \prod_{j=1}^q | \Lambda(E +iy_j) | \prod_{j=1}^q dy_j \\
&\leq
 \int_{\eta_0}^Q \cdots \int_{\eta_0}^Q \prod_{j=1}^q \left (\E | \Lambda(E+iy_j) |^q \right )^{\frac{1}{q}}  \prod_{j=1}^q dy_j
\leq
 \frac{1}{N^q}  \int_{\eta_0}^Q \cdots \int_{\eta_0}^Q \prod_{j=1}^q \frac{(Cq)^{cq}}{y_j} \prod_{j=1}^q dy_j \\
&= \frac{(Cq)^{cq^2}}{N^q} \left ( \int_{\eta_0}^Q \frac{1}{y} dy \right )^q \leq (Cq)^{cq^2}  \frac{(\log N)^q}{N^q}
\end{align*}
where we can apply \eqref{e:finalLambdabound} inside the integral because our estimates on $\Lambda$ are uniform on compact sets.
\medskip

To prove the second part of \eqref{e:counting}, we use the  \eqref{e:pleijel2} and study the interval $[-E, E]$, noting that $
n_N(E) = \mathcal{N}([-E, E])/N
$
 and $n_{MP}(E) = n_{MP}(E) - n_{MP}(-E)$.
The corresponding integral can be bounded similar to above
\begin{multline}
\E \left | \int_{-E}^E  \Lambda(x+i\eta_0) - \Lambda(x-i\eta_0) dx \right |^q = \E \left | \int_{-E}^E  2\Im \Lambda(x+i\eta_0)\right |^q
\\
= \int_{-E}^E \cdots \int_{-E}^E \E\prod_{j = 1}^{q}  |2\Im \Lambda(x_j + i\eta_0) | d x_1 \cdots d x_q \leq \frac{(Cq)^{cq^2}E^q}{(N\eta_0)^q} \leq    \frac{(Cq)^{cq^2}(\sqrt E)^q}{M^q}
\end{multline}
and, similar to \eqref{e:etaoverE}
\begin{equation}\label{e:deltabound}
\max\{\eta_0 \Delta_{MP}(-E), \eta_0 \Delta_{MP}(E)\} \leq \frac{\eta_0}{\sqrt{E}}\leq \frac{M}{N}
\end{equation}
which together with \eqref{e:etalambda} yields the second part of \eqref{e:counting} for $E<4$.

To establish the \eqref{e:counting} for $E > 4$, we use \eqref{e:counting} for $E = 4$ to establish bounds on the number of eigenvalues outside the spectrum. Letting $\mathcal{N}_I$ be the number of eigenvalues in an interval $I$, we see that
\begin{equation}
 \mathcal{N}_{(4, \infty)} = N - N n(4) = N(n_{MP}(4) - n(4))
\end{equation}
which by \eqref{e:counting} for $E = 4$ yields that
\begin{equation}\label{e:outside}
\PP\left(\frac{\mathcal{N}_{(4, \infty)}}{N} > \frac{K\log N}{N}\right) \leq \frac{(Cq)^{q^2}}{K^q}
\end{equation}
and for $E > 4$,
\begin{equation}
\PP\left(|n_N(E) - n_{MP}(E)| \geq \frac{K \log N}{N}\right) \leq \PP\left(\frac{\mathcal{N}_{(4, \infty)}}{N} > \frac{K\log N}{N}\right)
\end{equation}
thus \eqref{e:outside} gives the desired bound.
\end{proof}

\section{Rigidity of the eigenvalues}

The aim of this section is a proof of Theorem \ref{t:rigidity}.
\begin{proof}[Proof of Theorem \ref{t:rigidity}]
Let $\alpha \leq \frac{N}{2}$. We will make use of the following inequalities near the hard edge and away from the soft edge:
$$ c \sqrt{x} \leq  n_{MP}(x) \leq C \sqrt{x} , $$ and $$ c  n_{MP}(x)^{-1} \leq \rho(x) \leq C  n_{MP}(x)^{-1} .$$
valid for $x\in (0,3] .$ The second inequality implies that
\beq \label{14}
c \frac{N}{a} \leq \rho(\gamma_a) \leq C  \frac{N}{a}
\eeq
for any $a \leq \frac{N}{2}.$

For $\varepsilon > 0$, we have that
\begin{align*}
\mathbb{P} \left ( | \lambda_a - \gamma_a | \geq K \epsilon \frac{a}{N}  \right ) \\
&\leq \mathbb{P} \left ( | \lambda_a - \gamma_a | \geq K\varepsilon\frac{a}{N}   \ \text{and} \ \lambda_a \leq \gamma_a \right ) + \mathbb{P} \left ( | \lambda_a - \gamma_a | \geq K\varepsilon \frac{a}{N}   \ \text{and} \ \lambda_a >\gamma_a \right ) \\
&= A + B \ .
\end{align*}

We consider first the term $A.$ We set
$$\ell = K \varepsilon\frac{a}{N}.$$
From $ \lambda_a \leq \gamma_a $ and $| \lambda_a - \gamma_a | \geq \ell$ we find that
$\lambda_a \leq \gamma_a - \ell .$ This implies that $ n_N( \gamma_a - \ell ) \geq \frac{a}{N} = n_{MP}(\gamma_a)$. By the mean value theorem for the function $n_{MP}$, there exists a point $x^* \in [\gamma_a - \ell , \gamma_a ] $ such that
$n_{MP}(\gamma_a ) - n_{MP}(\gamma_a - \ell ) = \rho(x^*) \ell$, yielding that
\begin{multline}n_N(\gamma_a - \ell) - n_{MP}(\gamma_a - \ell) = n_N(\gamma_a-\ell) - n_{MP}(\gamma_a) + \rho(x^*)\ell
\geq \rho(x^*) \ell \geq \rho(\gamma_a)K \varepsilon \frac{a}{N} \geq  cK \varepsilon ,
\end{multline}
because $\rho$ is non-increasing, $a < N/2$, and from \eqref{14}.
Setting $\varepsilon = \frac{\log N}{N} $ we deduce from Theorem \ref{t:counting} that
\begin{equation} A \leq \PP \left ( |n_N(\gamma_a - \ell) - n_{MP}(\gamma_a - \ell) | \geq  \frac{cK\log N}{N} \right ) \leq \frac{(Cq)^{cq^2}}{K^q}
\end{equation}
For $a \leq \log N$, set $\varepsilon = \frac{a}{N} \geq c\sqrt{\gamma_a}$ to obtain
\begin{equation}
A \leq \PP \left ( |n_N(\gamma_a - \ell) - n_{MP}(\gamma_a - \ell) | \geq cK \sqrt{(\gamma_a - \ell)_+} \right )
\leq \frac{(Cq)^{cq^2}}{K^q}.
\end{equation}
\\

We now estimate the term $B.$ From the estimate $n_{MP}(x) \sim \sqrt{x}$ near the hard edge, we have that
$$ \gamma_a \leq C \left ( \frac{a}{N} \right )^2 ,$$
for some constant $C>0$ for all $a < N/2$. We consider the number
$$
 y = 2C \left ( \frac{a}{N} \right )^2
 $$
and we further consider the cases that $\gamma_a + \ell \leq y $ or $ \gamma_a + \ell > y .$ \\

In the first case since $\lambda_a > \gamma_a $ and $| \lambda_a - \gamma_a | \geq \ell ,$ we have that  $\lambda_a > \gamma_a + \ell $ and so $ n_N( \gamma_a + \ell) \leq \frac{a}{N} = n_{MP}(\gamma_a).$

Hence, from the mean value theorem, we find $x^* \in [\gamma_a , \gamma_a + \ell ] \subset [\gamma_a , y ] $ such that $n_{MP}(\gamma_a + \ell) - n_{MP}(\gamma_a ) = \rho(x^*) \ell$, yielding that
\begin{equation*}
n_{MP}(\gamma_a + \ell) - n_N(\gamma_a + \ell) = n_{MP}(\gamma_a) - n_N(\gamma_a+\ell) + \rho(x^*)\ell
\geq \rho(x^*) \ell = \rho(x^*)  K\varepsilon \frac{a}{N} \geq \rho(y) K\varepsilon \frac{a}{N} \geq  c K\varepsilon,
\end{equation*}
where we used that $\rho$ is nonincreasing and that $\rho(y) \geq \frac{c}{\sqrt{y}}$ near the hard edge.
Setting $\varepsilon = \frac{\log N}{N}$ and using Theorem \ref{t:counting}, we conclude that
\begin{equation} B \leq \PP \left( |n_{MP}(\gamma_a + \ell) - n_N(\gamma_a + \ell) | \geq cK \frac{ \log N}{N}\right)\leq \frac{(Cq)^{cq^2}}{K^q},
\end{equation}
as required. For rigidity at the hard edge equation \eqref{e:rigidityedge}, let $\varepsilon = \frac{a}{N} $ to obtain
\begin{multline}
B\leq \PP \left( |n_{MP}(\gamma_a + \ell) - n_N(\gamma_a + \ell) | \geq \frac{cKa}{N} \right) \\
\leq \PP \left( |n_{MP}(\gamma_a + \ell) - n_N(\gamma_a + \ell) | \geq c\sqrt{K} \sqrt{\gamma_a + \ell}  \right)
\leq \frac{(Cq)^{cq^2}}{K^{q/2}} , \\ \end{multline}
where the second line follows as before because $\sqrt{\gamma_a} \leq c\frac{a}{N}$ and $\ell = \frac{Ka^2}{N^2}$.
\\

In the other case we have that $ \gamma_a + \ell > y$ so the inequality $\lambda_a > \gamma_a + \ell$ implies that $\lambda_a > y$ and therefore $n_N (y) \leq \frac{a}{N} = n_{MP}(\gamma_a).$ Hence from the mean value theorem there exists $x^* \in [\gamma_a, y]$ such that $n_{MP}(y) - n_{MP}(\gamma_a ) = \rho(x^*) \ell$, which yields
\begin{equation*}
n_{MP}(y) - n_N(y) = n_{MP}(\gamma_a) - n_N(y) + \rho(x^*)\ell \geq \rho(x^*) \ell = \rho(x^*)  K \varepsilon \frac{a}{N} \geq \rho(y) K\varepsilon \frac{a}{N} \geq  c K\varepsilon,
\end{equation*}
and we can conclude \eqref{e:rigiditybulk} and \eqref{e:rigidityedge} as above. This finishes the proof of Theorem \ref{t:rigidity}.
\end{proof}

\section*{Appendix}
Here we state Rosenthal's and Burkholder's inequalities adapted to complex variables and non-Hermitian bilinear forms (useful for $\mathcal G$). Given $x_1, ..., x_N$ with i.i.d. real and imaginary parts with $\E \Re x_j = \E \Im x_j = 0$ and $\E |\Re x_j|^2 = \E |\Im x_j|^2 = 1/2$, as in our setup. We also assume that $\E |x_j|^p \leq \mu_{p, N}$ for $p\geq 1$, so the moments exist but may depend on $N$.
\medskip

The following lemma is our version of Rosenthal's inequality, which is easy to prove by separating real and imaginary parts of the random variables and using Lemma 7.1 of \cite{gotze2016optimal}:
\begin{lemma}[Rosenthal's inequality] \label{l:rosenthal} There exists a constant $C_1$ such that
\begin{equation}
\E|\sum_{j=1}^N a_j x_j|^{p} \leq C_1^p p^p \left(\left(\sum_{j=1}^N |a_j|^2\right)^{p/2}+ \mu_{p, N}\sum_{j=1}^N|a_j|^p\right).
\end{equation}
\end{lemma}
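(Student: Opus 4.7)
The plan is to reduce the complex Rosenthal inequality to the real-variable version, namely Lemma 7.1 of \cite{gotze2016optimal}, by splitting every complex quantity into its real and imaginary parts. Write $x_j = \xi_j + i \eta_j$ and $a_j = \alpha_j + i\beta_j$, with $\xi_j, \eta_j \in \R$ independent, mean zero, and variance $1/2$, so that $\{\xi_j, \eta_j\}_{j=1}^N$ is a family of $2N$ independent mean-zero real random variables.

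First I would expand
\begin{equation*}
\sum_{j=1}^N a_j x_j = \underbrace{\sum_{j=1}^N (\alpha_j \xi_j - \beta_j \eta_j)}_{=: S_1} \; + \; i \underbrace{\sum_{j=1}^N (\alpha_j \eta_j + \beta_j \xi_j)}_{=: S_2},
\end{equation*}
and apply the elementary inequality $|S_1 + i S_2|^p \leq 2^{p/2}(|S_1|^p + |S_2|^p)$ to reduce the task to bounding $\E |S_1|^p$ and $\E |S_2|^p$ separately. Each $S_\ell$ is a linear combination of the $2N$ independent mean-zero real variables $\xi_1, \eta_1, \ldots, \xi_N, \eta_N$ with explicit real coefficients drawn from $\{\pm\alpha_j, \pm\beta_j\}$, so the real Rosenthal bound of \cite{gotze2016optimal} applies directly and gives, for $S_1$,
\begin{equation*}
\E|S_1|^p \leq C^p p^p \Bigl[ \bigl(\tfrac{1}{2}\textstyle\sum_j (\alpha_j^2 + \beta_j^2)\bigr)^{p/2} + \sum_j \bigl(|\alpha_j|^p \E|\xi_j|^p + |\beta_j|^p \E|\eta_j|^p\bigr) \Bigr],
\end{equation*}
and an analogous bound for $S_2$.

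Next I would simplify using the identities $\alpha_j^2 + \beta_j^2 = |a_j|^2$ and $|\alpha_j|^p + |\beta_j|^p \leq 2|a_j|^p$, together with the moment bounds $\E|\xi_j|^p, \E|\eta_j|^p \leq \E|x_j|^p \leq \mu_{p, N}$, which follow from $|\xi_j|, |\eta_j| \leq |x_j|$. Collecting terms and absorbing the numerical factors ($2^{p/2}$ from the separation of real and imaginary parts, the factor $2$ from $|\alpha_j|^p + |\beta_j|^p \leq 2|a_j|^p$, and the $(1/2)^{p/2}$ from the variance) into a single absolute constant $C_1$ produces the stated right-hand side.

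There is no genuine obstacle here; the argument is essentially mechanical once the decomposition is fixed. The only bookkeeping point worth checking is that the Rosenthal inequality of \cite{gotze2016optimal} is stated for independent, not necessarily identically distributed, mean-zero real variables with finite $p$-th moment, which is exactly the form we need after the real/imaginary split. No probabilistic ingenuity beyond this reduction is required.
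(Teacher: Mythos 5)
Your proof is correct and follows exactly the route the paper indicates for this lemma: the paper gives no detailed argument, stating only that it is ``easy to prove by separating real and imaginary parts of the random variables and using Lemma 7.1 of \cite{gotze2016optimal}'', which is precisely the reduction you carry out. Your bookkeeping of the constants ($2^{p/2}$ from separating $S_1$ and $S_2$, the variance factor $1/2$, and $|\alpha_j|^p+|\beta_j|^p\leq 2|a_j|^p$) is accurate.
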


The following lemma is our version of Burkholder's inequality. This is an extension of Lemma 7.3 of \cite{gotze2016optimal} for complex entries.
\begin{lemma}[Burkholder's Inequality]\label{l:burkholder}
Let $Q = \sum_{1 \leq j \leq N, 1 \leq k \leq N, j \neq k}a_{jk}x_j \overline{x_k}.$ Then there exists absolute constants $C_1, C_2$ such that
\begin{align*}
\E |Q|^q &\leq (C_1q)^q \left ( \E \left [ \sum \limits_{j=2}^n \left | \sum \limits_{k=1}^{j-1} a_{jk} x_k \right |^2 \right ]^{q/2} + \mu_{q,N} \sum \limits_{j=2}^n \E \left | \sum \limits_{k=1}^{j-1} a_{jk} x_k \right |^q \right ) \\
&+ (C_2q)^q \left ( \E \left [ \sum \limits_{j=2}^n \left | \sum \limits_{k=1}^{j-1} a_{kj} x_k \right |^2 \right ]^{q/2} + \mu_{q,N} \sum \limits_{j=2}^n \E \left | \sum \limits_{k=1}^{j-1} a_{kj} x_k \right |^q \right ) .
\end{align*}
\end{lemma}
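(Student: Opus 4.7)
The plan is to reduce to the classical real-valued Burkholder inequality by exhibiting a suitable martingale difference decomposition of $Q$. First I would symmetrize the double sum: splitting the constraint $j \neq k$ into $j > k$ and $j < k$ and relabeling summation indices gives
\[
Q = \sum_{j=2}^{N} Y_j, \qquad Y_j := x_j A_j + \overline{x_j}\, B_j,
\]
where $A_j := \sum_{k=1}^{j-1} a_{jk}\overline{x_k}$ and $B_j := \sum_{k=1}^{j-1} a_{kj} x_k$ are measurable with respect to $\mathcal{F}_{j-1} := \sigma(x_1,\dots,x_{j-1})$. Since $\E x_j = 0$ and $x_j$ is independent of $\mathcal{F}_{j-1}$, the sequence $\{Y_j\}$ is a martingale difference sequence with respect to $\{\mathcal{F}_j\}$.

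Next I would invoke the (complex-valued) Burkholder inequality for martingale differences, which can be obtained from the standard real-valued version by separating $Y_j = \Re Y_j + i\, \Im Y_j$, applying the real Burkholder inequality to each piece, and recombining. This yields
\[
\E|Q|^q \leq (Cq)^q \left(\E\Big[\sum_{j=2}^{N} \E\big[|Y_j|^2 \,\big|\, \mathcal{F}_{j-1}\big]\Big]^{q/2} + \sum_{j=2}^{N} \E|Y_j|^q\right).
\]
The key structural point is to evaluate the conditional second moment. Expanding $|Y_j|^2 = (x_j A_j + \overline{x_j} B_j)(\overline{x_j} \overline{A_j} + x_j \overline{B_j})$ and using $\E|x_j|^2 = 1$ together with the identity $\E x_j^2 = 0$ (which follows from $\Re x_j$ and $\Im x_j$ being iid with equal variance, so that the cross term $\E x_j^2 \overline{A_j B_j}$ vanishes), one obtains
\[
\E\big[|Y_j|^2 \,\big|\, \mathcal{F}_{j-1}\big] = |A_j|^2 + |B_j|^2.
\]
Applying $(u+v)^{q/2} \leq 2^{q/2}(u^{q/2}+v^{q/2})$ then separates the quadratic variation into the two terms indexed by $a_{jk}$ and $a_{kj}$ on the right-hand side of the lemma.

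For the singular terms, the bound $|Y_j|^q \leq 2^q(|x_j|^q |A_j|^q + |x_j|^q |B_j|^q)$ combined with the independence of $x_j$ from $\mathcal{F}_{j-1}$ gives $\E|Y_j|^q \leq 2^q \mu_{q,N}(\E|A_j|^q + \E|B_j|^q)$, which accounts for the $\mu_{q,N}$ terms in the statement. Summing over $j$ and combining with the quadratic variation bound produces exactly the two-line expression in the lemma.

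The main obstacle I anticipate is the complex-variable bookkeeping: first, making sure the passage from the real-valued Burkholder inequality to the complex-valued form goes through with only a constant loss (so that the prefactors $(C_1 q)^q$ and $(C_2 q)^q$ are of the correct order in $q$), and second, verifying rigorously that the off-diagonal conditional cross term involving $\E x_j^2$ genuinely vanishes, as this is the step that cleanly produces the decoupled $|A_j|^2 + |B_j|^2$ structure and hence the two separate sums on the right-hand side.
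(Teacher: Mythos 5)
Your proposal is correct and follows essentially the same route as the paper: both decompose $Q$ into its lower- and upper-triangular parts, recognize the martingale-difference structure with respect to the filtration generated by $x_1,\dots,x_j$, and invoke a complex Burkholder--Rosenthal inequality obtained from the real one by separating real and imaginary parts, using independence of $x_j$ from the past to factor the moments. The only (cosmetic) difference is that the paper first splits $Q$ into the two triangular sums via the triangle inequality in $L^q$ and applies Burkholder--Rosenthal to each sequence separately, whereas you apply it once to the combined differences $Y_j$ and then decouple the quadratic variation using $\E x_j^2=0$; both yield the stated bound with the same order of constants.
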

\begin{proof}
We introduce the random variables
\beq
\xi_j := x_j \sum_{k=1}^{j-1} a_{jk} x_k \ , \ \widehat{\xi_j} := x_j \sum_{k=1}^{j-1} a_{kj} x_k
\eeq
We let $\mathcal{R}_j := \sigma( \xi_1,...,\xi_j )$ be the sigma-algebra generated by the first $j$ random variables $\xi_1,...,\xi_j$. We observe that the $\xi_j$ and $\widehat{\xi}_j $ are $\mathcal{R}_j-$measurable with $\E [ \xi_j | \mathcal{R}_{j-1} ] = 0$ and $\E [ \widehat{\xi_j} | \mathcal{R}_{j-1} ] = 0,$ which means that they form martingale differences. Next, we write $Q$ as
\beq
Q = \sum \limits_{j=2}^{n} \xi_j + \sum \limits_{j=2}^n \widehat{\xi_j}
\eeq
and so,
$$ \E |Q|^q \leq C^q \E \left | \sum \limits_{j=2}^n \xi_j \right |^q + C^q \E \left | \sum \limits_{j=2}^n \widehat{\xi_j} \right |^q ,$$
We now apply a general Burkholder-Rosenthal Inequality see e.g. \cite{osekowski2012note}, analogous to Lemma 7.2 from \cite{gotze2016optimal},  to the martingale difference sequences $\xi_1,...,\xi_n$ and $\widehat{\xi_1},..., \widehat{\xi_n}.$ We will evaluate $\E[ \xi_j^2 | \mathcal{R}_{j-1} ]$ and $\E |\xi_j |^q ,$ for $j=1,...,n.$ The case is similar for the $\widehat{\xi_1},..., \widehat{\xi_n} $ random variables. We therefore observe that:
\begin{align*}
\E[ |\xi_j|^2 | \mathcal{R}_{j-1} ] &= \E|x_j|^2  \left | \sum \limits_{k=1}^{j-1} a_{jk} x_k \right |^2 = \left | \sum \limits_{k=1}^{j-1} a_{jk} x_k \right |^2 \\
\E | \xi_j |^q &= \E | \zeta_j|^q \E \left | \sum \limits_{k=1}^{j-1} a_{jk} x_k \right |^q \leq \mu_{q,N} \ \E \left | \sum \limits_{k=1}^{j-1} a_{jk} x_k \right |^q,
\end{align*}
and the lemma follows.
\end{proof}

\bibliographystyle{plain}
\bibliography{Local_covariance}

\end{document}